\numberwithin{equation}{section}
\newtheorem{thm}{Theorem}[section]
 \newtheorem{cor}[thm]{Corollary}
 \newtheorem{lem}[thm]{Lemma}
 \newtheorem{prop}[thm]{Proposition}
 \newtheorem{Q}[thm]{Question}
 \newtheorem{introthm}{Theorem}
\newtheorem{introcor}[introthm]{Corollary}
 \theoremstyle{definition}
  \newtheorem{defn}[thm]{Definition}
 \theoremstyle{remark}
 \newtheorem{rem}[thm]{Remark}
\newtheorem*{claim*}{Claim}
\def\NN{\mathbb{N}}
\def\CC{\mathbb{C}}
\def\G{\mathcal{G}}
\def\Gz{\mathcal{G}^{(0)}}
\def\H{\mathcal{H}}
\def\U{\mathcal{U}}
\def\B{\mathscr{B}}
\def\s{\mathrm{s}}
\def\r{\mathrm{r}}
\def\supp{\mathrm{supp}}
\def\Id{\mathrm{Id}}
\def\ker{\mathrm{Ker}}
\def\iso{\mathrm{Iso}}
\def\d{\mathrm{d}}
\def\fix{\mathrm{Fix}}
\def\triv{\mathrm{triv}}
\begin{document}

\title{Tracial states on groupoid $C^*$-algebras and essential freeness}
\author{Kang Li}
\author{Jiawen Zhang}

\address[Kang Li]{Department of Mathematics, Friedrich-Alexander-Universität Erlangen-Nürnberg,  Cauerstraße 11, 91058 Erlangen, Germany.}

\email{kang.li@fau.de}

\address[Jiawen Zhang]{School of Mathematical Sciences, Fudan University, 220 Handan Road, Shanghai, 200433, China.}
\email{jiawenzhang@fudan.edu.cn}

\thanks{JZ was partly supported by National Key R{\&}D Program of China 2022YFA100700.}

\thanks{Keywords: essential freeness, groupoid $C^*$-algebras, invariant measures, (quasidiagonal) traces.}

\begin{abstract}
Let $\G$ be a locally compact Hausdorff \'{e}tale groupoid. We call a tracial state $\tau$ on a general groupoid $C^*$-algebra $C_\nu^*(\G)$ \emph{canonical} if $\tau=\tau|_{C_0(\Gz)} \circ E$, where $E:C^*_\nu(\G) \to C_0(\Gz)$ is the canonical conditional expectation. In this paper, we consider so-called fixed point traces on $C_c(\G)$, and prove that $\G$ is essentially free if and only if any tracial state on $C_\nu^*(\G)$ is canonical and any fixed point trace is extendable to $C_\nu^*(\G)$. 

As applications, we obtain the following: 1) a group action is essentially free if every tracial state on the reduced crossed product is canonical and every isotropy group is amenable; 2) if the groupoid $\G$ is second-countable, amenable and essentially free then every (not necessarily faithful) tracial state on the reduced groupoid $C^*$-algebra is quasidiagonal.
\end{abstract}

\date{\today}

\maketitle

\section{introduction}\label{sec:introduction}

In recent years, there has been increasing interest in the classification of crossed products of $C^*$-algebras $C_0(X)\rtimes_{\nu} \Gamma$ arising from discrete amenable group actions on locally compact Hausdorff spaces $\Gamma \curvearrowright X$ (see, \emph{e.g.}, \cite{MR4534134, CLS21, MR3732883, MR4167017, KN21, MR4066584, LM23, MR3906305, MR4438064,N23,Niu19,MR4315611}). One of the key ingredients in those proofs is that every tracial state $\tau$ on $C_0(X)\rtimes_{\nu} \Gamma$ is \emph{canonical} in the sense that $\tau=\tau|_{C_0(X)} \circ E$, where $E:C_0(X)\rtimes_{\nu} \Gamma \to C_0(X)$ is the canonical conditional expectation. Actually, it was shown in \cite[Theorem~2.7]{KTT90} that every tracial state on the \emph{maximal} crossed product $C(X)\rtimes \Gamma$ of an action on a compact Hausdorff space $X$ is canonical if and only if the action is essentially free with respect to all invariant probability Radon measures on $X$. On the other hand, every tracial state on the \emph{reduced} crossed product $C(X)\rtimes_r \Gamma$ is canonical if and only if the action of the \emph{amenable radical} $R_a(\Gamma)$ of $\Gamma$ (\emph{i.e.}, the largest amenable normal subgroup in $\Gamma$) on $X$ is essentially free with respect to all invariant probability Radon measures on $X$ (see  \cite[Corollary 1.12]{Urs21}).

On the other hand, X. Li was able to show that all classifiable $C^*$-algebras
necessarily arise from twisted \'{e}tale groupoids (see \cite{MR4054809}). Hence, it is natural to consider canonical tracial states on general groupoid $C^*$-algebras $C_{\nu}^*(\G)$ of locally compact Hausdorff \'{e}tale groupoids $\G$. Similarly, a tracial state $\tau$ on $C_{\nu}^*(\G)$ is canonical if $\tau=\tau|_{C_0(\Gz)} \circ E$, where $E:C_{\nu}^*(\G) \to C_0(\Gz)$ is the canonical conditional expectation. If we let $\mu$ be the uniquely associated invariant probability Radon measure on $\Gz$ to $\tau|_{C_0(\Gz)}$, then it follows from \cite[Corollary~1.2]{Nes13} and \cite[Corollary 2.4]{NS22} that if $\G$ is second-countable, then a tracial state $\tau$ on the \emph{maximal} groupoid $C^*$-algebra $C^*(\G)$ is canonical if and only if $\G$ is \emph{essentially free with respect to the associated measure $\mu$} (see Definition~\ref{defn:ess free}). 

In this article, we would like to consider the relationship between the essential freeness of $\G$ and tracial states on a general groupoid $C^*$-algebra $C^*_\nu(\G)$ with respect to any $C^*$-norm $\|\cdot\|_\nu$ dominating the reduced $C^*$-norm. More precisely, we ask the following question:



\begin{Q}\label{Ques:converse}
Let $\G$ be a locally compact Hausdorff and \'{e}tale groupoid and $\|\cdot\|_\nu$ be a $C^*$-norm on $C_c(\G)$ dominating the reduced $C^*$-norm. If $\mu$ is an invariant probability Radon measure on $\Gz$, can we characterise the essential freeness of $\G$ with respect to $\mu$ in terms of tracial states on $C^*_\nu(\G)$?
\end{Q}

One of the crucial ingredients to answer Question \ref{Ques:converse} is the extendibility of the so-called \emph{fixed point trace} $\tau^\fix_\mu$ from $C_c(\G)$ to $C^*_\nu(\G)$ (see Definition~\ref{defn:fix trace}). If $\G=X\rtimes \Gamma$ is a transformation groupoid, then $\tau^\fix_\mu$ has a simplified form (see Equation (\ref{EQ:tau fix group action case})):
 $$\tau^\fix_\mu(\sum_{i=1}^n f_i \gamma_i)= \sum_{i=1}^n \int_{\fix(\gamma_i)} f_i \d \mu,$$ where $f_i \in C_0(X)$ and $\gamma_i \in \Gamma$ for $i =1, \dotsc, n$, and $\fix(\gamma_i)=\{x\in X:\gamma_i x=x\}$ is the set of \emph{fixed points} of $\gamma_i$. The key observation is that $\tau^\fix_\mu$ defined on $C_c(\G)$ is canonical if and only if $\G$ is essentially free with respect to $\mu$ (see Lemma~\ref{lem:tau fix char strong eff.}). 
Using the fixed point trace, we prove the following main result of this paper which answers Question~\ref{Ques:converse}:


\begin{introthm}(Theorem~\ref{thm:char for ess free max case})
Let $\G$ be a locally compact Hausdorff and \'{e}tale groupoid, and $\mu$ be an invariant probability Radon measure on $\Gz$. Then the following are equivalent:
\begin{itemize}
 \item[(1)] $\G$ is essentially free with respect to $\mu$;
 \item[(2)] For any $C^*$-norm $\|\cdot\|_\nu$ on $C_c(\G)$ dominating the reduced $C^*$-norm, any tracial state $\tau$ on $C^*_\nu(\G)$ with the associated measure being $\mu$ is canonical and  $\tau^\fix_\mu$ can be extended to a tracial state on $C^*_\nu(\G)$; 
 \item[(3)] There exists a $C^*$-norm $\|\cdot\|_\nu$ on $C_c(\G)$ dominating the reduced $C^*$-norm such that any tracial state $\tau$ on $C^*_\nu(\G)$ with the associated measure being $\mu$ is canonical and $\tau^\fix_\mu$ can be extended to a tracial state on $C^*_\nu(\G)$; 
 \item[(4)] Any tracial state $\tau$ on $C^*(\G)$ with the associated measure being $\mu$ is canonical.

\end{itemize}
\end{introthm}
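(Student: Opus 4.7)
The strategy is to establish the cycle $(2)\Rightarrow(3)\Rightarrow(1)\Rightarrow(2)$ together with the separate equivalence $(1)\Leftrightarrow(4)$. The implication $(2)\Rightarrow(3)$ is trivial, and $(1)\Rightarrow(4)$ will drop out of $(1)\Rightarrow(2)$ by specialising to the maximal $C^*$-norm, which certainly dominates the reduced one.

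For $(3)\Rightarrow(1)$, let $\|\cdot\|_\nu$ be a $C^*$-norm as in $(3)$ and let $\widetilde{\tau}$ be the tracial extension of $\tau^\fix_\mu$ to $C^*_\nu(\G)$ afforded by the hypothesis. Since $\widetilde{\tau}|_{C_0(\Gz)}$ is precisely the state induced by $\mu$, the canonicalness assumption in $(3)$ forces $\widetilde{\tau}=\mu\circ E$, and restricting to $C_c(\G)$ yields $\tau^\fix_\mu=\mu\circ E|_{C_c(\G)}$. Lemma~\ref{lem:tau fix char strong eff.} then delivers essential freeness of $\G$ with respect to $\mu$. The implication $(4)\Rightarrow(1)$ proceeds along identical lines once one knows that $\tau^\fix_\mu$ extends to a tracial state on the maximal groupoid $C^*$-algebra $C^*(\G)$; this extendability should already be a direct continuity estimate established before the theorem.

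The heart of the theorem is $(1)\Rightarrow(2)$. Assume $\G$ is essentially free with respect to $\mu$. The extension of $\tau^\fix_\mu$ is immediate: Lemma~\ref{lem:tau fix char strong eff.} identifies $\tau^\fix_\mu$ with $\mu\circ E|_{C_c(\G)}$, and $\mu\circ E$ is tautologically a tracial state on $C^*_\nu(\G)$ for every $\nu$ dominating the reduced norm. It then remains to prove that an arbitrary tracial state $\tau$ on $C^*_\nu(\G)$ with associated measure $\mu$ is canonical. By density of $C_c(\G)$ this amounts to showing $\tau(f)=0$ whenever $f\in C_c(\G)$ vanishes on $\Gz$. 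Using a partition of unity subordinate to a cover by open bisections, one further reduces to $f$ supported in a single open bisection $B$ with $\overline{B}\cap\Gz=\emptyset$. Essential freeness then guarantees that the set of $x\in\r(B)$ at which the unique element of $B$ with range $x$ lands in the isotropy bundle has $\mu$-measure zero, and cutting by elements of $C_0(\Gz)$ supported on the complementary free locus and moving them across $f$ via the trace identity forces $|\tau(f)|$ to be arbitrarily small.

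The main obstacle is precisely this vanishing step. Because $\nu$ is only required to dominate the reduced norm, and no second-countability assumption is made on $\G$, neither a concrete regular representation nor a Neshveyev-type disintegration of $\tau$ over the isotropy bundle is available. The argument must therefore rely only on the trace identity, the canonical conditional expectation, and the set-theoretic essential freeness of the action of bisections on $\Gz$; adapting the Kawamura--Tomiyama and Ursu approximation technique to this groupoid setting, uniformly in $\nu$, is the core technical work.
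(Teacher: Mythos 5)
Your proposal is correct and follows essentially the same route as the paper: the same cycle of implications, the same use of Lemma~\ref{lem:tau fix char strong eff.} together with the prior extendability of $\tau^\fix_\mu$ to $C^*(\G)$ (Corollary~\ref{cor:tau fix extended to Cmax}, obtained there via a GNS construction rather than a norm estimate), and the same cut-off-and-move-across-the-trace argument for the vanishing step, which the paper isolates beforehand as Proposition~\ref{prop:ess. free implies unique trace}. No substantive difference or gap.
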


On \emph{reduced} crossed products of $C^*$-algebras, the extendability of $\tau^\fix_\mu$ can be reformulated using a recent result in \cite{CN22} (see Proposition~\ref{prop:char for tau fix extension}) as follows:

\begin{introcor}(Corollary~\ref{cor:char for tau fix extension group action case})\label{cor:B}
Let $\Gamma$ be a discrete group acting on a locally compact Hausdorff space $X$ with an invariant probability Radon measure $\mu$. We consider the following conditions:
\begin{enumerate}
 \item The action is essentially free with respect to $\mu$;
 \item Any tracial state $\tau$ on $C_0(X) \rtimes_r \Gamma$ with the associated measure being $\mu$ is canonical;
 \item The isotropy group $\Gamma_x$ is amenable for $\mu$-almost every $x\in X$.
\end{enumerate}
Then (1) $\Rightarrow$ (2)\footnote{Note that the implication ``(2) $\Rightarrow$ (1)'' in Corollary~\ref{cor:B} does not hold in general (see Remark~\ref{rem:C*-simple}).} and (2) + (3) $\Rightarrow$ (1). 

If additionally $\Gamma$ is countable and $X$ is second-countable, then (1) $\Leftrightarrow$ (2) + (3).

\end{introcor}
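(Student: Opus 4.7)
The plan is to apply the main theorem (Theorem~A) to the transformation groupoid $\G := X \rtimes \Gamma$, under the identification $C^*_r(\G) \cong C_0(X) \rtimes_r \Gamma$, with the reduced $C^*$-norm playing the role of $\|\cdot\|_\nu$. Essential freeness of the action with respect to $\mu$ translates directly into essential freeness of $\G$ with respect to $\mu$, so one can transport all four conditions of Theorem~A back and forth between the groupoid and the group-action language.

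For the implication (1) $\Rightarrow$ (2), I would simply invoke (1) $\Rightarrow$ (3) of Theorem~A with $\nu = r$: essential freeness forces every tracial state on $C^*_r(\G)$ with associated measure $\mu$ to be canonical, which is exactly (2). The extendibility clause in Theorem~A(3) is not needed here.

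For the implication (2) $+$ (3) $\Rightarrow$ (1), the strategy is to verify condition (3) of Theorem~A with $\nu = r$, whence (1) follows. One half, the canonicity of all such traces, is hypothesis (2). For the other half, I would invoke Proposition~\ref{prop:char for tau fix extension}, which uses the main result of \cite{CN22} to reformulate extendibility of $\tau^\fix_\mu$ to the reduced crossed product in terms of $\mu$-a.e.\ amenability of the isotropy groups $\Gamma_x$; this is precisely hypothesis (3). Assembling these two pieces produces Theorem~A(3) for $\nu = r$, from which (1) is automatic.

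For the final equivalence (1) $\Leftrightarrow$ (2) $+$ (3) under the countability hypotheses, since (1) $\Rightarrow$ (2) and (2) $+$ (3) $\Rightarrow$ (1) have already been handled, it remains to check (1) $\Rightarrow$ (3). When $\Gamma$ is countable, $\{x \in X : \Gamma_x \neq \{e\}\} = \bigcup_{\gamma \neq e} \fix(\gamma)$ is a Borel set, and essential freeness forces it to be $\mu$-null; thus $\Gamma_x = \{e\}$ (and \emph{a fortiori} amenable) for $\mu$-a.e.\ $x$, giving (3). The main obstacle in this plan is the careful invocation of Proposition~\ref{prop:char for tau fix extension}: one must ensure that the CN22-style characterisation of trace extension applies at the level of reduced crossed products and matches the amenable-isotropy hypothesis (3) \emph{exactly}, rather than some a priori stronger or weaker condition; modulo that, the proof is a straightforward assembly of Theorem~A with the proposition.
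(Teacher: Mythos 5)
Your overall architecture (reduce to the transformation groupoid, apply Theorem~A with $\nu=r$, and supply the extendibility of $\tau^{\fix}_\mu$ from the amenable-isotropy hypothesis) matches the paper's, and your proof of $(1)\Rightarrow(2)$ and your direct argument that essential freeness plus countability of $\Gamma$ forces $\Gamma_x=\{e\}$ for $\mu$-a.e.\ $x$ (hence $(1)\Rightarrow(3)$) are both fine --- the latter is in fact a cleaner route to $(3)$ than the paper's, which passes back through the extendibility of $\tau^{\fix}_\mu$.

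The gap is in $(2)+(3)\Rightarrow(1)$. You obtain the extendibility of $\tau^{\fix}_\mu$ from hypothesis $(3)$ by invoking Proposition~\ref{prop:char for tau fix extension}, but that proposition is only stated (and only proved, since it rests on \cite[Proposition 3.1]{CN22} via Renault's disintegration theorem) for \emph{second countable} groupoids, i.e.\ it requires $\Gamma$ countable and $X$ second countable. The implication $(2)+(3)\Rightarrow(1)$ in the corollary carries no such hypothesis, so your argument proves it only in the separable setting. The point you flag as ``the main obstacle'' is also left unresolved: Proposition~\ref{prop:char for tau fix extension} characterises extendibility via the exotic norm $\|\cdot\|_e$ on $C_c(\G^x_x)$, and one still needs \cite[Proposition 2.10]{CN22} (that $\|\cdot\|_e=\|\cdot\|_r$ on isotropy groups of transformation groupoids) plus the standard fact that the trivial representation extends to $C^*_r(\Gamma_x)$ iff $\Gamma_x$ is amenable, before the condition becomes exactly $(3)$. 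The paper avoids both issues for this direction: it only needs ``amenable isotropy $\Rightarrow$ $\tau^{\fix}_\mu$ extends'', and proves it directly (Lemma~\ref{lem:char for tau fix extension group action case}) from the decomposition $\tau^{\fix}_\mu(f)=\int_{\Gz}\tau^{\triv}_x(\eta_x(f))\,\d\mu(x)$ of Lemma~\ref{lem:measurable fields for tau fix} --- which, as Remark~\ref{rem:2nd countable} notes, needs no second countability --- by extending each $\tau^{\triv}_x$ to a state on $C^*_r(\Gamma_x)$ using amenability and then defining the extension by $a\mapsto\int_{\Gz}\tau_{x,r}(\vartheta_{x,r}(a))\,\d\mu(x)$ as in Proposition~\ref{prop:extend trace dec to general elements}. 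The second-countability-dependent machinery of \cite{CN22} is reserved for the converse direction $(1)\Rightarrow(3)$, which is exactly where the extra hypotheses appear in the statement. To repair your proof, replace the appeal to Proposition~\ref{prop:char for tau fix extension} in the $(2)+(3)\Rightarrow(1)$ step by this direct construction.
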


We end this paper with the following result about quasidiagonal traces, which play a crucial role in the classification of simple nuclear $C^*$-algebras (see \cite{MR3966830}):

\begin{introcor}(Corollary~\ref{cor:amentrace}\footnote{We also refer the reader to Theorem~\ref{quasiThm} for a more general result.})
Let $\G$ be a locally compact, Hausdorff, second-countable, amenable and étale groupoid, which is also essentially free. Then every (not necessarily faithful) tracial state on $C_r^*(\G)$ is quasidiagonal.
\end{introcor}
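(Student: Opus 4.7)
The plan is to reduce to the case of a faithful trace by restricting to the support of the associated measure, and then apply the Tikuisis--White--Winter theorem on quasidiagonality of faithful traces.

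Let $\tau$ be a tracial state on $C^*_r(\G)$ and let $\mu$ be its associated invariant probability Radon measure on $\Gz$. Since $\G$ is essentially free (in particular, with respect to $\mu$), Theorem~\ref{thm:char for ess free max case} yields $\tau = \mu \circ E$, where $E : C^*_r(\G) \to C_0(\Gz)$ is the canonical conditional expectation. Set $Y := \supp(\mu)$, a closed $\G$-invariant subset of $\Gz$, and put $U := \Gz \setminus Y$, which is open and $\G$-invariant. Consider the restriction subgroupoids $\G_U := \G|_U$ and $\G_Y := \G|_Y$; both inherit the hypotheses of being locally compact, Hausdorff, second countable, étale, amenable, and essentially free.

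Amenability of $\G$ yields the short exact sequence
$$0 \longrightarrow C^*_r(\G_U) \longrightarrow C^*_r(\G) \stackrel{q}{\longrightarrow} C^*_r(\G_Y) \longrightarrow 0.$$
Since $E$ maps $C^*_r(\G_U)$ into $C_0(U)$, which is $\mu$-null, the trace $\tau$ vanishes on $C^*_r(\G_U)$ and therefore descends to a tracial state $\bar\tau$ on $C^*_r(\G_Y)$ with $\bar\tau \circ q = \tau$. Moreover $\bar\tau = \mu \circ E_Y$, where $E_Y$ denotes the canonical conditional expectation for $\G_Y$. Because $E_Y$ is faithful (the classical fact for Hausdorff étale groupoid $C^*$-algebras) and $\mu$ has full support on $Y$, the trace $\bar\tau$ is faithful.

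Now $C^*_r(\G_Y)$ is separable (from second countability), nuclear (from amenability of $\G_Y$), and satisfies the UCT (by Tu's theorem for amenable second countable Hausdorff étale groupoids). Hence the Tikuisis--White--Winter theorem applies, so $\bar\tau$ is quasidiagonal on $C^*_r(\G_Y)$. Finally, if $(\psi_n : C^*_r(\G_Y) \to M_{k(n)}(\CC))_n$ is a sequence of c.c.p.\ maps witnessing quasidiagonality of $\bar\tau$, then $(\psi_n \circ q)_n$ witnesses quasidiagonality of $\tau$ on $C^*_r(\G)$, as desired.

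The main technical obstacle is justifying the short exact sequence above and the identification $C^*_r(\G)/C^*_r(\G_U) \cong C^*_r(\G_Y)$ for the reduced $C^*$-algebras; this crucially uses amenability (or at least exactness) of $\G$. A minor subtlety is that $C^*_r(\G)$ need not be unital when $\Gz$ is non-compact, but both the Tikuisis--White--Winter theorem and the lifting step go through in the non-unital setting using c.c.p.\ maps.
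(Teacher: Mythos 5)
Your argument is correct and follows essentially the same route as the paper: both reduce to the quotient $C^*_r(\G_{\supp \mu})$ (your kernel $C^*_r(\G_U)$ is exactly the tracial ideal $I_\tau$ appearing in Proposition~\ref{prop:exact sequence} and Corollary~\ref{quotientgrou}), observe that the induced trace there is faithful because $\mu$ has full support, and then apply the Tikuisis--White--Winter theorem. The only cosmetic difference is that you prove the amenable case directly, citing Tu's theorem for the UCT, rather than deducing it from the more general Theorem~\ref{quasiThm}, which reaches the UCT via the strong Baum--Connes conjecture of \cite{BP23}.
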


\section{Preliminaries}\label{sec:Preliminaries}





Given a locally compact Hausdorff space $X$, we denote by $C(X)$ the set of complex-valued continuous functions on $X$. Recall that the \textit{support} of a function $f\in C(X)$ is the closure of $\{ x\in X: f(x)\neq 0\}$, written as $\supp f$. 
Denote by $C_c(X)$ the set of complex-valued continuous functions with compact support, and by $C_0(X)$ the set of complex-valued continuous functions vanishing at infinity, which is the closure of $C_c(X)$ with respect to the supremum norm $\|f\|_\infty:=\sup\{|f(x)|: x\in X\}$.


\subsection{Basic notions for groupoids}\label{ssec:groupoid}

Let us start with some basic notions and terminologies about groupoids. For details we refer the reader to \cite{Ren80,MR4321941}.

Recall that a \emph{groupoid} is a small category, in which every morphism is invertible. Roughly speaking, a groupoid consists of a set $\G$, a subset $\Gz$ called the \emph{unit space}, two maps $\s, \r: \G \to \Gz$ called the \emph{source} and \emph{range} maps respectively, a \emph{composition law}:
\[
    \G^{(2)}\coloneqq \{(\gamma_1,\gamma_2) \in \G \times \G: \s(\gamma_1)=\r(\gamma_2)\}\ni(\gamma_1,\gamma_2) \mapsto \gamma_1\gamma_2 \in \G,
\]
and an \emph{inverse} map on $\G$ given by $\gamma \mapsto \gamma^{-1}$. These operations satisfy a couple of axioms, including the associativity law and the fact that elements in $\Gz$ act as units. 
For $x \in \Gz$, we define $\G^x:=\r^{-1}(x)$ and $\G_x:=\s^{-1}(x)$. Moreover, $\G^{x}_{x} = \G^{x}\cap\G_{x}$ is called the \emph{isotropy group} at $x\in \Gz$. A subset $Y \subseteq \Gz$ is called \emph{invariant} if $\r^{-1}(Y)=\s^{-1}(Y)$, and we define $\G_Y:=\s^{-1}(Y)$. 
For $A, B \subseteq \G$, we define
\begin{align*}
    A^{-1} & \coloneqq \{ \gamma^{-1} \in\G: \gamma \in A \};                                                                                                  \\
    AB     & \coloneqq \{ \gamma\in\G: \gamma=\gamma_{1}\gamma_{2}\text{ where }\gamma_{1} \in A,\gamma_{2}\in B \text{ and } \s(\gamma_1) = \r(\gamma_2) \}.
\end{align*}

A \textit{locally compact Hausdorff groupoid} is a groupoid $\G$ endowed with a locally compact and Hausdorff topology for which the composition, inversion, source and range maps are continuous with respect to the induced topologies.

We say that a locally compact Hausdorff groupoid $\G$ is \textit{\'{e}tale} if the range (and hence the source) map is a local homeomorphism, \emph{i.e.}, for any $\gamma \in \G$ there exists an open neighbourhood $U$ of $\gamma$ such that $\r(U)$ is open and $\r|_U$ is a homeomorphism. In this case, the fibers $\G_x$ and $\G^x$ with the induced topologies are discrete for each $x\in \Gz$, and $\Gz$ is clopen in $\G$.

 A subset $A$ in an \'{e}tale groupoid $\G$ is called a \textit{bisection} if the restrictions of $\s, \r$ to $A$ are homeomorphisms onto their respective images. It follows from definitions that all open bisections form a basis for the topology of $\G$. As a direct consequence, any function $f \in C_c(\G)$ can be written as a linear combination of continuous functions whose supports are contained in pre-compact open bisections.


We record the following known result (see, \emph{e.g.}, \cite[Lemma 8.4.11]{MR4321941}). For convenience of the reader, we provide here a short proof.
\begin{lem}\label{lem:multiplication of bisection}
Let $\G$ be a locally compact Hausdorff and \'{e}tale groupoid. Then the multiplication map $\G^{(2)} \to \G$ is open.
\end{lem}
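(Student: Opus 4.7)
The plan is to reduce to showing that $UV$ is open in $\G$ whenever $U$ and $V$ are open bisections, and then build an explicit open neighbourhood around each point of $UV$ using the local homeomorphism property of $\r$. Since $\G$ is étale, open bisections form a basis for the topology of $\G$, so sets of the form $(U \times V) \cap \G^{(2)}$ with $U, V$ open bisections form a basis for $\G^{(2)}$. Openness of the multiplication map $m \colon \G^{(2)} \to \G$ can be checked on a basis, so it suffices to show that $m((U \times V) \cap \G^{(2)}) = UV$ is open in $\G$ for all open bisections $U, V$.

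To verify this, I would fix $\gamma_0 = \alpha_0 \beta_0 \in UV$ with $\alpha_0 \in U$ and $\beta_0 \in V$, and parameterize nearby points by their range. Concretely, choose an open bisection $W$ containing $\gamma_0$ with $\r(W) \subseteq \r(U)$; such a $W$ exists since $\r(U) \ni \r(\gamma_0)$ is open. For $x \in \r(W)$, set $\alpha_x := (\r|_U)^{-1}(x)$ and $\gamma_x := (\r|_W)^{-1}(x)$, and consider the continuous map
$$f \colon \r(W) \to \G, \qquad f(x) := \alpha_x^{-1}\, \gamma_x.$$
A short computation using $\alpha_0^{-1} \alpha_0 = \s(\alpha_0) = \r(\beta_0)$ gives $f(\r(\gamma_0)) = \beta_0 \in V$, so by continuity of $f$ and openness of $V$ there is an open $A \subseteq \r(W)$ containing $\r(\gamma_0)$ with $f(A) \subseteq V$. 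The set $W_0 := (\r|_W)^{-1}(A)$ is then an open neighbourhood of $\gamma_0$ in $\G$, and each $\gamma \in W_0$ with $x := \r(\gamma)$ satisfies $\gamma = \gamma_x = \alpha_x \cdot f(x) \in UV$, so $W_0 \subseteq UV$.

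The main point to watch is that continuity of $m$ by itself is not enough: openness requires actively lifting points of a neighbourhood of $\r(\gamma_0)$ back into $\G^{(2)}$. The étale hypothesis enters precisely here, via the continuous local inverses $(\r|_U)^{-1}$ and $(\r|_W)^{-1}$, which let us define $f$ and thereby parameterize an entire open neighbourhood of $\gamma_0$ by composable pairs in $U \times V$. The remaining verifications ($UV$ being a set-theoretic bisection, and the continuity claims for the maps involved) are routine groupoid bookkeeping using that bisections support continuous inverses for $\s$ and $\r$.
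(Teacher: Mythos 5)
Your proof is correct and follows essentially the same route as the paper's: both reduce to showing $UV$ is open for open bisections $U,V$, and both exhibit an open neighbourhood $W_0$ of a product $\alpha_0\beta_0$ inside $UV$ by peeling off the $U$-factor via the local inverse of $\r|_U$ and using continuity to keep the second factor $\alpha_x^{-1}\gamma$ inside $V$. The only difference is presentational: you package the continuity argument as a one-variable map $f(x)=\alpha_x^{-1}\gamma_x$ parameterized by the range, whereas the paper invokes joint continuity of multiplication at the point $(\alpha^{-1},\alpha\beta)$ directly.
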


\begin{proof}
Given two open subsets $U$ and $V$ of $\G$, we need to show that $UV$ is also open. Without loss of generality, we can assume that $U$ and $V$ are open bisections with $\s(U) = \r(V)$. Fix $\alpha \in U$ and $\beta \in V$ with $\s(\alpha) = \r(\beta)$. Since the multiplication map is continuous at $(\alpha^{-1}, \alpha \beta)$, there exists an open neighbourhood $U_1 \subseteq U$ of $\alpha$ and an open bisection $W \subseteq \r^{-1}(\r(U_1))$ containing $\alpha \beta$ such that $U_1^{-1}W \subseteq V$. As $U_1 \subseteq U$ is also a bisection, we obtain that $W \subseteq U V$. 
\end{proof}

\noindent\textbf{Convention: Throughout the paper, we always assume that $\G$ is a locally compact Hausdorff and \'{e}tale groupoid.}

\subsection{Groupoid $C^*$-algebras}\label{ssec:groupoid C*-algebra}

Let us now recall different constructions of groupoid $C^*$-algebras and their basic properties. Given a locally compact Hausdorff and \'{e}tale groupoid $\G$, the space $C_c(\G)$ can be turned into a $*$-algebra with the following operations: given $f,g\in C_c(\G)$, we define their \emph{convolution} and \emph{involution} by
\begin{align}
    (f*g)(\gamma) & := \sum_{\alpha\in \G_{s(\gamma)}}f(\gamma\alpha^{-1})g(\alpha), \label{EQ:convolution} \\
    f^*(\gamma)   & := \overline{f(\gamma^{-1})}. \label{EQ:star}
\end{align}
To tell the difference, we denote the point-wise product by $f \cdot g$.

Recall that for each $x\in\Gz$, the \emph{left regular representation at $x$}, denoted by $\lambda_x: C_c(\G) \to \B(\ell^2(\G_x))$, is defined as follows:
\begin{equation}\label{EQ:reduced algebra defn}
    \left(\lambda_x(f)\xi\right)(\gamma)=\sum_{\alpha\in \G_x}f(\gamma\alpha^{-1})\xi(\alpha), \quad \text{where $\gamma\in \G_x$, } f\in C_{c}(\G) \text{ and }\xi\in\ell^{2}(\G_{x}).
\end{equation}
It is routine to check that $\lambda_x$ is a well-defined $\ast$-homomorphism.
The \textit{reduced $C^*$-norm} on $C_{c}(\G)$ is defined by
\[
    \| f \|_{r}\coloneqq \sup_{x\in \Gz} \| \lambda_{x}(f) \|,
\]
and the \textit{reduced groupoid $C^{*}$-algebra} $C^{*}_r(\G)$ is defined to be the completion of the $*$-algebra $C_{c}(\G)$ with respect to the reduced $C^*$-norm $\| \cdot \|_{r}$. 
It is clear that each left regular representation $\lambda_x$ can be extended automatically to a $*$-homomorphism $\lambda_x: C^*_r(\G) \to \B(\ell^2(\G_x))$.

We also consider the following norm on $C_c(\G)$ defined by:
\[
\|f\|_I:=\max\Big\{\sup_{x \in \Gz} \sum_{\gamma \in \G_x} |f(\gamma)|, \sup_{x \in \Gz} \sum_{\gamma \in \G_x} |f^*(\gamma)|\Big\}.
\]
The completion of $C_c(\G)$ with respect to the norm $\|\cdot\|_I$ is denoted by $L^1(\G)$. Recall that the \emph{maximal groupoid $C^*$-algebra} $C^*(\G)$ is defined to be the completion of $C_c(\G)$ with respect to the $C^*$-norm:
$$\|f\|_{\max}:=\sup \|\pi(f)\|,$$
where the supremum is taken over all bounded $\ast$-representations $\pi$ of $L^1(\G)$. It is clear that there is a surjective $\ast$-homomorphism
\begin{equation*}
q_{\max}: C^*(\G) \longrightarrow C^*_r(\G),
\end{equation*}
which is the identity on $C_c(\G)$. We say that $\G$ has \emph{the weak containment property} if $q_{\max}$ is an isomorphism.

We will also consider other $C^*$-norms between the reduced and the maximal ones. More precisely, we say that a $C^*$-norm $\|\cdot\|_\nu$ \emph{dominates the reduced $C^*$-norm} if $\|f\|_\nu \geq \|f\|_r$ for all $f \in C_c(\G)$. It is worth noticing that $\|\cdot\|_\nu \leq \|\cdot\|_{\max}$ always holds. We denote the $C^*$-completion of $C_c(\G)$ with respect to $\|\cdot\|_\nu$ by $C^*_\nu(\G)$, called a \emph{groupoid $C^*$-algebra} of $\G$. Similarly, we have a surjective $\ast$-homomorphism
\begin{equation}\label{EQ:canonical quotient map general}
q_{\nu}: C^*_\nu(\G) \longrightarrow C^*_r(\G),
\end{equation}
which is the identity on $C_c(\G)$.

We remark that there is an inclusion map $\iota_0: C_c(\Gz) \to C_c(\G)$ given by extending functions by zero on $\G \setminus \Gz$, and it was recorded in \cite[Section 2.2]{BL20} that $\iota_0$ can be extended to an isometric $\ast$-homomorphism $\iota: C_0(\Gz) \hookrightarrow C^*_r(\G)$, where the norm on $C_0(\Gz)$ is the supremum norm. The same fact holds for any $C^*$-norm $\|\cdot\|_\nu$ dominating the reduced $C^*$-norm. Hence, we will in what follows regard $C_0(\Gz)$ as a $C^*$-subalgebra in $C^*_\nu(\G)$ without further explanation.

%
%

From \cite[Proposition II.4.2]{Ren80} (see also \cite[Section 2.2]{BCS22}) we have that any element of $C^*_r(\G)$ can be regarded as a $C_0$-function on the groupoid $\G$. Indeed, there exists a linear and contractive map $j: C^*_r(\G) \to C_0(\G)$ given by
\[
j(a)(\gamma):=\left\langle \lambda_{\s(\gamma)}(a)\delta_{\s(\gamma)}, \delta_{\gamma} \right\rangle_{\ell^2(\G_{\s(\gamma)})}
\]
for $a\in C^*_r(\G)$ and $\gamma \in \G$. On $C_c(\G) \cup C_0(\Gz)$ the map $j$ is nothing but the identity map. 
The reduced groupoid $C^*$-algebra also admits a faithful conditional expectation $E:C^*_r(\G) \to C_0(\Gz)$ defined by
\begin{equation}\label{EQ:expectation}
E(a)(u):=\left\langle \lambda_{u}(a)\delta_u, \delta_u \right\rangle_{\ell^2(\G_u)}
\end{equation}
for $a\in C^*_r(\G)$ and $u\in \Gz$ (see, \emph{e.g.}, \cite[Section 2.2]{BL20}). Intuitively, $E$ is given by restriction of functions in the sense that $j(E(a)) = j(a)|_{\Gz}$ for all $a\in C^*_r(\G)$. Hence, it follows that $E \circ \iota = \Id_{C_0(\Gz)}$. For any $C^*$-norm $\|\cdot\|_\nu$ dominating the reduced $C^*$-norm, we can compose $E$ with $q_\nu$ and obtain a conditional expectation $E \circ q_\nu: C^*_\nu(\G) \to C_0(\Gz)$ on $C^*_\nu(\G)$.

We end this subsection with an elementary fact, and leave its proof (which is a relatively straightforward computation) to the reader.

\begin{lem}\label{lem:easy calculation 0}
For $a\in C^*_r(\G)$, $f,g \in C_0(\Gz)$ and $\gamma \in \G$, we have 
\[
j(f \ast a \ast g)(\gamma) = f(\r(\gamma)) \cdot j(a)(\gamma) \cdot g(\s(\gamma)).
\] 
\end{lem}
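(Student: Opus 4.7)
The plan is to unwind the definition of $j$ directly in terms of the left regular representation and to exploit the fact that elements of $C_0(\Gz)$ act on each $\ell^2(\G_x)$ by pointwise multiplication.

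The first step I would carry out is a short computational observation: for any $h\in C_0(\Gz)$, $x\in\Gz$, and $\alpha\in\G_x$, one has $\lambda_x(h)\delta_\alpha = h(\r(\alpha))\delta_\alpha$. This follows immediately from the convolution formula \eqref{EQ:reduced algebra defn}, since $h$ vanishes off $\Gz$ forces $\beta\alpha^{-1}\in\Gz$, i.e.\ $\beta=\alpha$, leaving only the diagonal term $h(\r(\alpha))$. In particular, $\lambda_x(h)$ is self-adjoint up to conjugation (with $\lambda_x(h)^*=\lambda_x(\overline{h})$ in the obvious sense).

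Next, using that $\lambda_{\s(\gamma)}$ is a $\ast$-homomorphism on $C^*_r(\G)$, I would write
\[
j(f\ast a\ast g)(\gamma)=\bigl\langle \lambda_{\s(\gamma)}(f)\,\lambda_{\s(\gamma)}(a)\,\lambda_{\s(\gamma)}(g)\,\delta_{\s(\gamma)},\,\delta_{\gamma}\bigr\rangle_{\ell^2(\G_{\s(\gamma)})}.
\]
Applying the previous step to $g$ at the vector $\delta_{\s(\gamma)}$ gives $\lambda_{\s(\gamma)}(g)\delta_{\s(\gamma)}=g(\s(\gamma))\,\delta_{\s(\gamma)}$, which pulls the scalar $g(\s(\gamma))$ out of the inner product. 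Shifting $\lambda_{\s(\gamma)}(f)$ to the right-hand side of the inner product as $\lambda_{\s(\gamma)}(f)^*=\lambda_{\s(\gamma)}(\overline{f\,})$ and applying the observation at $\delta_\gamma$ gives $\lambda_{\s(\gamma)}(\overline{f\,})\delta_\gamma=\overline{f(\r(\gamma))}\,\delta_\gamma$, which pulls the scalar $f(\r(\gamma))$ out as well. What remains is exactly $f(\r(\gamma))\,g(\s(\gamma))\,\langle\lambda_{\s(\gamma)}(a)\delta_{\s(\gamma)},\delta_\gamma\rangle = f(\r(\gamma))\cdot j(a)(\gamma)\cdot g(\s(\gamma))$, yielding the claim.

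The argument above works verbatim for every $a\in C^*_r(\G)$ because $\lambda_{\s(\gamma)}$ and $j$ are continuous and defined on all of $C^*_r(\G)$; alternatively one may first check the identity for $a\in C_c(\G)$ by the same computation and then extend by density using the fact that $a\mapsto f\ast a\ast g$ is norm-continuous on $C^*_r(\G)$. There is no substantial obstacle here: the only subtlety is keeping track of $\r$ versus $\s$ when pushing $f$ across the inner product via the adjoint, which is precisely why $f$ ends up evaluated at $\r(\gamma)$ while $g$ is evaluated at $\s(\gamma)$.
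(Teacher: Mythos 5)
Your proof is correct: the pointwise-multiplication identity $\lambda_x(h)\delta_\alpha = h(\r(\alpha))\delta_\alpha$ for $h\in C_0(\Gz)$ is verified correctly from the convolution formula, and moving $\lambda_{\s(\gamma)}(f)$ across the inner product via $\lambda_{\s(\gamma)}(f)^*=\lambda_{\s(\gamma)}(\overline{f})$ correctly produces the factor $f(\r(\gamma))$ (the two complex conjugations cancel). The paper explicitly omits the proof as a ``direct calculation,'' and your argument is exactly the direct calculation the authors have in mind, including the standard density remark needed to pass from $C_c(\G)$ to general $a\in C^*_r(\G)$.
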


\subsection{Tracial states}\label{ssec:tracial states}

Let $\G$ be a locally compact Hausdorff and \'{e}tale groupoid, and $\|\cdot\|_\nu$ be a $C^*$-norm on $C_c(\G)$ dominating the reduced $C^*$-norm. 

\begin{defn}\label{defn:tr st}
A \emph{tracial state} on the groupoid $C^*$-algebra $C^*_\nu(\G)$ is a state $\tau: C^*_\nu(\G) \to \CC$ satisfying $\tau(ab) = \tau(ba)$ for any $a,b \in C^*_\nu(\G)$. 
\end{defn}

If $\tau$ is a tracial state on the groupoid $C^*$-algebra $C^*_\nu(\G)$, then $\tau|_{C_0(\Gz)}$ is a state on $C_0(\Gz)$, which corresponds to a (positive) probability Radon measure $\mu$ on $\Gz$ according to the Riesz representation theorem. In other words, we have
\begin{equation}\label{EQ:canonical trace}
\tau(f) = \int_{\Gz} f \d\mu \quad \text{for any} \quad f \in C_0(\Gz).
\end{equation}
We call $\mu$ the \emph{measure associated} to $\tau$ and we also denote this measure by $\mu_\tau$. It is actually invariant in the following sense (see, \emph{e.g.}, \cite[Lemma 4.1]{LR19}): 

Given a bisection $B \subseteq \G$, we consider the homeomorphism 
\begin{equation}\label{EQ:alpha B}
\alpha_B:\s(B) \to \r(B) \quad \text{given  by} \quad x \mapsto \r((\s|_B)^{-1}(x)) \quad \text{for} \quad x \in \s(B).
\end{equation}
A Borel measure $\mu$ on $\Gz$ is called \emph{invariant} (cf. \cite[Definition I.3.12]{Ren80}) if for any open bisection $B$ in $\G$, we have $\mu|_{\r(B)} = (\alpha_B)_\ast(\mu|_{\s(B)})$. For an invariant measure $\mu$ on $\Gz$, its support $\supp\mu$ is an invariant subset of $\Gz$. We also define
\begin{equation}\label{EQ:fixed points}
\fix (\alpha_B) :=\{x\in \s(B): \alpha_B(x)=x\},
\end{equation}
which is an intersection of an open set and a closed set (hence measurable) if $B$ is an open bisection and $\G$ is étale.

Conversely, let $\mu$ be an invariant probability Radon measure on $\Gz$. Then it follows from \cite[Lemma 4.2]{LR19} (see also \cite[Proposition II.5.4]{Ren80}) that 
\begin{equation}\label{EQ:tau mu}
\tau_\mu: a \mapsto \int_{\Gz} E(a) \d\mu \quad \text{for} \quad a \in C^*_r(\G)
\end{equation}
is a tracial state on $C^*_r(\G)$, called the \emph{tracial state associated to $\mu$}. Similarly, we can also consider the tracial state associated to $\mu$ on $C^*_\nu(\G)$ (with the same notation)
\begin{equation}\label{EQ:tau mu max}
\tau_\mu: a \mapsto \int_{\Gz} E(q_\nu(a)) \d\mu \quad \text{for} \quad a \in C^*_\nu(\G),
\end{equation}
where $q_\nu: C^*_\nu(\G) \longrightarrow C^*_r(\G)$ is the canonical quotient map mentioned in (\ref{EQ:canonical quotient map general}).

\section{canonical tracial states and essential freeness}\label{sec:trace}




In this section, we study the essential freeness of \'{e}tale groupoids via their canonical tracial states. Let us start with the following definition:

\begin{defn}\label{defn:standard tracial states}
Let $\G$ be a locally compact Hausdorff and \'{e}tale groupoid, and $\|\cdot\|_\nu$ be a $C^*$-norm on $C_c(\G)$ dominating the reduced $C^*$-norm. A tracial state $\tau$ on $C^*_\nu(\G)$ is called \emph{canonical} if $\tau=\tau_\mu$, where $\mu$ is the measure associated to $\tau$ on $\Gz$. 
\end{defn}

The following is elementary but useful in what follows:

\begin{lem}\label{lem:char for standard}

A tracial state $\tau$ on $C^*_\nu(\G)$ is canonical \emph{if and only if} $\tau(f) = 0$ for all $f \in C_c(\G \setminus \Gz)$. 
\end{lem}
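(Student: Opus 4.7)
The plan is to use the decomposition of $C_c(\G)$ induced by the fact that $\Gz$ is clopen in $\G$ (this is where étaleness enters), together with the basic identity $E(q_\nu(f))|_{\Gz}=f|_{\Gz}$ for $f\in C_c(\G)$.

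For the forward implication, suppose $\tau=\tau_\mu$. Given $f\in C_c(\G\setminus\Gz)$, viewed as an element of $C_c(\G)$ by extension by zero, I first note that because $j$ is the identity on $C_c(\G)$ we have $j(q_\nu(f))=f$, so $E(q_\nu(f))(u)=f(u)=0$ for every $u\in\Gz$. Then by definition \eqref{EQ:tau mu max},
\[
\tau(f)=\tau_\mu(f)=\int_{\Gz}E(q_\nu(f))\,\d\mu=0.
\]

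For the converse, the plan is to show $\tau$ and $\tau_\mu$ agree on the dense subalgebra $C_c(\G)$. Pick $f\in C_c(\G)$. Since $\G$ is étale, $\Gz$ is clopen in $\G$, and hence $\G\setminus\Gz$ is also clopen; consequently $f|_{\Gz}\in C_c(\Gz)$ and the function $g:=f-\iota_0(f|_{\Gz})$ is continuous on $\G$, vanishes on $\Gz$, and has compact support. Because $\Gz$ is open, no point of $\Gz$ is a limit point of $\G\setminus\Gz$, so in fact $\supp g\subseteq\G\setminus\Gz$, giving $g\in C_c(\G\setminus\Gz)$. By hypothesis $\tau(g)=0$, whence $\tau(f)=\tau(\iota_0(f|_{\Gz}))=\tau|_{C_0(\Gz)}(f|_{\Gz})=\int_{\Gz}f|_{\Gz}\,\d\mu$. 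On the other hand, the computation above shows $E(q_\nu(f))=f|_{\Gz}$, so $\tau_\mu(f)=\int_{\Gz}f|_{\Gz}\,\d\mu$ as well. Thus $\tau$ and $\tau_\mu$ agree on $C_c(\G)$, and by density (both being continuous on $C^*_\nu(\G)$) they coincide.

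There is no real obstacle here; the only subtle point is making sure that $g$ really lies in $C_c(\G\setminus\Gz)$ rather than merely vanishing on $\Gz$, which is where the cloppenness of $\Gz$ is essential. Everything else is a direct unravelling of the definitions of $\tau_\mu$, $E$, $q_\nu$, and $\iota_0$.
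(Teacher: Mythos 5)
Your proof is correct and follows essentially the same route as the paper's (much terser) argument: both reduce to $C_c(\G)$ by density, split $f$ as $\iota_0(f|_{\Gz})$ plus a function in $C_c(\G\setminus\Gz)$ using the clopenness of $\Gz$, and observe that $\tau$ and $\tau_\mu$ agree on $C_c(\Gz)$ by the definition of the associated measure. Your write-up merely makes explicit the details (in particular that $\supp g\subseteq\G\setminus\Gz$ and that $E(q_\nu(f))=f|_{\Gz}$) that the paper leaves to the reader.
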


\begin{proof}
The forward implication is clear. Now we assume that $\tau(f) = 0$ for all $f \in C_c(\G \setminus \Gz)$. As $ C_c(\G)$ is dense in $C^*_\nu(\G)$, it suffices to show that $\tau(f) = \tau_\mu(f)$ for any $f \in C_c(\G)$, where $\mu$ is the measure associated to $\tau$. Since $\G$ is \'{e}tale, we have the decomposition $f = f|_{\Gz} + f|_{\G \setminus \Gz}$ in $C_c(\G)$. Hence by assumption, we have $\tau(f) = \tau(f|_{\Gz})$, which finishes the proof by the definition of the canonical trace (\ref{EQ:canonical trace}).
\end{proof}

It is interesting to know when every tracial state on $C^*_\nu(\G)$ is canonical. Recall from \cite[Lemma 4.3]{LR19} (see also \cite[Proposition II.5.4]{Ren80}) that if the groupoid $\G$ is principal, then every tracial state on $C^*_r(\G)$ is canonical. We would like to weaken the condition of being principal to the following notion of essential freeness.

\begin{defn}\label{defn:ess free}
For a locally compact Hausdorff and \'{e}tale groupoid $\G$ and an invariant probability Radon measure $\mu$ on $\Gz$, we say that $\G$ is \emph{essentially free with respect to $\mu$} if for any pre-compact open bisection $B \subseteq \G \setminus \Gz$, we have $\mu(\fix(\alpha_B)) = 0$. We say that $\G$ is \emph{essentially free} if $\G$ is essentially free with respect to any invariant probability Radon measure on $\Gz$.
\end{defn}

The following proposition is perhaps known to experts at least for the maximal $C^*$-norm (see \cite[Corollary~1.2]{Nes13}). We provide here a self-contained proof, because we cannot find the explicit statement we need in the literature. 

\begin{prop}\label{prop:ess. free implies unique trace}
Let $\G$ be a locally compact Hausdorff and \'{e}tale groupoid, and $\|\cdot\|_\nu$ be a $C^*$-norm on $C_c(\G)$ dominating the reduced $C^*$-norm. If $\tau$ is a tracial state on $C^*_\nu(\G)$ with the associated measure $\mu$ on $\Gz$ such that $\G$ is essentially free with respect to $\mu$, then $\tau$ is canonical. 
\end{prop}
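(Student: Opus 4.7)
The strategy is to invoke Lemma~\ref{lem:char for standard} and reduce to verifying $\tau(f) = 0$ for every $f \in C_c(\G \setminus \Gz)$. Since $C_c(\G)$ is linearly spanned by functions whose supports lie in pre-compact open bisections, we may assume $f \in C_c(B)$ for a pre-compact open bisection $B \subseteq \G \setminus \Gz$. Fix such an $f$ and $\varepsilon > 0$. The fixed-point set $F := \fix(\alpha_B)$ is a Borel subset of $\s(B)$ with $\mu(F)=0$ by essential freeness, so outer regularity of the Radon measure $\mu$ provides an open $O \subseteq \Gz$ with $F \subseteq O$ and $\mu(O) < \varepsilon$.

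For every $x$ in the compact set $K := \s(\supp f)\setminus O \subseteq \s(B)\setminus F$ we have $\alpha_B(x) \neq x$, so Hausdorffness of $\Gz$ together with continuity of $\alpha_B$ supplies an open neighbourhood $V_x \subseteq \s(B)$ with $V_x \cap \alpha_B(V_x) = \emptyset$. Compactness of $K$ yields a finite subcover $V_1,\dots,V_n$, and together with $O$ these form an open cover of $\s(\supp f)$. Pick a finite partition of unity $\{\phi_0,\phi_1,\dots,\phi_n\} \subseteq C_c(\Gz)$ subordinate to $\{O,V_1,\dots,V_n\}$ (with $\supp\phi_0 \subseteq O$ and $\supp\phi_i \subseteq V_i$) and decompose
\[
    f = f_0 + f_1 + \cdots + f_n, \qquad f_i(\gamma) := \phi_i(\s(\gamma))\cdot f(\gamma).
\]

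For each $i \geq 1$ the compact sets $\s(\supp f_i) \subseteq V_i$ and $\r(\supp f_i) = \alpha_B(\s(\supp f_i)) \subseteq \alpha_B(V_i)$ are disjoint, so Urysohn's lemma produces $h_i \in C_c(\Gz)$ equal to $1$ on $\s(\supp f_i)$ and $0$ on $\r(\supp f_i)$. A direct computation (using that convolution of $h \in C_0(\Gz)$ with a function supported on a bisection is just pointwise multiplication by $h \circ \r$ on the left and by $h \circ \s$ on the right) gives $h_i \ast f_i = 0$ and $f_i \ast h_i = f_i$, so the trace property yields $\tau(f_i) = \tau(f_i \ast h_i) - \tau(h_i \ast f_i) = 0$. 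For $f_0$: since $B$ is a bisection, a routine bookkeeping check shows that $f_0^* \ast f_0 \in C_0(\Gz)$ is supported in $\s(\supp f_0) \subseteq O$ with $\|f_0^* \ast f_0\|_\infty \leq \|f\|_\infty^2$. Cauchy--Schwarz for the state $\tau$ then gives
\[
    |\tau(f_0)|^2 \leq \tau(f_0^* \ast f_0) = \int_{\Gz} f_0^* \ast f_0 \,\d\mu \leq \|f\|_\infty^2 \cdot \mu(O) < \|f\|_\infty^2\varepsilon,
\]
where the middle equality uses that $\tau$ restricts to integration against $\mu$ on $C_0(\Gz)$. Adding the contributions, $|\tau(f)| \leq \|f\|_\infty\sqrt{\varepsilon}$, and letting $\varepsilon \to 0$ concludes the proof.

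The main technical hurdle is the careful bookkeeping required to verify the convolution identities $h_i \ast f_i = 0$, $f_i \ast h_i = f_i$, and the precise shape of $f_0^* \ast f_0$ on $\Gz$; conceptually, however, the argument is simply that essential freeness together with outer regularity localises the problem onto sub-bisections where source and range footprints are disjoint, and on such sub-bisections the trace property, applied against suitable elements of $C_0(\Gz)$, forces $\tau$ to vanish.
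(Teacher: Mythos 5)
Your proof is correct, and it runs on the same two engines as the paper's own argument: the trace identity $\tau(f_i\ast h_i)=\tau(h_i\ast f_i)$ against functions $h_i\in C_c(\Gz)$ to kill the part of $f$ sitting over points where $\s\neq\r$, and Cauchy--Schwarz together with the smallness of $\mu$ on an open neighbourhood of $\fix(\alpha_B)$ to control the remainder. The organization is genuinely different, though, and in two respects tidier. Where the paper first disposes of the case $\supp f\cap\{\gamma\in B:\s(\gamma)=\r(\gamma)\}=\emptyset$ and then reduces the general case to it by convolving with a cut-off $\rho_\varepsilon$ (which needs the slightly fiddly net argument showing $\supp(f\ast\rho_\varepsilon)$ misses the diagonal), you build the dichotomy into a single partition of unity subordinate to $\{O,V_1,\dots,V_n\}$, so each summand is either off-diagonal or supported over the small open set $O$. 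Moreover, you apply outer regularity of the probability Radon measure directly to the Borel null set $\fix(\alpha_B)$, whereas the paper enlarges $B$ to an auxiliary bisection $B_0$ in order to work with the compact set $\overline{\fix(\alpha_B)}\subseteq\fix(\alpha_{B_0})$; your route avoids that detour. Your Cauchy--Schwarz step is also more economical: $|\tau(f_0)|^2\le\tau(f_0^*\ast f_0)$ with $f_0^*\ast f_0\in C_c(\Gz)$ explicitly supported in $O$, versus the paper's estimate of $\tau(\eta\ast(f_0-f_0\cdot\rho_\varepsilon))$. The only step a final write-up should spell out is the computation showing $f_0^*\ast f_0$ is supported in $\Gz$ (it hinges on $B$ being a bisection, forcing $\alpha=\alpha\gamma^{-1}$ in the convolution sum), which you correctly flag as routine bookkeeping.
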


\begin{proof}

Our proof here is mainly inspired by the one for \cite[Proposition~1.1]{LLN09}. By Lemma \ref{lem:char for standard}, it suffices to show that $\tau(f)=0$ for any $f \in C_c(\G \setminus \Gz)$. By decomposing $f$ into its positive part and negative part, it suffices to show that $\tau(f)=0$ for any $f \in C_c(\G \setminus \Gz)$ which is point-wise non-negative. Using an argument of partitions of unity, we can additionally assume that $\supp f \subseteq B$ for some pre-compact open bisection $B$ such that $\overline{B} \subseteq B_0$ for another pre-compact open bisection $B_0\subseteq \G \setminus \Gz$. 

First we suppose that $\supp f \cap \{\gamma \in B: \s(\gamma) = \r(\gamma)\} = \emptyset$. Then for any $\gamma \in \supp f$, there exists  an open neighbourhood $W_\gamma$ of $\gamma$ such that $\s(W_\gamma) \cap \r(W_\gamma) = \emptyset$. Since $\supp f$ is compact, we can choose a finite cover $\{W_{\gamma_1}, \dotsc, W_{\gamma_N}\}$ for $\supp f$, and take a partition of unity $\{\rho_{\gamma_1}, \dotsc, \rho_{\gamma_N}\}$. Then we can write $f=\sum_{n=1}^N (\rho_n \cdot f)$, where $\rho_n \cdot f$ denotes the point-wise product of $\rho_n$ and $f$ as in Section \ref{ssec:groupoid C*-algebra}. For each $n=1,\dotsc, N$, take $h_n\in C_c(\s(W_{\gamma_n}))$ such that $h_n|_{\s(\supp (\rho_n \cdot f))} =1$ and $0 \leq h_n \leq 1$. A direct calculation as in the proof of \cite[Lemma 4.3]{LR19} shows that  that $(\rho_n \cdot f) \ast h_n = \rho_n \cdot f$ while $h_n \ast (\rho_n \cdot f)=0$. Hence, we obtain $\tau(\rho_n \cdot f) = \tau((\rho_n \cdot f) \ast h_n) = \tau(h_n \ast (\rho_n \cdot f)) = 0$, which implies that $\tau(f) = \sum_{n=1}^N \tau(\rho_n \cdot f) = 0$, as required.


Now we suppose that $\supp f \cap \{\gamma \in B: \s(\gamma) = \r(\gamma)\} \neq \emptyset$. Note that 
\[
\fix(\alpha_B) =  \{x\in \s(B): \alpha_B(x)=x\} \subseteq \{x\in \s(\overline{B}): \alpha_{\overline{B}} (x) = x\} = \fix(\alpha_{\overline{B}}),
\]
and $\fix(\alpha_{\overline{B}})$ is closed in $\s(\overline{B})$. It follows that $\overline{\fix(\alpha_B)}$ is compact and is contained in $\fix(\alpha_{B_0})$. By the essential freeness of $\mu$, we know that $\mu(\fix(\alpha_{B_0})) = 0$ and hence $\mu(\overline{\fix(\alpha_B)}) = 0$ as well. 

Since $\mu$ is a Radon measure, it is outer regular. Hence given $\varepsilon>0$, we can take an open set $U\subseteq \Gz$ containing the closure of $\s(\supp f) \cap \fix(\alpha_B)$ and an open set $V \supseteq \overline{U}$ such that $\mu(V) < \varepsilon$. Since $\s(\supp f)$ is compact and $V$ is open, $\s(\supp f) \setminus V$ is also compact. Hence we can take $\rho_\varepsilon \in C_c(\Gz)$ such that $0\leq \rho_\varepsilon\leq 1$, $\rho_\varepsilon|_{\s(\supp f) \setminus V} \equiv 1$ and $\rho_\varepsilon|_{\overline{U}} \equiv 0$. 

We aim to apply the argument of the second paragraph above to the function $f \ast \rho_\varepsilon$, which has support in $B$ by Lemma~\ref{lem:easy calculation 0}. Hence we must first show that $\supp(f \ast \rho_\varepsilon) \cap \{\gamma \in B: \s(\gamma) = \r(\gamma)\} = \emptyset$. Note that
\begin{align*}
\s(\supp(f \ast \rho_\varepsilon) \cap \{\gamma \in B: \s(\gamma) = \r(\gamma)\}) & \subseteq \s(\supp f \cap \{\gamma \in B: \s(\gamma) = \r(\gamma)\})\\
& = \s(\supp f) \cap \fix(\alpha_B) \subseteq U.
\end{align*}
Assume that there exists $\gamma \in \supp(f \ast \rho_\varepsilon) \cap \{\gamma \in B: \s(\gamma) = \r(\gamma)\}$. Since $\gamma \in (\s|_B)^{-1}(U) \cap \supp(f \ast \rho_\varepsilon)$, we can choose a net $\{\gamma_\lambda\}_\lambda$ in $(\s|_B)^{-1}(U)$ converging to $\gamma$ such that $(f\ast \rho_\varepsilon)(\gamma_\lambda) \neq 0$, then Lemma \ref{lem:easy calculation 0} implies that $\rho_\varepsilon(\s(\gamma_\lambda)) \neq 0$ for each $\lambda$. We reach a contradiction since $\s(\gamma_\lambda) \in U$ and $\rho_\varepsilon|_{U} \equiv 0$. Therefore, the same analysis in the second paragraph of this proof shows that $\tau(f \ast \rho_\varepsilon) = 0$.

Take $\eta \in C_c(B_0)$ such that $\eta|_{\overline{B}} \equiv 1$ and $||\eta ||_{\infty}=1$, and define $f_0 \in C_c(\Gz)$ by $f_0(x)=f \circ (\s|_{B_0})^{-1}(x)$ if $x\in \s(B_0)$ and zero otherwise. As $\supp f \subseteq B_0$, $f_0$ is a continuous function on $\Gz$ such that $\supp f_0= \s(\supp f)$. Then Lemma \ref{lem:easy calculation 0} implies that
\[
(\eta \ast f_0)(\gamma) = \eta(\gamma) \cdot f_0(\s(\gamma)) = \eta(\gamma) \cdot f(\gamma) = f(\gamma), \quad \forall \gamma \in \G, 
\]
which means that $\eta \ast f_0 = f$. Afterwards we obtain
\begin{equation}\label{EQ:cal for tau 1}
|\tau(f)| = |\tau(f) - \tau(f \ast \rho_\varepsilon)| = |\tau(\eta \ast (f_0-f_0 \cdot \rho_\varepsilon))|.
\end{equation}
Since $f_0-f_0 \cdot \rho_\varepsilon$ is point-wise non-negative, we write
\[
\eta \ast (f_0-f_0 \cdot \rho_\varepsilon) = \left( \eta \ast (f_0-f_0 \cdot \rho_\varepsilon)^{1/2} \right) \ast (f_0-f_0 \cdot \rho_\varepsilon)^{1/2}.
\]
Using the Cauchy--Schwarz inequality $|\tau(y^*x)|^2 \leq \tau(x^* x) \cdot \tau(y^*y)$ for  $x=(f_0-f_0 \cdot \rho_\varepsilon)^{1/2}$ and $y^*= \eta \ast (f_0-f_0 \cdot \rho_\varepsilon)^{1/2}$, we obtain
\begin{equation}\label{EQ:cal for tau 2}
|\tau(\eta \ast (f_0-f_0 \cdot \rho_\varepsilon))|^2 \leq \tau(f_0 - f_0 \cdot \rho_\varepsilon) \cdot \tau(\eta \ast (f_0 - f_0 \cdot \rho_\varepsilon) \ast \eta^*).
\end{equation}
Using properties of tracial states, we have
\begin{equation}\label{EQ:cal for tau 3}
\tau(\eta \ast (f_0 - f_0 \cdot \rho_\varepsilon) \ast \eta^*) = \tau((f_0 - f_0 \cdot \rho_\varepsilon)^{1/2} \ast \eta^* \ast \eta \ast (f_0 - f_0 \cdot \rho_\varepsilon)^{1/2}) \leq \tau(f_0 - f_0 \cdot \rho_\varepsilon),
\end{equation}
where we use $\|\eta\|_\infty =1$ for the second inequality.
Combining (\ref{EQ:cal for tau 1}), (\ref{EQ:cal for tau 2}) and (\ref{EQ:cal for tau 3}), we obtain
\[
|\tau(f)| \leq \tau(f_0 - f_0 \cdot \rho_\varepsilon).
\]
Finally, from $\mu(V)<\epsilon$ we have that
\[
\tau(f_0 - f_0 \cdot \rho_\varepsilon) = \int_{\Gz} f_0 \cdot (1-\rho_\varepsilon) \d \mu = \int_{\s(\supp f) \cap V} f_0 \cdot (1-\rho_\varepsilon) \d \mu \leq \varepsilon \cdot \|f_0\|_\infty,
\]
which goes to $0$ as $\varepsilon \to 0$. Therefore, we conclude that $\tau(f)=0$, as desired.
\end{proof}

\begin{rem}\label{rem:ess. free}
If $\iso(\G):=\{\gamma \in \G: \r(\gamma)  = \s(\gamma)\}$ denotes the isotropy groupoid of an étale groupoid $\G$, then we have
\[
\s(\iso(\G) \setminus \Gz) = \bigcup\{\fix(\alpha_B): B \text{ is a pre-compact open bisection in } \G \setminus \Gz\}.
\] 
If $\G$ is $\sigma$-compact, then $\s(\iso(\G) \setminus \Gz)$ is a countable union of measurable sets. In particular, it is measurable. Therefore, $\G$ is in this case essentially free with respect to $\mu$ if and only if $\mu(\s(\iso(\G) \setminus \Gz))=0$.

\end{rem}

%

If we assume that $\Gz$ is compact then the convex set $M(\G)$ of invariant probability Radon measures on $\Gz$ and the tracial state space $T(C^*_\nu(\G))$ of $C^*_\nu(\G)$ are both compact in the weak$^\ast$-topology. Then the following corollary of Proposition~\ref{prop:ess. free implies unique trace} has generalised \cite[Proposition 3.1]{ABBL20}, because almost finite ample groupoids with compact unit space are always essentially free by \cite[Remark~6.6]{MR2876963}:

\begin{cor}\label{cor:generalisation for ABBL20, Prop2.2}
Let $\G$ be a locally compact Hausdorff and \'{e}tale groupoid with compact unit space which is also essentially free. Then the canonical map $\tau \mapsto \mu_\tau$ from $T(C^*_\nu(\G))$ to $M(\G)$ is an affine homeomorphism, and hence we can identify their extreme boundaries $\partial_e T(C^*_\nu(\G)) = \partial_e M(\G)$. In particular, this holds for both maximal and reduced $C^*$-norms.
\end{cor}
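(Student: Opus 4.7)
The plan is to exhibit a two-sided inverse of the map $\Phi: \tau \mapsto \mu_\tau$ and then argue continuity on both sides. Define $\Psi: M(\G) \to T(C^*_\nu(\G))$ by $\Psi(\mu) = \tau_\mu$, where $\tau_\mu(a) = \int_{\Gz} E(q_\nu(a)) \d\mu$ as in \eqref{EQ:tau mu max}. The identity $\Phi \circ \Psi = \mathrm{Id}_{M(\G)}$ is immediate from $E \circ \iota = \mathrm{Id}_{C(\Gz)}$. For the other composition, I would invoke Proposition~\ref{prop:ess. free implies unique trace}: given any $\tau \in T(C^*_\nu(\G))$, the measure $\mu_\tau$ is an invariant probability Radon measure on $\Gz$, and since $\G$ is essentially free (hence essentially free with respect to $\mu_\tau$), the proposition forces $\tau$ to be canonical, i.e.\ $\tau = \tau_{\mu_\tau} = \Psi(\Phi(\tau))$.

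Next I would verify that both $\Phi$ and $\Psi$ are affine, which is transparent: $\Phi$ is the restriction-and-Riesz map, and $\Psi$ is defined by integration of a fixed bounded linear functional on $C(\Gz)$ against $\mu$. For weak$^*$-continuity of $\Phi$, observe that for each $f \in C(\Gz) \subseteq C^*_\nu(\G)$ one has $\int_{\Gz} f\, \d\mu_\tau = \tau(\iota(f))$, and $\tau \mapsto \tau(\iota(f))$ is weak$^*$-continuous by definition of the weak$^*$-topology on $T(C^*_\nu(\G))$. For weak$^*$-continuity of $\Psi$, note that for each $a \in C^*_\nu(\G)$ we have $E(q_\nu(a)) \in C(\Gz)$, so $\mu \mapsto \tau_\mu(a) = \int_{\Gz} E(q_\nu(a))\, \d\mu$ is weak$^*$-continuous on $M(\G)$.

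Since $\Gz$ is compact, $T(C^*_\nu(\G))$ is a weak$^*$-closed subset of the (compact) state space of the unital $C^*$-algebra $C^*_\nu(\G)$, hence weak$^*$-compact Hausdorff; similarly $M(\G)$ is a weak$^*$-closed subset of the compact space of probability Radon measures on $\Gz$ (invariance being a closed condition, since $\mu \mapsto \int f\, \d\mu - \int f \circ \alpha_B\, \d\mu$ is weak$^*$-continuous for each open bisection $B$ and each $f \in C_c(\s(B))$). A continuous bijection between compact Hausdorff spaces is automatically a homeomorphism, so $\Phi$ is an affine homeomorphism with inverse $\Psi$. Finally, any affine homeomorphism between compact convex sets restricts to a bijection of extreme points, yielding $\partial_e T(C^*_\nu(\G)) = \partial_e M(\G)$. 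The last sentence of the statement follows since both the maximal and reduced $C^*$-norms dominate the reduced norm, placing them under the hypothesis of Proposition~\ref{prop:ess. free implies unique trace}.

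I do not expect any real obstacle: the only non-trivial input is Proposition~\ref{prop:ess. free implies unique trace}, which supplies the surjectivity of $\Psi$ (equivalently, injectivity of $\Phi$). Everything else is a routine compactness-plus-continuity argument; the compactness of $T(C^*_\nu(\G))$ crucially uses unitality of $C^*_\nu(\G)$, which is ensured by compactness of $\Gz$.
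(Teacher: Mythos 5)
Your proof is correct and follows exactly the route the paper intends: the paper gives no explicit argument, presenting the statement as an immediate consequence of Proposition~\ref{prop:ess. free implies unique trace} together with the compactness of $T(C^*_\nu(\G))$ and $M(\G)$ noted just before the corollary, and your write-up simply fills in the routine bijection/affineness/continuity/compactness details of that same argument.
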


In the following, we would like to study Question \ref{Ques:converse} for a locally compact Hausdorff and \'{e}tale groupoid such that $\Gz$ is not necessarily compact. For that we consider an auxiliary trace $\tau^\fix_\mu: C_c(\G) \to \CC$ associated to a given invariant probability Radon measure $\mu$ on $\Gz$. The key point is that $\tau^\fix_\mu$ reveals the complete information of essential freeness with respect to $\mu$ (see Lemma~\ref{lem:tau fix char strong eff.} for details). Since its construction is a bit complicated, we divide it into several steps.

Firstly, for any pre-compact open bisection $B$ and $g\in C_c(B)$ we define $g_B\in C_c(\s(B))$ by $g_B(x) := g((\s|_B)^{-1}(x))$ for $x\in \s(B)$. Similarly, we define $g^B \in C_c(\r(B))$ by $g^B(x) := g((\r|_B)^{-1}(x))$ for $x\in \r(B)$. For such $g$ and $B$, we define 
\begin{equation}\label{EQ:fix trace bisection case}
\tau^\fix_\mu(g) := \int_{\fix(\alpha_B)} g_B \d \mu.
\end{equation}
The following observation shows that $\tau^\fix_\mu(g)$ is well-defined.

\begin{lem}\label{lem:well defineness for tau fix on B1}
Assume that $B_1, B_2$ are pre-compact open bisections and $g\in C_c(B_1) \cap C_c(B_2)$. Then we have
\[
\int_{\fix(\alpha_{B_1})} g_{B_1} \d \mu = \int_{\fix(\alpha_{B_2})} g_{B_2} \d \mu.
\]
\end{lem}

\begin{proof}
Taking $B = B_1 \cap B_2$, it suffices to show that $\int_{\fix(\alpha_{B_i})} g_{B_i} \d \mu = \int_{\fix(\alpha_{B})} g_{B} \d \mu$ for $i=1,2$. Note that $\supp (g) \subseteq B_1 \cap B_2 = B$, and hence $\supp(g_B) \subseteq \s(B)$ and $g_B = g_{B_i}$ for $i=1,2$. Therefore, for $i=1,2$ we have
\[
\int_{\fix(\alpha_{B_i})} g_{B_i} \d \mu = \int_{\fix(\alpha_{B_i}) \cap \s(B)} g_B \d \mu.
\]
We note that $x\in \fix(\alpha_{B_i}) \cap \s(B)$ if and only if there exists $\gamma_i \in B_i$ such that $\s(\gamma_i) = \r(\gamma_i) = x \in \s(B)$, which implies that $\gamma_i \in B$ and hence $x\in \fix(\alpha_B)$. It follows that $\fix(\alpha_{B_i}) \cap \s(B) = \fix(\alpha_B)$ for $i=1,2$, as desired.
\end{proof}

Moreover, we have the following:

\begin{lem}\label{lem:well defineness for tau fix on B}
Let $g \in C_c(\G)$ with $g= \sum_{i=1}^n g_i = \sum_{j=1}^m h_j$, where $g_i \in C_c(B_i)$ for some pre-compact open bisection $B_i$ and $h_j \in C_c(D_j)$ for some pre-compact open bisection $D_j$. Then we have 
\[
\sum_{i=1}^n \tau^\fix_\mu(g_i) = \sum_{j=1}^m \tau^\fix_\mu(h_j).
\]
\end{lem}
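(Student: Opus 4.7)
The plan is to produce a single intrinsic quantity that depends only on $g$ and $\mu$ (not on any decomposition) and to show that both sides of the claimed equality reduce to it. For $g \in C_c(\G)$ and $x \in \Gz$, define
\[
F_g(x) := \sum_{\gamma \in \G_x^x} g(\gamma).
\]
The sum is finite because $\supp g$ is compact and $\G_x^x$ is discrete (by étaleness), so $F_g$ is well-defined pointwise, and by linearity of summation $F_g = \sum_i F_{g_i}$ whenever $g = \sum_i g_i$.

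The first step is to verify the identity
\[
\tau^\fix_\mu(g_i) = \int_{\Gz} F_{g_i} \, \d\mu
\]
for any $g_i \in C_c(B_i)$ with $B_i$ a pre-compact open bisection. The key observation is that because $B_i$ is a bisection, for each $x \in \Gz$ there is at most one $\gamma \in B_i$ with $\s(\gamma) = x$, and such $\gamma$ belongs to $\G_x^x$ precisely when $\r(\gamma) = \alpha_{B_i}(x) = x$, i.e.\ when $x \in \fix(\alpha_{B_i})$. Consequently
\[
F_{g_i}(x) = \mathbf{1}_{\fix(\alpha_{B_i})}(x) \cdot (g_i)_{B_i}(x),
\]
where $(g_i)_{B_i}$ is interpreted as extended by zero outside $\s(B_i)$. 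Since $\fix(\alpha_{B_i})$ was already observed to be measurable (intersection of an open set and a closed set) and $(g_i)_{B_i}$ is continuous with compact support, $F_{g_i}$ is a bounded, compactly supported Borel function, and integrating recovers formula (\ref{EQ:fix trace bisection case}).

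Once this identity is available, the conclusion is immediate by linearity of the integral:
\[
\sum_{i=1}^n \tau^\fix_\mu(g_i) = \sum_{i=1}^n \int_{\Gz} F_{g_i} \, \d\mu = \int_{\Gz} F_g \, \d\mu = \sum_{j=1}^m \int_{\Gz} F_{h_j} \, \d\mu = \sum_{j=1}^m \tau^\fix_\mu(h_j),
\]
where the middle equalities use $F_g = \sum_i F_{g_i} = \sum_j F_{h_j}$, which holds pointwise because both decompositions sum to $g$ at every point of $\G$ (in particular at every isotropy element).

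The only mildly delicate point, and hence the step I expect to require the most care, is the pointwise identification of $F_{g_i}$ with $\mathbf{1}_{\fix(\alpha_{B_i})} \cdot (g_i)_{B_i}$: this relies on the bisection property to rule out multiple contributions from $\G_x^x \cap \supp g_i$ and to identify the unique possible contributor with $(\s|_{B_i})^{-1}(x)$. Everything else is bookkeeping in terms of the pointwise-defined function $F_g$, and no partition-of-unity argument or common refinement of the two decompositions is needed.
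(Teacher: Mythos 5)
Your proof is correct, but it takes a genuinely different route from the paper's. The paper proves well-definedness by a common-refinement argument: it covers the union of all supports by finitely many pre-compact open bisections, takes a subordinate partition of unity $\{\rho_k\}$, and reduces the comparison of the two decompositions to additivity of $\tau^\fix_\mu$ on functions supported in a single bisection $U_k$ (together with the preceding two-bisection compatibility lemma). You instead exhibit $\tau^\fix_\mu$ on each $C_c(B_i)$ as integration against $\mu$ of the decomposition-independent function $F_g(x)=\sum_{\gamma\in\G_x^x}g(\gamma)$, after which well-definedness is automatic from linearity of finite sums and of the integral. The key identity $F_{g_i}=\mathbf{1}_{\fix(\alpha_{B_i})}\cdot (g_i)_{B_i}$ is verified correctly (the bisection property gives at most one contributor in $\G_x^x\cap B_i$, and measurability of $\fix(\alpha_{B_i})$ plus continuity of the zero-extension of $(g_i)_{B_i}$ make $F_{g_i}$ integrable), and the finiteness of $\G_x^x\cap\supp g$ needed to interchange the sums follows from \'{e}taleness. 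What your approach buys is a shorter, refinement-free argument; it is in essence the formula $\tau^\fix_\mu(f)=\int_{\Gz}\tau^\triv_x(\eta_x(f))\,\d\mu(x)$ that the paper only establishes later (Lemma~\ref{lem:measurable fields for tau fix}), used here as the foundation of the definition rather than derived as a consequence of it. The paper's partition-of-unity proof stays strictly within the elementary bisection-by-bisection definition, at the cost of more bookkeeping.
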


\begin{proof}
Since $K:=\bigcup_{i=1}^n \supp(g_i) \cup \bigcup_{j=1}^m \supp(h_j)$ is compact, we can take a finite open cover $\U=\{U_k:k=1,\dotsc,N\}$ of $K$ such that each $U_k$ is a pre-compact open bisection. Then we take a partition of unity $\{\rho_k:k=1,\dotsc, N \}$ subordinate to $\U$ such that $\sum_{k=1}^{N} \rho_k \equiv 1$ on $K$. In particular, we have $g_i = \sum_{k=1}^{N} (\rho_k \cdot g_i)$ for any $i=1,\dotsc, n$, where $\rho_k \cdot g_i$ means the point-wise product as in Section \ref{ssec:groupoid C*-algebra}. As both $\supp(g_i)$ and $\supp(\rho_k \cdot g_i)$ are contained in $B_i$, it follows from Lemma \ref{lem:well defineness for tau fix on B1} that
\begin{equation}\label{EQ:well define}
\tau^\fix_\mu(g_i) = \int_{\fix(\alpha_{B_i})} (g_i)_{B_i} \d \mu \quad \text{and} \quad \tau^\fix_\mu(\rho_k \cdot g_i) = \int_{\fix(\alpha_{B_i})} (\rho_k \cdot g_i)_{B_i} \d \mu.
\end{equation}
For each $x\in \s(B_i)$, we have $(\rho_k \cdot g_i)_{B_i}(x) = \rho_k((s|_{B_i})^{-1}(x)) \cdot g_i((s|_{B_i})^{-1}(x))$. Hence, we obtain $\sum_{k=1}^{N} (\rho_k \cdot g_i)_{B_i} = (g_i)_{B_i}$, which together with (\ref{EQ:well define}) implies that
\[
\tau^\fix_\mu(g_i) = \int_{\fix(\alpha_{B_i})} (g_i)_{B_i}  \d \mu= \sum_{k=1}^{N} \int_{\fix(\alpha_{B_i})}(\rho_k \cdot g_i)_{B_i}\d \mu = \sum_{k=1}^{N}\tau^\fix_\mu(\rho_k \cdot g_i).
\]
So we obtain
\begin{equation}\label{EQ:well define 1}
\sum_{i=1}^n \tau^\fix_\mu(g_i) = \sum_{i=1}^n \sum_{k=1}^{N} \tau^\fix_\mu(\rho_k \cdot g_i) = \sum_{k=1}^{N} \sum_{i=1}^n \tau^\fix_\mu(\rho_k \cdot g_i).
\end{equation}
Similarly, we also have
\begin{equation}\label{EQ:well define 2}
\sum_{j=1}^m \tau^\fix_\mu(h_j) = \sum_{k=1}^{N} \sum_{j=1}^m \tau^\fix_\mu(\rho_k \cdot h_j).
\end{equation}
For a fixed $k\in \{1, \dotsc, N\}$, using a similar argument as above we have
\begin{equation}\label{EQ:well define 3}
\sum_{i=1}^n \tau^\fix_\mu(\rho_k \cdot g_i)  = \tau^\fix_\mu \big(\sum_{i=1}^n \rho_k \cdot g_i\big) = \tau^\fix_\mu(\rho_k \cdot g) = \tau^\fix_\mu\big(\sum_{j=1}^m \rho_k \cdot h_j\big) = \sum_{j=1}^m \tau^\fix_\mu(\rho_k \cdot h_j).
\end{equation}
Finally, we conclude the proof by  (\ref{EQ:well define 1}), (\ref{EQ:well define 2}) and (\ref{EQ:well define 3}).
\end{proof}

Lemma \ref{lem:well defineness for tau fix on B} allows us to give the following definition of $\tau^\fix_\mu$ on $C_c(\G)$.

\begin{defn}\label{defn:fix trace}
Let $\G$ be a locally compact Hausdorff and \'{e}tale groupoid, and $\mu$ be an invariant probability Radon measure on $\Gz$. The associated \emph{fixed point trace} is the linear map $\tau^\fix_\mu: C_c(\G) \to \CC$ defined as follows: Suppose that $g\in C_c(\G)$ with $g= \sum_{i=1}^n g_i$ such that each $g_i \in C_c(B_i)$ for some pre-compact open bisection $B_i$. Then we define 
\begin{equation}\label{EQ:tau fix general}
\tau^\fix_\mu(g) :=\sum_{i=1}^n \tau^\fix_\mu(g_i),
\end{equation}
where $\tau^\fix_\mu(g_i)$ is given in (\ref{EQ:fix trace bisection case}).
\end{defn}

We record the following fact, which is straightforward from the construction.

\begin{lem}\label{lem:tau fix defined}
For $f\in C_c(\Gz)$, we have $\tau^\fix_\mu(f) = \int_{\Gz} f \d \mu$.
\end{lem}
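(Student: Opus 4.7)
The plan is a direct verification using the fact that the unit space $\Gz$ is itself (covered by) pre-compact open bisections on which the source and range maps act trivially, so that the formula (\ref{EQ:fix trace bisection case}) reduces to ordinary integration against $\mu$.

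First, I would reduce to the bisection case. Since $\G$ is étale, $\Gz$ is open in $\G$, so $\supp f$ is a compact subset of the open set $\Gz$. I can therefore cover $\supp f$ by finitely many pre-compact open sets $U_1,\dots,U_n\subseteq \Gz$ and choose a partition of unity $\{\rho_i\}$ subordinate to $\{U_i\}$ with $\sum_i\rho_i\equiv 1$ on $\supp f$. Writing $f_i:=\rho_i f\in C_c(U_i)$, we have $f=\sum_i f_i$, and by Definition~\ref{defn:fix trace} together with the well-definedness (Lemma~\ref{lem:well defineness for tau fix on B}) we get
\[
\tau^\fix_\mu(f)=\sum_{i=1}^n \tau^\fix_\mu(f_i)=\sum_{i=1}^n\int_{\fix(\alpha_{U_i})}(f_i)_{U_i}\,\d\mu.
\]
It thus suffices to verify that for each pre-compact open set $U\subseteq\Gz$ (which is automatically a bisection) and each $g\in C_c(U)$, one has
\[
\int_{\fix(\alpha_U)} g_U\,\d\mu=\int_{\Gz} g\,\d\mu.
\]

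The key observation is that on $\Gz$ the source and range maps coincide with the identity: $\s(x)=\r(x)=x$ for all $x\in\Gz$. Hence for any open set $U\subseteq\Gz$, the maps $\s|_U$ and $\r|_U$ are both the identity homeomorphism $U\to U$. In particular $U$ is a bisection, the map $\alpha_U$ defined in (\ref{EQ:alpha B}) is $\mathrm{id}_U$, so $\fix(\alpha_U)=U$, and the function $g_U=g\circ(\s|_U)^{-1}$ equals $g|_U$. Therefore
\[
\int_{\fix(\alpha_U)} g_U\,\d\mu=\int_U g\,\d\mu=\int_{\Gz} g\,\d\mu,
\]
where the last equality uses $\supp g\subseteq U\subseteq \Gz$.

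Plugging this into the partition of unity decomposition gives
\[
\tau^\fix_\mu(f)=\sum_{i=1}^n\int_{\Gz} f_i\,\d\mu=\int_{\Gz}\sum_{i=1}^n f_i\,\d\mu=\int_{\Gz}f\,\d\mu,
\]
as desired. There is essentially no obstacle here; the only substantive point is the harmless but crucial remark that an open subset of $\Gz$ is a bisection on which $\alpha$ is the identity, which is why the author calls the statement ``straightforward from the construction.''
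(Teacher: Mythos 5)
Your proof is correct and is exactly the ``straightforward from the construction'' verification that the paper omits: the paper gives no proof of this lemma, and the intended argument is precisely your observation that any open $U\subseteq\Gz$ is a bisection with $\alpha_U=\mathrm{id}_U$, so $\fix(\alpha_U)=U$ and $g_U=g$, whence (\ref{EQ:fix trace bisection case}) reduces to $\int_{\Gz}g\,\d\mu$. The only simplification available is that you could skip the partition of unity entirely by covering the compact set $\supp f$ with a single pre-compact open subset of $\Gz$ (a finite union of such sets), but this is cosmetic.
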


The next lemma shows that $\tau^\fix_\mu$ is positive and has the tracial property.

\begin{lem}\label{lem:tau fix is a trace}
For any $a \in C_c(\G)$, we have $\tau^\fix_\mu(a^*a) = \tau^\fix_\mu(aa^*) \geq 0$.
\end{lem}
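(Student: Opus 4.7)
The plan is to reduce both the trace identity and the positivity statement to functions supported on single pre-compact open bisections by a partition-of-unity argument, and then apply invariance of $\mu$ for the trace property and a combinatorial identity on isotropy orbits for positivity.

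First, I write $a = \sum_{i=1}^{n} g_i$ with each $g_i \in C_c(B_i)$ for some pre-compact open bisection $B_i$, as discussed in Section \ref{ssec:groupoid}. Then $a^{*} \ast a = \sum_{i,j} g_i^{*} \ast g_j$ and $a \ast a^{*} = \sum_{i,j} g_i \ast g_j^{*}$, so by bilinearity of $\tau^\fix_\mu$ and a relabelling $i \leftrightarrow j$, the trace identity reduces to
\[
\tau^\fix_\mu(g_i^{*} \ast g_j) = \tau^\fix_\mu(g_j \ast g_i^{*})
\]
for each pair of bisections. To prove this bisection-level identity, I would unwind the convolution: writing each $\gamma \in B_i^{-1} B_j$ as $\beta_i^{-1}\beta_j$ with $\beta_i \in B_i$ and $\beta_j \in B_j$, one shows $\fix(\alpha_{B_i^{-1} B_j}) = \{x \in \s(B_i) \cap \s(B_j) : \alpha_{B_i}(x) = \alpha_{B_j}(x)\}$ and $(g_i^{*} \ast g_j)_{B_i^{-1} B_j}(x) = \overline{(g_i)_{B_i}(x)} \, (g_j)_{B_j}(x)$ on this set. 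A parallel computation on $B_j B_i^{-1}$ gives the integrand $(g_j)^{B_j} \overline{(g_i)^{B_i}}$ on a corresponding subset of $\r(B_i) \cap \r(B_j)$, and the invariance property $\mu|_{\r(B_i)} = (\alpha_{B_i})_{*}(\mu|_{\s(B_i)})$ transports the second integral back to the first.

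For positivity, I would first establish the reformulation
\[
\tau^\fix_\mu(h) = \int_{\Gz} \sum_{\gamma \in \G^{x}_{x}} h(\gamma)\, d\mu(x), \qquad h \in C_c(\G),
\]
which follows directly from the definition of $\tau^\fix_\mu$, since $B \cap \G^{x}_{x}$ consists of exactly the one point $(\s|_B)^{-1}(x)$ when $x \in \fix(\alpha_B)$ and is empty otherwise. Applying this to $a^{*} \ast a$ and unpacking the convolution, at each fixed $x$ I partition $\G_x$ into its right $\G^{x}_{x}$-orbits $\G^{y}_{x} = \{\alpha \in \G_x : \r(\alpha) = y\}$, pick a basepoint in each orbit, and substitute $\eta = \delta\gamma^{-1}$ in the resulting double sum; this collapses the contribution of each orbit to $\bigl|\sum_{\alpha \in \G^{y}_{x}} a(\alpha)\bigr|^2 \geq 0$, and summing over $y$ and integrating over $x$ gives $\tau^\fix_\mu(a^{*} \ast a) \geq 0$.

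The main obstacle will be the trace identity, because pointwise on isotropy the sums $\sum_\gamma (a^{*} \ast a)(\gamma)$ and $\sum_\gamma (a \ast a^{*})(\gamma)$ generally differ and the equality emerges only after integrating against $\mu$ via its invariance. The delicate step is the bookkeeping required to identify the two fixed-point sets $\fix(\alpha_{B_i^{-1} B_j})$ and $\fix(\alpha_{B_j B_i^{-1}})$ and match their integrands under the homeomorphism $\alpha_{B_i}$.
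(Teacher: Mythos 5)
Your proof is correct. For the trace identity $\tau^\fix_\mu(a^*a)=\tau^\fix_\mu(aa^*)$ you follow essentially the same route as the paper: reduce to a single pair of bisections, identify $\alpha_{B_i^{-1}B_j}=\alpha_{B_i}^{-1}\circ\alpha_{B_j}$ and the integrand $\overline{(g_i)_{B_i}}\,(g_j)_{B_j}$ on its fixed-point set, and transport the integral via the invariance $\mu|_{\r(B_i)}=(\alpha_{B_i})_*(\mu|_{\s(B_i)})$; your computations check out (just make sure to record, as the paper does via Lemma~\ref{lem:multiplication of bisection}, that $B_i^{-1}B_j$ is again a pre-compact \emph{open} bisection containing $\supp(g_i^*\ast g_j)$, so that the formula~(\ref{EQ:fix trace bisection case}) applies to it). Where you genuinely diverge is the positivity argument: the paper stays inside the bisection decomposition, writes $\tau^\fix_\mu(aa^*)$ as $\int_{\Gz}F\,\d\mu$ with $F(x)=\sum_{i,j}\chi_{B_{ij}}(g_j)^{B_j}(x)\overline{(g_i)^{B_i}(x)}$, and partitions the index set $\{i:x\in\r(B_i)\}$ into classes on which $\alpha_{B_i}^{-1}(x)$ agrees to exhibit $F(x)$ as a sum of squares; you instead first prove the intrinsic formula $\tau^\fix_\mu(h)=\int_{\Gz}\sum_{\gamma\in\G^x_x}h(\gamma)\,\d\mu(x)$ and then decompose $\G_x$ into the free transitive right $\G^x_x$-orbits $\G^y_x$, collapsing each orbit's contribution to $\bigl|\sum_{\alpha\in\G^y_x}a(\alpha)\bigr|^2$. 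Your version is decomposition-independent and arguably cleaner; note that the intrinsic formula you use is exactly the paper's later Lemma~\ref{lem:measurable fields for tau fix} (the decomposition over trivial representations of the isotropy groups), so your route amounts to proving that lemma earlier and deriving positivity from it, whereas the paper's index-class bookkeeping is the same sum-of-squares identity expressed through the chosen cover by bisections.
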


\begin{proof}
Assume that $a= \sum_{i=1}^n g_i$ such that each $g_i \in C_c(B_i)$ for some pre-compact open bisection $B_i$. Then we have $a^*a = \sum_{i,j=1}^n g_i^* \ast g_j$ and $aa^* = \sum_{i,j=1}^n g_j \ast g_i^*$. Note that for any $\gamma \in \G$ and $i,j=1, \dotsc, n$, we have
\[
(g_j \ast g_i^*)(\gamma) = \sum_{\alpha \in \G_{\s(\gamma)}} g_j(\gamma \alpha^{-1}) \cdot g_i^*(\alpha) = \sum_{\alpha \in \G_{\s(\gamma)}} g_j(\gamma \alpha^{-1}) \cdot\overline{g_i(\alpha^{-1})}.
\] 
If $(g_j \ast g_i^*)(\gamma) \neq 0$, then there exists a unique $\alpha^{-1} \in B_i$ such that $\r(\alpha^{-1}) = \s(\gamma)$ and $(g_j \ast g_i^*)(\gamma) = g_j(\gamma \alpha^{-1}) \cdot\overline{g_i(\alpha^{-1})} \neq 0$. This shows that 
\[
\supp(g_j \ast g_i^*) \subseteq \supp(g_j) \cdot (\supp(g_i))^{-1} \subseteq B_j \cdot B_i^{-1}.
\]
Since multiplication and inversion are continuous and $\G$ is Hausdorff, the set $B_j \cdot B_i^{-1}$ is precompact, and it follows from Lemma \ref{lem:multiplication of bisection} that $B_j \cdot B_i^{-1}$ is also an open bisection. Moreover, for any $\gamma \in B_j \cdot B_i^{-1}$ there exist unique $\beta \in B_j$ and $\alpha \in B_i^{-1}$ such that $\gamma = \beta\alpha$, and hence $(g_j \ast g_i^*)(\gamma) =g_j(\beta) \cdot \overline{g_i(\alpha^{-1})}$.

It is also worth noticing that 
\[
\s(B_j \cdot B_i^{-1}) = \r\big((\s|_{B_i})^{-1}(\s(B_j) \cap \s(B_i))\big) \quad \text{and} \quad \r(B_j \cdot B_i^{-1}) = \r\big((\s|_{B_j})^{-1}(\s(B_j) \cap \s(B_i))\big).
\]
Thus by definition, a direct calculation shows that $\alpha_{B_j \cdot B_i^{-1}} = \alpha_{B_j} \circ \alpha_{B_i}^{-1}$. Moreover, if $x\in \fix(\alpha_{B_j} \circ \alpha_{B_i}^{-1})$ then we take $\gamma \in B_j \cdot B_i^{-1}$ such that $\s(\gamma) =x$. Writing $\gamma = \beta \alpha^{-1}$ for $\beta \in B_j$ and $\alpha \in B_i$, then $\s(\alpha) = \s(\beta)$ and $\r(\alpha) = x = \alpha_{B_j} \circ \alpha_{B_i}^{-1} (x) =\alpha_{B_j}(\s(\alpha))= \alpha_{B_j}(\s(\beta))= \r(\beta)=\r(\gamma)$. So for $x\in \fix(\alpha_{B_j} \circ \alpha_{B_i}^{-1})$ we obtain
\[
(g_j \ast g_i^*)_{B_j \cdot B_i^{-1}}(x) = (g_j)^{B_j}(x) \cdot \overline{(g_i)^{B_i}(x)}.
\]
Therefore, we obtain
\begin{align}\label{EQ:trace check 2}
\tau^\fix_\mu(aa^*) &= \sum_{i,j=1}^n \tau^\fix_\mu(g_j \ast g_i^*) = \sum_{i,j=1}^n \int_{\fix(\alpha_{B_j \cdot B_i^{-1}})} (g_j \ast g_i^*)_{B_j \cdot B_i^{-1}}(x) \d \mu(x)  \nonumber \\ 
&= \sum_{i,j=1}^n \int_{\fix(\alpha_{B_j} \circ \alpha_{B_i}^{-1})} (g_j)^{B_j}(x) \cdot \overline{(g_i)^{B_i}(x)}\d \mu(x).
\end{align}

On the other hand, we have that
\begin{align}\label{EQ:trace check 3}
\tau^\fix_\mu(a^*a) &= \sum_{i,j=1}^n \tau^\fix_\mu(g_i^* \ast g_j) = \sum_{i,j=1}^n \int_{\fix(\alpha_{B_i^{-1} \cdot B_j})} (g_i^* \ast g_j)_{B_i^{-1} \cdot B_j}(x) \d \mu(x) \nonumber \\ 
&= \sum_{i,j=1}^n \int_{\alpha_{B_i} \fix(\alpha_{B_i}^{-1} \circ \alpha_{B_j})} (g_i^* \ast g_j)_{B_i^{-1} \cdot B_j} \circ \alpha_{B_i}^{-1}(x) \d \mu(x),
\end{align}
where the last equality holds since $\mu$ is invariant. For $x\in \alpha_{B_i} \fix(\alpha_{B_i}^{-1} \circ \alpha_{B_j})$, we have $\alpha_{B_i}^{-1} \circ \alpha_{B_j} \circ \alpha_{B_i}^{-1}(x) = \alpha_{B_i}^{-1}(x)$, which implies that $x= \alpha_{B_j} \circ \alpha_{B_i}^{-1}(x)$. Hence, we conclude that $\alpha_{B_i} \fix(\alpha_{B_i}^{-1} \circ \alpha_{B_j})=\fix(\alpha_{B_j} \circ \alpha_{B_i}^{-1})$. Let $y: = \alpha_{B_i}^{-1}(x)$, then we have $y\in \s(B_j) \cap \s(B_i)$. Take the unique $\beta' \in B_j$ with $\s(\beta') = y$, and the unique $\alpha' \in B_i$ with $\s(\alpha') = y$. Hence, we have $\r(\alpha') = x$. As $x= \alpha_{B_j} (y)=\alpha_{B_j} (\s(\beta'))=\r(\beta')$, we obtain 
\[
(g_i^* \ast g_j)_{B_i^{-1} \cdot B_j} \circ \alpha_{B_i}^{-1} (x) = (g_i^* \ast g_j)_{B_i^{-1} \cdot B_j}(y) = \overline{g_i(\alpha')} \cdot g_j(\beta') = \overline{(g_i)^{B_i}(x)} \cdot (g_j)^{B_j}(x).
\]
From (\ref{EQ:trace check 3}) we obtain
\begin{align}\label{EQ:trace check 4}
\tau^\fix_\mu(a^*a) 
&= \sum_{i,j=1}^n \int_{\fix(\alpha_{B_j} \circ \alpha_{B_i}^{-1})} \overline{(g_i)^{B_i}(x)} \cdot (g_j)^{B_j}(x) \d \mu(x). 
\end{align}
By (\ref{EQ:trace check 2}) and (\ref{EQ:trace check 4}) we conclude $\tau^\fix_\mu(aa^*) = \tau^\fix_\mu(a^*a)$, as desired.

Now we aim to show that $\tau^\fix_\mu(aa^*) \geq 0$. By (\ref{EQ:trace check 2}) we have
\begin{align*}
\tau^\fix_\mu(aa^*) &= \sum_{i,j=1}^n \int_{\fix(\alpha_{B_j} \circ \alpha_{B_i}^{-1})} (g_j)^{B_j}(x) \cdot \overline{(g_i)^{B_i}(x)}\d \mu(x)\\
 &=  \int_{\Gz} \sum_{i,j=1}^n \chi_{B_{ij}}(x) \cdot (g_j)^{B_j}(x) \cdot \overline{(g_i)^{B_i}(x)}\d \mu(x),
\end{align*}
where $B_{ij}:=\{x \in \r(B_i) \cap \r(B_j): \alpha_{B_i}^{-1}(x) = \alpha_{B_j}^{-1}(x)\}$. 
Let us consider the function 
\[
F(x):= \sum_{i,j=1}^n \chi_{B_{ij}}(x) \cdot (g_j)^{B_j}(x) \cdot \overline{(g_i)^{B_i}(x)} \quad \text{for} \quad  x\in \Gz. 
\]
It suffices to show that $F(x) \geq 0$ for each $x\in \Gz$. Fix an $x\in \Gz$, and let $I:=\{i\in \{1,\dotsc,n\}: x \in \r(B_i)\}$.
Note that there is an equivalence relation on elements of $I$ given by $i \sim j$ if and only if $x \in B_{ij}$. Hence we can decompose $I=I_1 \sqcup I_2 \sqcup \dotsb \sqcup I_N$ such that for any $i,j \in I_k$ (where $k=1, \dotsc, N$) we have $\alpha_{B_i}^{-1}(x) = \alpha_{B_j}^{-1}(x)$, and for any $i \in I_k$ and $j \in I_l$ where $k \neq l$ and $k,l=1, \dotsc, N$ we have $\alpha_{B_i}^{-1}(x) \neq \alpha_{B_j}^{-1}(x)$. Hence, we have
\[
F(x) = \sum_{k=1}^N \sum_{i,j \in I_k} (g_j)^{B_j}(x) \cdot \overline{(g_i)^{B_i}(x)} = \sum_{k=1}^N \big| \sum_{i \in I_k} (g_i)^{B_i}(x) \big|^2 \geq 0,
\]
where we use the elementary calculation $|\sum_{i=1}^n \lambda_i|^2 = \sum_{i,j=1}^n \lambda_i\overline{\lambda_j}$ for $\lambda_1,\ldots,\lambda_n \in \CC$ for the second equality. 
So we have completed the proof.
\end{proof}

We also need the following property of $\tau^\fix_\mu$:

\begin{lem}\label{lem:tau fix app unit}
Let $\{u_i\}_{i \in I} \subseteq C_c(\Gz)$ be any approximate unit for $C_0(\Gz)$. Then we have $\lim_{i \in I} \tau_{\mu}^{\fix}(u_i) = 1$.
\end{lem}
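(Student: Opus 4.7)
The plan is to reduce the statement to a standard fact about integration against a Radon probability measure, using the earlier work already done. Since each $u_i$ lies in $C_c(\Gz) \subseteq C_c(\G)$, Lemma~\ref{lem:tau fix defined} gives immediately
\[
\tau^\fix_\mu(u_i) = \int_{\Gz} u_i \, \d\mu.
\]
So the statement reduces to showing that $\int_{\Gz} u_i \, \d\mu \to 1$ for any approximate unit $\{u_i\}_{i \in I}$ of $C_0(\Gz)$ and any probability Radon measure $\mu$ on $\Gz$.

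Since the $u_i$ can be taken with $0 \leq u_i \leq 1$, we automatically have $\int u_i \, \d\mu \leq \mu(\Gz) = 1$, so it suffices to prove $\liminf_i \int u_i \, \d\mu \geq 1$. I would fix $\varepsilon > 0$ and use inner regularity of the Radon probability measure $\mu$ to pick a compact set $K \subseteq \Gz$ with $\mu(K) > 1-\varepsilon$. By Urysohn's lemma in a locally compact Hausdorff space, there exists $f \in C_c(\Gz)$ with $0 \leq f \leq 1$ and $f|_K \equiv 1$; in particular $\int f \, \d\mu \geq \mu(K) > 1-\varepsilon$.

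The approximate unit property then gives $\|u_i f - f\|_\infty \to 0$, so a trivial estimate against the probability measure $\mu$ yields $\int u_i f \, \d\mu \to \int f \, \d\mu$. Combined with $u_i f \leq u_i$ (as $0 \leq f \leq 1$ and $u_i \geq 0$), this gives
\[
\liminf_i \int_{\Gz} u_i \, \d\mu \geq \liminf_i \int_{\Gz} u_i f \, \d\mu = \int_{\Gz} f \, \d\mu > 1-\varepsilon.
\]
Since $\varepsilon > 0$ was arbitrary, $\liminf_i \int u_i \, \d\mu \geq 1$, completing the argument. There is no real obstacle here; the only thing one has to be careful about is that the statement implicitly needs the hypothesis $0 \leq u_i \leq 1$ (or a uniform bound on $\|u_i\|$), which is standard for approximate units in a $C^*$-algebra.
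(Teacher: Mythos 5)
Your argument is correct, and it starts from the same reduction as the paper: both proofs invoke Lemma~\ref{lem:tau fix defined} to identify $\tau^\fix_\mu(u_i)$ with $\int_{\Gz} u_i \,\d\mu$. Where you diverge is in how you finish. The paper observes that $\tau^\fix_\mu|_{C_c(\Gz)}$ extends to the positive linear functional $f \mapsto \int_{\Gz} f \,\d\mu$ on $C_0(\Gz)$ and then cites in one line the standard $C^*$-algebraic fact that $\lim_i \varphi(u_i) = \|\varphi\|$ for a positive functional $\varphi$ and an approximate unit $(u_i)$, together with $\|\varphi\| = \mu(\Gz) = 1$. You instead unwind that abstract fact into an explicit measure-theoretic computation: inner regularity to find a compact $K$ with $\mu(K) > 1-\varepsilon$, Urysohn to produce $f \in C_c(\Gz)$ with $f|_K \equiv 1$, and the approximate unit property in sup norm to get $\int u_i f \,\d\mu \to \int f \,\d\mu$, followed by the monotonicity $u_i f \le u_i$. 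Your route is more self-contained and makes visible exactly where regularity of $\mu$ and the bound $0 \le u_i \le 1$ enter, at the cost of length; the paper's route is shorter but leans on the reader knowing the norm-of-a-positive-functional fact. Your closing caveat is also apt: both proofs implicitly use the standard convention that an approximate unit consists of positive elements of norm at most one, and the claim genuinely needs some such uniform bound.
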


\begin{proof}
By Lemma \ref{lem:tau fix defined}, $\tau_{\mu}^{\fix}|_{C_c(\Gz)}$ can be extended to a tracial state on $C_0(\Gz)$ (with the same notation). Hence, we have $\lim_{i \in I} \tau^{\fix}_\mu(u_i) = \|\tau_{\mu}^{\fix}|_{C_0(\Gz)}\| = 1$ (see, \emph{e.g.}, \cite[Theorem 3.3.3]{Mur90}).
\end{proof}

Using the GNS construction together with Lemma \ref{lem:tau fix app unit}, we obtain the following:

\begin{cor}\label{cor:tau fix extended to Cmax}
The map $\tau^\fix_\mu: C_c(\G) \to \CC$ defined in Definition~\ref{defn:fix trace} can be extended to a tracial state on $C^*(\G)$, still denoted by $\tau^\fix_\mu$.
\end{cor}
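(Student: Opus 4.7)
The approach is to realise $\tau^\fix_\mu$ as a vector state of a $\ast$-representation of $C^*(\G)$ obtained via the GNS construction applied to the Banach $\ast$-algebra $L^1(\G)$. The first ingredient is the closed-form expression
\begin{equation*}
\tau^\fix_\mu(a) \;=\; \int_{\Gz} \Big(\sum_{\gamma \in \G^{x}_{x}} a(\gamma)\Big)\, \d\mu(x), \qquad a \in C_c(\G).
\end{equation*}
This is obtained by decomposing $a = \sum_i g_i$ via a partition of unity subordinate to a cover of $\supp a$ by pre-compact open bisections $B_i$, applying the definition of $\tau^\fix_\mu$ piecewise, and observing that at each $x \in \Gz$ the $i$-th integrand contributes $g_i(\gamma)$ for the unique $\gamma \in B_i \cap \G^{x}_{x}$ (when such a $\gamma$ exists) and $0$ otherwise; reassembling the double sum recovers $a(\gamma)$ on the isotropy fibre $\G^{x}_{x}$. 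As a direct consequence, $|\tau^\fix_\mu(a)| \leq \|a\|_I$, so $\tau^\fix_\mu$ extends by continuity to a bounded positive linear functional on $L^1(\G)$ (positivity of the extension follows from Lemma~\ref{lem:tau fix is a trace} together with the fact that the involution is isometric and multiplication is jointly continuous with respect to $\|\cdot\|_I$).

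Next I would run the standard GNS construction: set $N := \{a \in L^1(\G) : \tau^\fix_\mu(a^*a) = 0\}$, let $H_\tau$ be the completion of $L^1(\G)/N$ with respect to $\langle [a], [b] \rangle := \tau^\fix_\mu(b^* a)$, and define $\pi_\tau(a)[b] := [ab]$. Because $L^1(\G)$ has isometric involution and admits the bounded approximate identity $\{u_i\} \subseteq C_c(\Gz)$ (note that $\|u\|_I = \|u\|_\infty$ for $u \in C_c(\Gz)$), the classical GNS theorem for Banach $\ast$-algebras produces a contractive $\ast$-representation $\pi_\tau : L^1(\G) \to \B(H_\tau)$. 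A cyclic vector $\xi_\tau$ is extracted as the norm-limit $\lim_i [u_i]$, whose existence follows from the Cauchy estimate
\[
\|[u_i] - [u_j]\|^2 \;=\; \tau^\fix_\mu\big((u_i - u_j)^2\big)
\]
combined with Lemma~\ref{lem:tau fix app unit} (each of the three resulting terms tends to $1$ by monotone convergence); a short calculation using the trace property and the approximate-unit axioms then yields $\tau^\fix_\mu(a) = \langle \pi_\tau(a) \xi_\tau, \xi_\tau \rangle$ for all $a \in C_c(\G)$.

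Finally, since $\pi_\tau$ is contractive as a $\ast$-representation of $L^1(\G)$, the very definition of $\|\cdot\|_{\max}$ yields $\|\pi_\tau(a)\| \leq \|a\|_{\max}$, so $\pi_\tau$ descends to a $\ast$-representation of $C^*(\G)$, and the formula $\tilde{\tau}(a) := \langle \pi_\tau(a) \xi_\tau, \xi_\tau \rangle$ defines a state on $C^*(\G)$ agreeing with $\tau^\fix_\mu$ on $C_c(\G)$. The trace identity $\tilde{\tau}(xy) = \tilde{\tau}(yx)$ transfers from Lemma~\ref{lem:tau fix is a trace} by continuity and density. The main technical obstacle is the boundedness (and contractivity) of $\pi_\tau$, which we delegate to the classical theory of Banach $\ast$-algebras with bounded approximate identity; if preferred, one can obtain it directly by decomposing $a = \sum_i g_i$ with $g_i \in C_c(B_i)$ for pre-compact open bisections, exploiting that $g_i^* g_i \in C_0(\Gz)_{+}$ so that $\tau^\fix_\mu(b^* g_i^* g_i b) \leq \|g_i\|_\infty^2 \, \tau^\fix_\mu(b^* b)$, and then assembling via the triangle inequality.
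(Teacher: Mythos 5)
Your proposal is correct and follows essentially the same route as the paper: both run the GNS construction on the sesquilinear form $(a,b)\mapsto\tau^\fix_\mu(b^*a)$ and extract the cyclic vector as the limit of an approximate unit $\{u_i\}\subseteq C_c(\Gz)$ via Lemma~\ref{lem:tau fix app unit}. The one genuine addition is your closed-form identity $\tau^\fix_\mu(a)=\int_{\Gz}\sum_{\gamma\in\G^x_x}a(\gamma)\,\d\mu(x)$ (which is in substance Lemma~\ref{lem:measurable fields for tau fix}, proved later in the paper), used to get $|\tau^\fix_\mu(a)|\leq\|a\|_I$ and hence contractivity of the GNS representation on $L^1(\G)$; this makes explicit the continuity that the paper's phrase ``by definition, $\pi$ can be extended to $C^*(\G)$'' leaves implicit, since $\|\cdot\|_{\max}$ is defined as a supremum over contractive representations of $L^1(\G)$.
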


\begin{proof}
Define $(\cdot, \cdot)$ on $C_c(\G)$ by $(a,b):=\tau^\fix_\mu(b^*a)$, and set $N:=\{a \in C_c(\G): (a,a) = 0\}$. Set $\langle \cdot, \cdot \rangle$ on $C_c(\G) / N$ by $\langle [a], [b] \rangle := (a,b)$ for $a,b \in C_c(\G)$, which is well-defined. Then let $\H$ be the completion of $C_c(\G) / N$ with respect to $\langle \cdot, \cdot \rangle$, which is a Hilbert space. Also, define the $\ast$-representation $\pi: C_c(\G) \to \B(\H)$ by $\pi(a)([b]):=[ab]$, where $a,b \in C_c(\G)$. By definition, $\pi$ can be extended to a $\ast$-representation (with the same notation) $\pi: C^*(\G) \to \B(\H)$. For each compact $K \subseteq \Gz$, choose $\rho_K \in C_c(\Gz)$ with range in $[0,1]$ and $\rho_K|_K \equiv 1$. Then $\{\rho_K: K \text{ is a  compact subset of }\Gz\}$ forms an approximate unit for $C_0(\Gz)$. 

We claim that $\{[\rho_K^{1/2}]\}_{K}$ is a Cauchy net in $\H$. In fact, we have
\begin{align*}
\left\|[\rho_K^{1/2}] - [\rho_L^{1/2}]\right\|^2_\H &= \tau^\fix_\mu\left((\rho_K^{1/2} - \rho_L^{1/2})(\rho_K^{1/2} - \rho_L^{1/2})\right) \\
&= \tau^\fix_\mu(\rho_K) + \tau^\fix_\mu(\rho_L) - 2\tau^\fix_\mu(\rho_K^{1/2}\rho_L^{1/2}).
\end{align*}
Taking limits for $K$ and $L$, Lemma \ref{lem:tau fix app unit} implies that $\tau^\fix_\mu(\rho_K) \to 1$ and $\tau^\fix_\mu(\rho_L) \to 1$. Moreover, Lemma \ref{lem:tau fix defined} shows that
\[
\tau^\fix_\mu(\rho_K^{1/2}\rho_L^{1/2}) = \int_{\Gz} \rho_K^{1/2}\rho_L^{1/2} \d \mu \to 1
\]
since $\mu$ is a probability Radon measure. 
This concludes the proof that $\{[\rho_K^{1/2}]\}_{K}$ is a Cauchy net in $\H$, and hence converges to a unit vector $\xi \in \H$. For $a \in C^*(\G)$, we define $\tau^{\fix}_\mu(a) :=\langle \pi(a) \xi , \xi \rangle$. It is routine to check that $\tau^\fix_\mu$ is a tracial state on $C^*(\G)$ which extends $\tau^\fix_\mu: C_c(\G) \to \CC$.
\end{proof}

The following lemma is a crucial step towards an answer to Question \ref{Ques:converse}:

\begin{lem}\label{lem:tau fix char strong eff.}
Let $\G$ be a locally compact Hausdorff and \'{e}tale groupoid, and $\mu$ be an invariant probability Radon measure on $\Gz$. Then the following are equivalent:
\begin{enumerate}
 \item $\G$ is essentially free with respect to $\mu$;
 \item $\tau^\fix_\mu(a) = \tau_\mu(a)$ for any $a\in C_c(\G)$.
\end{enumerate}
\end{lem}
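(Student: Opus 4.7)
The plan is to compare the two linear functionals on $C_c(\G)$ directly, using the fact that $\Gz$ is clopen in $\G$ by \'etaleness. The first observation I would make is that for any $a \in C_c(\G)$, the formula $\lambda_{u}(a)\delta_{u}(\gamma) = a(\gamma)$ for $\gamma \in \G_u$ (which follows immediately from the convolution formula and $\delta_u(\alpha) = 1 \iff \alpha = u$) gives $E(a)(u) = a(u)$ for every $u \in \Gz$. Consequently
\[
\tau_\mu(a) = \int_{\Gz} a|_{\Gz} \, \d\mu \qquad \text{for every } a\in C_c(\G),
\]
and in particular $\tau_\mu$ vanishes on $C_c(\G \setminus \Gz)$. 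Combined with Lemma \ref{lem:tau fix defined}, which tells us that $\tau^\fix_\mu$ and $\tau_\mu$ already coincide on $C_c(\Gz)$, the proof reduces to understanding $\tau^\fix_\mu$ on $C_c(\G \setminus \Gz)$.

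For the direction $(1) \Rightarrow (2)$, I would take $a \in C_c(\G \setminus \Gz)$. Since $\G \setminus \Gz$ is open and $\supp a$ is compact, I can cover $\supp a$ by finitely many pre-compact open bisections $B_i \subseteq \G \setminus \Gz$ and use a partition of unity to write $a = \sum a_i$ with $a_i \in C_c(B_i)$. Then by definition of $\tau^\fix_\mu$ and essential freeness,
\[
\tau^\fix_\mu(a_i) = \int_{\fix(\alpha_{B_i})} (a_i)_{B_i} \, \d\mu = 0,
\]
since $\mu(\fix(\alpha_{B_i})) = 0$. Summing over $i$ yields $\tau^\fix_\mu(a) = 0 = \tau_\mu(a)$, and combined with the agreement on $C_c(\Gz)$ this gives (2).

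For the direction $(2) \Rightarrow (1)$, I would argue by contrapositive. Suppose $B \subseteq \G \setminus \Gz$ is a pre-compact open bisection with $\mu(\fix(\alpha_B)) > 0$. By inner regularity of the Radon measure $\mu$ applied to the measurable set $\fix(\alpha_B) \subseteq \s(B)$, there exists a compact $K \subseteq \fix(\alpha_B)$ with $\mu(K) > 0$. Urysohn's lemma inside the locally compact Hausdorff space $\s(B)$ produces a nonnegative $f \in C_c(\s(B))$ with $f \equiv 1$ on $K$. Pulling $f$ back along the homeomorphism $\s|_B$ gives a function $a \in C_c(B)$ (extended by zero to $\G$, which is continuous because $\supp a$ is compact and contained in the open set $B$). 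By construction $a_B = f$, so
\[
\tau^\fix_\mu(a) = \int_{\fix(\alpha_B)} f \, \d\mu \geq \mu(K) > 0,
\]
while $\tau_\mu(a) = \int_{\Gz} a|_{\Gz}\, \d\mu = 0$ because $\supp a \subseteq B \subseteq \G \setminus \Gz$. This contradicts (2) and completes the proof.

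The argument is essentially bookkeeping once the identification $E(a)|_{\Gz} = a|_{\Gz}$ for $a \in C_c(\G)$ and the clopenness of $\Gz$ are in place; there is no serious obstacle. The only small technical points to verify carefully are the continuity of the zero-extension from $C_c(B)$ to $C_c(\G)$ (handled by compactness of the support inside the open bisection $B$) and the measurability of $\fix(\alpha_B)$ needed to invoke inner regularity, which is already noted in the excerpt just after equation (\ref{EQ:fixed points}).
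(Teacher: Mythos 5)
Your proof is correct and follows essentially the same route as the paper: $(1)\Rightarrow(2)$ by reducing, via clopenness of $\Gz$ and a partition of unity, to functions supported on pre-compact open bisections in $\G\setminus\Gz$ where $\mu(\fix(\alpha_{B_i}))=0$, and $(2)\Rightarrow(1)$ by the contrapositive using a nonnegative test function supported on a bisection whose fixed-point set has positive measure. Your use of inner regularity and Urysohn's lemma to obtain a function equal to $1$ on a compact subset of $\fix(\alpha_B)$ of positive measure is in fact a slightly more careful rendering of the paper's choice of a test function $g\in C_c(B)$ with $g>0$.
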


\begin{proof}
``(1) $\Rightarrow$ (2)'' follows directly from definitions. For ``(2) $\Rightarrow$ (1)'': If not, then there exists a pre-compact open bisection $B \subseteq \G \setminus \Gz$ such that $\mu(\fix(\alpha_B)) \neq 0$. Then for any $g \in C_c(B)$ with $g>0$, we have $\tau^\fix_\mu(g) = \int_{\fix(\alpha_B)} g_B \d\mu >0$, while $\tau_\mu(g) = \int_{\Gz} E(g) \d \mu = 0$. This leads to a contradiction.
\end{proof}

By Proposition \ref{prop:ess. free implies unique trace} and Lemma \ref{lem:tau fix char strong eff.} we obtain the following:

\begin{cor}\label{cor:criterion for standard}
Let $\G$ be a locally compact Hausdorff and \'{e}tale groupoid with a $C^*$-norm $\|\cdot\|_\nu$ on $C_c(\G)$ dominating the reduced $C^*$-norm, and $\mu$ be an invariant probability Radon measure on $\Gz$. Assume that $\tau_\mu = \tau^\fix_\mu$ on $C_c(\G)$. Then any tracial state on $C^*_\nu(\G)$ with the associated measure being $\mu$ is canonical. 
\end{cor}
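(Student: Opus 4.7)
The proof plan is essentially a two-step combination of results already established in the section. The assumption is that $\tau_\mu = \tau^\fix_\mu$ on $C_c(\G)$, and the goal is to show that any tracial state $\tau$ on $C^*_\nu(\G)$ with associated measure $\mu$ satisfies $\tau = \tau|_{C_0(\Gz)} \circ E$.

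First, I would invoke Lemma~\ref{lem:tau fix char strong eff.} in the direction (2) $\Rightarrow$ (1). The hypothesis $\tau_\mu = \tau^\fix_\mu$ on $C_c(\G)$ is precisely condition (2) of that lemma, so we immediately conclude that $\G$ is essentially free with respect to $\mu$ in the sense of Definition~\ref{defn:ess free}.

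Second, I would apply Proposition~\ref{prop:ess. free implies unique trace} to the tracial state $\tau$. Since $\G$ is essentially free with respect to $\mu$ and $\mu$ is the measure associated to $\tau$, the proposition delivers exactly the conclusion that $\tau$ is canonical, namely $\tau = \tau_\mu$, which by the definition of $\tau_\mu$ (cf.\ (\ref{EQ:tau mu max})) means $\tau = \tau|_{C_0(\Gz)} \circ E \circ q_\nu$, i.e.\ $\tau$ factors through the canonical conditional expectation.

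There is no genuine obstacle in this corollary; it is a formal chaining of Lemma~\ref{lem:tau fix char strong eff.} and Proposition~\ref{prop:ess. free implies unique trace}. The substance of the argument lives entirely in those two earlier results: Lemma~\ref{lem:tau fix char strong eff.} translates the equality of the two traces on $C_c(\G)$ into the measure-theoretic statement of essential freeness, and Proposition~\ref{prop:ess. free implies unique trace} runs the Cauchy--Schwarz-plus-cutoff argument on bisections to show that essential freeness forces $\tau$ to vanish on $C_c(\G \setminus \Gz)$, which by Lemma~\ref{lem:char for standard} means $\tau$ is canonical.
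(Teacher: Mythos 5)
Your proof is correct and follows exactly the paper's route: the corollary is obtained by combining Lemma~\ref{lem:tau fix char strong eff.} (in the direction (2) $\Rightarrow$ (1)) with Proposition~\ref{prop:ess. free implies unique trace}, which is precisely what the paper does.
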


Finally, we obtain the following answer to Question \ref{Ques:converse}:

\begin{thm}\label{thm:char for ess free max case}
Let $\G$ be a locally compact Hausdorff and \'{e}tale groupoid, and $\mu$ be an invariant probability Radon measure on $\Gz$. Then the following are equivalent:
\begin{enumerate}
 \item $\G$ is essentially free with respect to $\mu$;
 \item For any $C^*$-norm $\|\cdot\|_\nu$ on $C_c(\G)$ dominating the reduced $C^*$-norm, any tracial state $\tau$ on $C^*_\nu(\G)$ with the associated measure being $\mu$ is canonical and  $\tau^\fix_\mu$ can be extended to a tracial state on $C^*_\nu(\G)$; 
 \item There exists a $C^*$-norm $\|\cdot\|_\nu$ on $C_c(\G)$ dominating the reduced $C^*$-norm such that any tracial state $\tau$ on $C^*_\nu(\G)$ with the associated measure being $\mu$ is canonical and $\tau^\fix_\mu$ can be extended to a tracial state on $C^*_\nu(\G)$; 
 \item Any tracial state $\tau$ on $C^*(\G)$ with the associated measure being $\mu$ is canonical.
\end{enumerate}
\end{thm}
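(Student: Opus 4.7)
The plan is to prove the equivalence via the cycle $(1)\Rightarrow(2)\Rightarrow(3)\Rightarrow(1)$ together with $(1)\Rightarrow(4)\Rightarrow(1)$. All of the heavy lifting has already been done in Proposition~\ref{prop:ess. free implies unique trace}, Corollary~\ref{cor:tau fix extended to Cmax}, and Lemma~\ref{lem:tau fix char strong eff.}, so what remains is essentially bookkeeping; the fixed point trace $\tau^{\fix}_\mu$ will serve as the universal witness bridging essential freeness and the canonicity of tracial states.

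For $(1)\Rightarrow(2)$, fix any $C^*$-norm $\|\cdot\|_\nu$ dominating the reduced one. Canonicity of any tracial state with associated measure $\mu$ is exactly the content of Proposition~\ref{prop:ess. free implies unique trace}. For the extendability of $\tau^{\fix}_\mu$, Lemma~\ref{lem:tau fix char strong eff.} yields $\tau^{\fix}_\mu=\tau_\mu$ on $C_c(\G)$, and the right-hand side is the restriction to $C_c(\G)$ of the tracial state on $C^*_\nu(\G)$ defined by formula~(\ref{EQ:tau mu max}); this gives the required extension. The step $(2)\Rightarrow(3)$ is immediate, and $(1)\Rightarrow(4)$ is simply Proposition~\ref{prop:ess. free implies unique trace} applied to $\|\cdot\|_\nu=\|\cdot\|_{\max}$.

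The core of the argument is then the common template for the two converse directions $(3)\Rightarrow(1)$ and $(4)\Rightarrow(1)$. In each case, start with an extension of $\tau^{\fix}_\mu$ to a tracial state on the relevant groupoid $C^*$-algebra: for $(3)$ such an extension on $C^*_\nu(\G)$ is provided by hypothesis, while for $(4)$ an extension on $C^*(\G)$ is granted by Corollary~\ref{cor:tau fix extended to Cmax}. Lemma~\ref{lem:tau fix defined} shows that in either case the associated measure of the extension is $\mu$, so the canonicity hypothesis of $(3)$ or $(4)$ applies and forces the extended tracial state to coincide with $\tau_\mu$ on the respective $C^*$-algebra. Restricting back to $C_c(\G)$ yields $\tau^{\fix}_\mu=\tau_\mu$ on $C_c(\G)$, at which point Lemma~\ref{lem:tau fix char strong eff.} delivers $(1)$.

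There is no genuine obstacle here, since the technical content has been isolated in the preceding lemmas. The only point requiring a brief verification, rather than a serious argument, is that any extension of $\tau^{\fix}_\mu$ to a tracial state on $C^*_\nu(\G)$ or $C^*(\G)$ has associated measure $\mu$; this follows directly from Lemma~\ref{lem:tau fix defined} together with the density of $C_c(\Gz)$ in $C_0(\Gz)$.
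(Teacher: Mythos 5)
Your proposal is correct and follows essentially the same route as the paper: $(1)\Rightarrow(2)$ via Proposition~\ref{prop:ess. free implies unique trace} and Lemma~\ref{lem:tau fix char strong eff.}, and both converse directions by feeding the extension of $\tau^{\fix}_\mu$ (from the hypothesis in case $(3)$, from Corollary~\ref{cor:tau fix extended to Cmax} in case $(4)$) back into the canonicity assumption and invoking Lemma~\ref{lem:tau fix defined} and Lemma~\ref{lem:tau fix char strong eff.}. The only cosmetic difference is that you route $(4)$ through $(1)$ directly rather than noting $(2)\Rightarrow(4)$ is trivial, which changes nothing of substance.
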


\begin{proof}
``(1) $\Rightarrow$ (2)'': The first half of the sentence follows from Proposition \ref{prop:ess. free implies unique trace}. For the second statement, Lemma \ref{lem:tau fix char strong eff.} shows that $\tau^\fix_\mu(a) = \tau_\mu(a)$ for $a \in C_c(\G)$. Since $\tau_\mu$ can be extended to a tracial state on $C^*_\nu(\G)$, we conclude (2).

``(2) $\Rightarrow$ (3)'' and ``(2) $\Rightarrow$ (4)'' are trivial.

``(3) $\Rightarrow$ (1)'': By (3), $\tau^\fix_\mu$ can be extended to a tracial state on $C^*_\nu(\G)$. We note that the associated measure of $\tau^\fix_\mu$ is $\mu$ by Lemma \ref{lem:tau fix defined}. It follows from (3) again that $\tau^\fix_\mu = \tau_\mu$. Finally, we conclude (1) by Lemma \ref{lem:tau fix char strong eff.}.

``(4) $\Rightarrow$ (1)'': By Corollary \ref{cor:tau fix extended to Cmax}, $\tau^\fix_\mu$ can be extended to a tracial state on $C^*(\G)$. We note that the associated measure of $\tau^\fix_\mu$ is $\mu$ by Lemma \ref{lem:tau fix defined}. From the assumption (4), we obtain $\tau^\fix_\mu = \tau_\mu$. Finally, we conclude (1) by Lemma \ref{lem:tau fix char strong eff.}.
\end{proof}

%
%

Recall that a groupoid $\G$ has the \emph{weak containment property} if $C^*(\G) \cong C^*_r(\G)$ canonically\footnote{Amenable groupoids have the weak containment property, but there are also non-amenable groupoids with the weak containment property \cite{MR3549528}.}. In this case, we obtain the following directly from Corollary \ref{cor:tau fix extended to Cmax} and Theorem \ref{thm:char for ess free max case}:

\begin{cor}\label{cor:WCP to ess free}
Let $\G$ be a locally compact Hausdorff and \'{e}tale groupoid with the weak containment property, and $\mu$ be an invariant probability Radon measure on $\Gz$. Then the following are equivalent:
\begin{enumerate}
 \item $\G$ is essentially free with respect to $\mu$;
 \item Any tracial state $\tau$ on $C^*_r(\G)$ with the associated measure being $\mu$ is canonical.
\end{enumerate}
\end{cor}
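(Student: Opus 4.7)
The plan is to deduce Corollary~\ref{cor:WCP to ess free} directly from Theorem~\ref{thm:char for ess free max case}, using the weak containment property to collapse the distinction between the maximal and reduced groupoid $C^*$-algebras.

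For the implication (1) $\Rightarrow$ (2), I would simply quote Proposition~\ref{prop:ess. free implies unique trace} applied to the reduced $C^*$-norm $\|\cdot\|_r$ on $C_c(\G)$ (which trivially dominates itself). This immediately gives that any tracial state $\tau$ on $C^*_r(\G)$ whose associated measure on $\Gz$ is $\mu$ must be canonical. Alternatively, this is the ``(1) $\Rightarrow$ (2)'' half of Theorem~\ref{thm:char for ess free max case}.

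For the converse (2) $\Rightarrow$ (1), the key observation is that the weak containment property provides a canonical $\ast$-isomorphism $q_{\max}:C^*(\G) \xrightarrow{\cong} C^*_r(\G)$ which is the identity on $C_c(\G)$. Consequently, every tracial state $\tau$ on $C^*(\G)$ is of the form $\tau' \circ q_{\max}$ for a unique tracial state $\tau'$ on $C^*_r(\G)$, and $\tau$ has associated measure $\mu$ if and only if $\tau'$ does. Moreover, since $q_{\max}$ restricts to the identity on $C_c(\G)$ (and hence on $C_0(\Gz)$), and since both conditional expectations $E\circ q_{\max}$ and $E$ agree with restriction to $\Gz$ on $C_c(\G)$ via the map $j$, one sees that $\tau$ is canonical on $C^*(\G)$ precisely when $\tau'$ is canonical on $C^*_r(\G)$. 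Hence assumption (2) of the corollary is exactly equivalent to condition (4) of Theorem~\ref{thm:char for ess free max case}, which yields essential freeness of $\G$ with respect to $\mu$.

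There is no real obstacle here, since all of the work has already been done in Theorem~\ref{thm:char for ess free max case}; the only point requiring any care is verifying that ``canonical'' is preserved under the isomorphism $q_{\max}$, but this is immediate from the identifications $C_0(\Gz) \hookrightarrow C^*(\G)$ and $C_0(\Gz) \hookrightarrow C^*_r(\G)$ both factoring through $C_c(\Gz) \subseteq C_c(\G)$, together with the fact that $q_{\max}$ intertwines the two canonical conditional expectations onto $C_0(\Gz)$.
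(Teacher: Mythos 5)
Your proposal is correct and follows exactly the route the paper intends: the corollary is stated as an immediate consequence of Theorem~\ref{thm:char for ess free max case}, with the weak containment property used to identify tracial states on $C^*(\G)$ with those on $C^*_r(\G)$ via $q_{\max}$ so that condition (2) of the corollary becomes condition (4) of the theorem. Your additional care in checking that ``canonical'' and ``associated measure'' are preserved under $q_{\max}$ (since it is the identity on $C_c(\G)$ and intertwines the conditional expectations) is exactly the right verification, even though the paper leaves it implicit.
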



\section{Extension of $\tau^\fix_\mu$}\label{sec:reduction}

This section is devoted to discussing when the fixed point trace $\tau^\fix_\mu$ introduced in Section \ref{sec:trace} can be extended to a tracial state on the reduced groupoid $C^*$-algebra $C^*_r(\G)$. For future use, we need a decomposition formula for general tracial states essentially from \cite{CN22, Nes13}.


Let $\mu$ be a probability Radon measure on $\Gz$. Assume that for $\mu$-almost everywhere $x\in \Gz$, we are given a state $\tau_x$ on $C^*(\G_x^x)$. Denote the canonical unitary generators of $C^*(\G_x^x)$ by $u_\gamma$, for $\gamma \in \G_x^x$. Recall that the field of states $\{\tau_x\}_{x\in \Gz}$ is \emph{$\mu$-measurable} if for any $f \in C_c(\G)$, the function 
\[
\Gz \to \CC, \quad x \mapsto \sum_{\gamma \in \G^x_x} f(\gamma) \tau_x(u_\gamma)
\]
is $\mu$-measurable. 

We recall the following special case of \cite[Theorem 1.1 and Theorem 1.3]{Nes13}:

\begin{prop}[\cite{Nes13}]\label{prop:Nes13 prop 1.1}
Let $\G$ be a locally compact Hausdorff and \'{e}tale groupoid which is second-countable, and $\tau$ be a tracial state on $C^*(\G)$ with the associated measure $\mu$. Then there exists a unique $\mu$-measurable field $\{\tau_x\}_{x\in \Gz}$, where $\tau_x$ is a tracial state on $C^*(\G_x^x)$ such that
\begin{equation}\label{EQ:trace dec}
\tau(f) = \int_{\Gz} \sum_{\gamma \in \G_x^x} f(\gamma)\tau_x(u_\gamma) \d \mu(x) \quad \text{for} \quad f\in C_c(\G). 
\end{equation}
\end{prop}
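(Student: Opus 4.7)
I would follow a GNS-plus-disintegration strategy in the spirit of Neshveyev. First, apply the GNS construction to $\tau$ to obtain a cyclic $\ast$-representation $(\H_\tau, \pi_\tau, \xi_\tau)$ of $C^*(\G)$ with $\tau(a) = \langle \pi_\tau(a)\xi_\tau, \xi_\tau\rangle$. Since $\tau$ is tracial, it extends to a faithful normal tracial state on $M := \pi_\tau(C^*(\G))''$. The canonical inclusion $C_0(\Gz) \hookrightarrow C^*(\G)$ yields an abelian von Neumann subalgebra $A := \pi_\tau(C_0(\Gz))''$ of $M$; since $\G$ is second countable and $\tau|_{C_0(\Gz)}$ corresponds to the probability Radon measure $\mu$, we may identify $A \cong L^\infty(\Gz, \mu)$.

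Second, I would disintegrate $\H_\tau$ relative to $A$: there exist a measurable field $\{\H_x\}_{x\in \Gz}$ of Hilbert spaces and a measurable field $\{\xi_x\}$ of vectors such that
\[
\H_\tau \cong \int^{\oplus}_{\Gz} \H_x\, \d\mu(x), \qquad \xi_\tau \cong \int^{\oplus}_{\Gz} \xi_x\, \d\mu(x),
\]
with $A$ acting by pointwise multiplication. The key step is then to construct, for $\mu$-a.e.\ $x$, a unitary representation of the isotropy group $\G_x^x$ on $\H_x$. Given a pre-compact open bisection $B$ and $\gamma \in B$ lying in the isotropy (so $x := \s(\gamma) = \r(\gamma) \in \fix(\alpha_B)$), I would pick a shrinking family of bumps $g_n \in C_c(B)$ with $g_n(\gamma) = 1$ and show that $\pi_\tau(g_n)$, multiplied by suitable cutoffs from $A$, converges strongly to a decomposable operator whose fiber at $x$ is a unitary $U_\gamma^x$ on $\H_x$, independent of the chosen $B$. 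Setting $\tau_x(u_\gamma) := \langle U_\gamma^x \xi_x, \xi_x\rangle$ then yields states $\tau_x$ on $C^*(\G_x^x)$ forming a $\mu$-measurable field. The disintegration formula is verified by writing any $f \in C_c(\G)$ as a finite sum of bisection-supported pieces via a partition of unity and computing $\langle \pi_\tau(f)_x \xi_x, \xi_x\rangle$ fiberwise: the fiber operator contributes nontrivially only at $x \in \fix(\alpha_B)$, where its value is $f(\gamma) \tau_x(u_\gamma)$ for the unique $\gamma \in B \cap \G_x^x$ with $\r(\gamma) = x$. Uniqueness of $\{\tau_x\}$ follows from uniqueness of the disintegration combined with density of $C_c(\G)$ in $C^*(\G)$.

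The main obstacle is the key construction above: one must verify that the fiberwise unitaries $U_\gamma^x$ exist for $\mu$-a.e.\ $x$, are independent of the bisection $B$ containing $\gamma$, and assemble into a genuine unitary representation of $\G_x^x$ on $\H_x$ (with composition preserved almost everywhere). Second countability is essential here, since it provides a countable base of pre-compact open bisections covering $\iso(\G)$, so that all the compatibilities and the associated $\mu$-null exceptional sets can be handled simultaneously by removing a single null set from $\Gz$.
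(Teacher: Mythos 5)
The paper does not actually prove this proposition: it is quoted as a special case of \cite[Theorem 1.1]{Nes13}, and Remark~\ref{rem:2nd countable} records that the proof there rests on Renault's disintegration theorem. Your plan --- GNS construction, identification of $\pi_\tau(C_0(\Gz))''$ with $L^\infty(\Gz,\mu)$, direct-integral decomposition of $\H_\tau$, and definition of $\tau_x$ via fiberwise unitaries applied to the components of the cyclic vector --- is essentially the same strategy as the cited proof, so in outline you are on the right track.

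The problem is that the one step you label ``the main obstacle'' is the entire mathematical content of the statement, and as written it does not go through. For $g$ supported on a pre-compact open bisection $B$, the operator $\pi_\tau(g)$ is \emph{not} decomposable with respect to $A\cong L^\infty(\Gz,\mu)$: by Lemma~\ref{lem:easy calculation 0} it satisfies $\pi_\tau(g)\pi_\tau(h)=\pi_\tau(g\cdot(h\circ \s))$ while $\pi_\tau(h)\pi_\tau(g)=\pi_\tau((h\circ \r)\cdot g)$, so it only intertwines the fibers over $\s(B)$ with those over $\r(B)$ and merely \emph{asymptotically} commutes with $A$ as $\supp g_n$ shrinks to a single isotropy element $\gamma$. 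You assert that $\pi_\tau(g_n)$ (after cutoffs) converges strongly to a decomposable operator with unitary fibers $U^x_\gamma$, but you give no argument for the existence of this limit, its independence of the approximating sequence and of the bisection $B$, its unitarity on $\H_x$, or the multiplicativity $U^x_\gamma U^x_{\gamma'}=U^x_{\gamma\gamma'}$ $\mu$-a.e.; these are exactly the assertions that Renault's disintegration theorem is designed to deliver, and they are not routine (a priori one only gets weak-$*$ cluster points in $M$, not a canonical strong limit). The honest options are either to invoke Renault's theorem to disintegrate the GNS representation into a measurable representation of $\G$ on a measurable field of Hilbert spaces and then restrict to the isotropy groups --- which is what \cite{Nes13} does --- or to supply a genuine proof of your fiberwise construction, which would amount to reproving that theorem. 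As a smaller point, uniqueness is more directly obtained from formula (\ref{EQ:trace dec}) itself: testing against $g\in C_c(B)$ for a countable base of bisections covering $\iso(\G)$ pins down $\tau_x(u_\gamma)$ for $\mu$-a.e.\ $x$ without appealing to uniqueness of the direct-integral decomposition.
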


In the case of $\tau^\fix_\mu$, we can directly calculate its associated measurable field of states as follows. 

\begin{lem}\label{lem:measurable fields for tau fix}
Let $\G$ be a locally compact Hausdorff and \'{e}tale groupoid, and $\mu$ be an invariant probability Radon measure on $\Gz$. For each $x\in \Gz$, we denote by $\tau_x^{\triv}: C_c(\G_x^x) \to \CC$ the trivial representation of $\G_x^x$. Then for every $f \in C_c(\G)$, we have
\begin{equation}\label{EQ:trace dec tau fix}
\tau^\fix_\mu(f) = \int_{\Gz} \tau^\triv_{x}(\eta_x(f)) \d \mu(x),
\end{equation}
where $\eta_x: C_c(\G) \to C_c(\G_x^x), f \mapsto f|_{\G_x^x}$ is the restriction map for each $x\in \Gz$.
\end{lem}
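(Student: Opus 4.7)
The plan is to unpack both sides of \eqref{EQ:trace dec tau fix} using the definitions and to reduce everything to the bisection case where the identifications are completely transparent.

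First I would observe that for every $x \in \Gz$ the trivial representation satisfies $\tau^\triv_x(u_\gamma) = 1$ for all $\gamma \in \G_x^x$, so by linearity
\[
\tau^\triv_x(\eta_x(f)) = \sum_{\gamma \in \G_x^x} f(\gamma)
\]
for any $f \in C_c(\G)$. Thus the statement to prove becomes
\[
\tau^\fix_\mu(f) = \int_{\Gz} \sum_{\gamma \in \G_x^x} f(\gamma)\, \d\mu(x).
\]

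Since both sides are linear in $f$, and since Definition~\ref{defn:fix trace} together with the standard partition-of-unity argument (as used in Lemma~\ref{lem:well defineness for tau fix on B}) lets us decompose any $f \in C_c(\G)$ into a finite sum $f = \sum_i g_i$ with $g_i \in C_c(B_i)$ for pre-compact open bisections $B_i$, it suffices to treat the case $f = g \in C_c(B)$ for a single pre-compact open bisection $B$. Here the bisection property is the key: for each $x \in \Gz$ there is at most one $\gamma \in B$ with $\s(\gamma) = x$, namely $(\s|_B)^{-1}(x)$ when $x \in \s(B)$. Moreover, this unique $\gamma$ lies in $\G_x^x$ if and only if $\r(\gamma) = x$, i.e.\ if and only if $x \in \fix(\alpha_B)$. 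Consequently
\[
\sum_{\gamma \in \G_x^x} g(\gamma) = \chi_{\fix(\alpha_B)}(x) \cdot g_B(x),
\]
which is in particular $\mu$-measurable in $x$ (as $\fix(\alpha_B)$ is measurable and $g_B \in C_c(\s(B))$).

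Integrating this identity against $\mu$ and comparing with \eqref{EQ:fix trace bisection case} yields
\[
\int_{\Gz} \sum_{\gamma \in \G_x^x} g(\gamma)\, \d\mu(x) = \int_{\fix(\alpha_B)} g_B\, \d\mu = \tau^\fix_\mu(g),
\]
which finishes the bisection case, and hence by linearity the general case. There is no genuine obstacle here; the only thing to be careful about is to invoke the partition-of-unity/well-definedness argument from Lemma~\ref{lem:well defineness for tau fix on B} so that the reduction to a single bisection is legitimate on both sides.
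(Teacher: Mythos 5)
Your proposal is correct and follows essentially the same route as the paper's proof: reduce by linearity to $g \in C_c(B)$ for a pre-compact open bisection $B$, then observe that $\tau^\triv_x(\eta_x(g)) = \sum_{\gamma\in\G_x^x} g(\gamma)$ is nonzero only when the unique $\gamma\in B$ over $x$ satisfies $\s(\gamma)=\r(\gamma)$, i.e.\ $x\in\fix(\alpha_B)$, and integrate. Your version is slightly more explicit about the identity $\sum_{\gamma\in\G_x^x} g(\gamma) = \chi_{\fix(\alpha_B)}(x)\, g_B(x)$ and the measurability of the integrand, but there is no substantive difference.
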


\begin{proof}
It suffices to verify (\ref{EQ:trace dec tau fix}) for $g \in C_c(B)$, where $B$ is a pre-compact open bisection. By definition, $\tau^\triv_{x}(\eta_x(g)) \neq 0$ if and only if there exists $\gamma \in B$ such that $\s(\gamma) = \r(\gamma)=x$ and $g(\gamma) \neq 0$. This happens if and only if $\s(\gamma) \in \fix(\alpha_B)$ and $g(\gamma) \neq 0$. Hence, we obtain
\[
\int_{\Gz} \tau^\triv_{x}(\eta_x(g)) \d \mu(x) = \int_{\fix(\alpha_B)} g((s|_B)^{-1}(x)) \d \mu(x) = \int_{\fix(\alpha_B)} g_B(x) \d \mu(x) = \tau^\fix_\mu(g),
\]
which concludes the proof of (\ref{EQ:trace dec tau fix}). The argument also shows that for such $g\in C_c(B)$, the map $x \mapsto \tau^\triv_{x}(\eta_x(g))=\chi_{\fix(\alpha_B)}(x)\cdot g((s|_B)^{-1}(x))$ is $\mu$-measurable. Hence,  the family $\{\tau^\triv_{x}\}_{x\in \Gz}$ is $\mu$-measurable.
\end{proof}

\begin{rem}\label{rem:2nd countable}
It is worth noticing that we do not need second-countability of $\G$ in the hypothesis of Lemma \ref{lem:measurable fields for tau fix}. The reason is that we can simply find the associated measurable field of tracial states $\{\tau_x^\triv\}_{x\in \Gz}$ and then directly verify (\ref{EQ:trace dec tau fix}) without using Renault’s disintegration theorem. However, Renault’s disintegration theorem was used in the proof of Proposition \ref{prop:Nes13 prop 1.1}.
\end{rem}

\begin{rem}
S. Neshveyev has pointed out to us that the fixed point trace $\tau^\fix_\mu$ in the current paper coincides with $\varphi_\mu''$ considered in \cite[Equation (2.3)]{NS22} by Lemma~\ref{lem:measurable fields for tau fix}.

\end{rem}

Proposition \ref{prop:Nes13 prop 1.1} and Lemma \ref{lem:measurable fields for tau fix} deal with tracial states on the maximal groupoid $C^*$-algebra $C^*(\G)$. To study the reduced case, we need to consult the newly developed tool in \cite{CN22}. 
More precisely, we need an exotic $C^*$-norm $\|\cdot\|_e$ on $C_c(\G_x^x)$ for each $x\in \Gz$ introduced in \cite[Definition~2.1]{CN22} which dominates the reduced $C^*$-norm. If we fix $x\in \Gz$, then we have the restriction map
\[
\eta_x: C_c(\G) \to C_c(\G_x^x), \quad f \mapsto f|_{\G_x^x}. 
\]
By \cite[Lemma 1.2]{CN22}, $\eta_x$ extends to completely positive contractions 
\[
\vartheta_x: C^*(\G) \to C^*(\G^x_x) \quad \text{and} \quad \vartheta_{x,r}: C^*_r(\G) \to C^*_r(\G^x_x).
\]
According to \cite[Theorem 2.4]{CN22}, the exotic $C^*$-norm $\|\cdot\|_e$ can be (equivalently) defined as follows: for each $h \in C_c(\G_x^x)$, we define
\[
\|h\|_e:= \inf\{\|f\|_r: f \in C_c(\G) \text{ and } \eta_x(f) = h\}. 
\]
Denote by $C^*_e(\G_x^x)$ the $C^*$-completion of $C_c(\G^x_x)$ with respect to $\|\cdot\|_e$.

\begin{prop}[{\cite[Proposition 3.1]{CN22}}]\label{prop:CN22 Prop 3.1}
Let $\G$ be a locally compact Hausdorff and \'{e}tale groupoid which is second-countable, and  $\tau$ be a tracial state on $C^*(\G)$ with the associated measure $\mu$ on $\Gz$. Then $\tau$ factors through $C^*_r(\G)$ \emph{if and only if} for the associated $\mu$-measurable field of tracial states $\{\tau_x\}_{x\in \Gz}$ (as in Proposition \ref{prop:Nes13 prop 1.1}), $\tau_x$ factors through $C^*_e(\G_x^x)$ (denoted by $\tau_{x,e}$) for $\mu$-almost every $x\in \Gz$. 
\end{prop}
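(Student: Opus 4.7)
My plan is to use Proposition \ref{prop:Nes13 prop 1.1} as a bridge: it reduces the question about $\tau$ to a question about the fiberwise states $\tau_x$, after which the definition of the exotic norm can be applied directly.

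For the ``if'' direction, assume $\tau_x$ factors through $C^*_e(\G_x^x)$ for $\mu$-a.e.\ $x$. The infimum definition $\|h\|_e = \inf\{\|f'\|_r : \eta_x(f') = h\}$ immediately yields $\|\eta_x(f)\|_e \leq \|f\|_r$ for every $f \in C_c(\G)$, whence $|\tau_x(\eta_x(f))| \leq \|\eta_x(f)\|_e \leq \|f\|_r$ a.e. Integrating and invoking the disintegration formula $\tau(f) = \int_{\Gz} \tau_x(\eta_x(f))\, d\mu(x)$ from Proposition \ref{prop:Nes13 prop 1.1} gives $|\tau(f)| \leq \|f\|_r$, so $\tau$ extends continuously to $C^*_r(\G)$.

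For the converse ``only if'' direction, the strategy is to localise $\tau$ via cutoffs in $C_c(\Gz)$ and then amplify the resulting averaged estimate into a pointwise $L^\infty$-bound through an iterated positivity argument. Given $\rho \in C_c(\Gz)$ with $0 \leq \rho \leq 1$ and $g \in C_c(\G)$, the identity $(\rho \ast g \ast \rho)(\gamma) = \rho(\r(\gamma)) g(\gamma) \rho(\s(\gamma))$ (\emph{cf.}\ Lemma \ref{lem:easy calculation 0}) gives $\eta_x(\rho \ast g \ast \rho) = \rho(x)^2 \eta_x(g)$ in $\CC[\G_x^x]$ for each $x$, hence
\[
\tau(\rho \ast g \ast \rho) = \int_{\Gz} \rho(x)^2 \, \tau_x(\eta_x(g))\, d\mu(x), \qquad |\tau(\rho \ast g \ast \rho)| \leq \|g\|_r.
\]
I would apply this with $g = a_n := (f^*f)^{2^{n-1}}$, for which $\|a_n\|_r = \|f\|_r^{2^n}$. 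The Kadison--Schwarz inequality for the CP contraction $\eta_x : C^*(\G) \to C^*(\G_x^x)$ (from \cite[Lemma 1.2]{CN22}, realisable more explicitly by decomposing $\G_x \setminus \G_x^x$ into right $\G_x^x$-orbits and identifying each orbit contribution to $\eta_x(f^*f) - \eta_x(f)^*\eta_x(f)$ as a square in $\CC[\G_x^x]$) combined with the state inequality $\tau_x(b^2) \geq \tau_x(b)^2$ for $b \geq 0$ yields, by induction on $n$, the amplification $\tau_x(\eta_x(a_n)) \geq |\tau_x(\eta_x(f))|^{2^n}$. Substituting,
\[
\int_{\Gz} \rho(x)^2 \, |\tau_x(\eta_x(f))|^{2^n} \, d\mu(x) \leq \|f\|_r^{2^n}.
\]
Taking $2^n$-th roots and letting $n \to \infty$ converts $L^{2^n}(\rho^2 d\mu)$ into $L^\infty(\rho^2 d\mu)$, giving $\mathrm{ess\,sup}_{x \in \supp\rho}|\tau_x(\eta_x(f))| \leq \|f\|_r$; varying $\rho$ over a countable family covering $\Gz$ (available by second countability) yields $|\tau_x(\eta_x(f))| \leq \|f\|_r$ for $\mu$-a.e.\ $x$.

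Second countability then supplies a countable $\|\cdot\|_r$-dense subfamily $\{f_n\} \subseteq C_c(\G)$; intersecting the associated $\mu$-conull sets produces a single conull set $X_0 \subseteq \Gz$ on which the bound holds simultaneously for all $f_n$, and density propagates this to arbitrary $f \in C_c(\G)$. For each $x \in X_0$ and $h \in \CC[\G_x^x]$, taking the infimum over lifts of $h$ yields $|\tau_x(h)| \leq \|h\|_e$, so $\tau_x$ factors through $C^*_e(\G_x^x)$. The main obstacle is the iterated-positivity step: the naive averaged estimate is only of $L^1$-type and fails to give a pointwise bound; the cascade Kadison--Schwarz $\to$ state inequality $\to$ induction over $n$, composed with the $L^{2^n} \to L^\infty$ interpolation, is what forces the pointwise control, and second countability is what converts it into a simultaneous a.e.\ statement over all $f$.
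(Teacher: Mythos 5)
The paper offers no proof of this proposition at all: it is imported verbatim from \cite[Proposition 3.1]{CN22}, so there is no internal argument to compare against, and a self-contained proof such as yours is genuinely additional content. Your ``if'' direction is correct. The skeleton of your ``only if'' direction is also sound: the orbit decomposition of $\G_x$ into right $\G_x^x$-cosets does exhibit $\eta_x(f^*\ast f)-\eta_x(f)^*\eta_x(f)$ as a sum of squares in $\CC[\G_x^x]$, the induction $\tau_x\big(\eta_x((f^*f)^{2^{n-1}})\big)\geq|\tau_x(\eta_x(f))|^{2^n}$ goes through since each $(f^*f)^{2^{n-1}}$ is self-adjoint and $\|(f^*f)^{2^{n-1}}\|_r=\|f\|_r^{2^n}$, and the $L^{2^n}\to L^\infty$ passage against a finite measure is standard. (The cutoffs $\rho$ are superfluous: $\mu$ is already a probability measure on $\Gz$, so the $2^n$-th root argument can be run globally.)

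There is, however, one genuine circularity in your final step. To pass from ``for each fixed $f$ the bound $|\tau_x(\eta_x(f))|\leq\|f\|_r$ holds $\mu$-a.e.'' to ``$\mu$-a.e.\ $x$ satisfies the bound for \emph{all} $f\in C_c(\G)$ simultaneously'', you intersect conull sets over a countable $\|\cdot\|_r$-dense family and then ``propagate by density''. That propagation needs $f\mapsto\tau_x(\eta_x(f))$ to be $\|\cdot\|_r$-continuous for the fixed good $x$ --- which is precisely the conclusion being sought; a priori $\tau_x\circ\vartheta_x$ is only a norm-one positive functional on $C^*(\G)$, hence only $\|\cdot\|_{\max}$-continuous, and a $\|\cdot\|_r$-dense family need not be $\|\cdot\|_{\max}$-dense. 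The repair is to choose the countable family dense for the \emph{maximal} norm (equivalently, for the inductive limit topology on $C_c(\G)$, which second countability supplies): then $|\tau_x(\eta_x(f))-\tau_x(\eta_x(f_n))|\leq\|f-f_n\|_{\max}$ and $\big|\|f\|_r-\|f_n\|_r\big|\leq\|f-f_n\|_{\max}$, so the bound does propagate to all of $C_c(\G)$ on a single conull set, and taking the infimum over lifts of a given $h\in C_c(\G_x^x)$ yields $|\tau_x(h)|\leq\|h\|_e$ as desired. With that one correction your argument is complete.
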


Combining Lemma \ref{lem:measurable fields for tau fix} with Proposition \ref{prop:CN22 Prop 3.1}, we reach the following:

\begin{prop}\label{prop:char for tau fix extension}
Let $\G$ be a locally compact Hausdorff and \'{e}tale groupoid which is second-countable, and $\mu$ be an invariant probability Radon measure on $\Gz$. Then $\tau^\fix_\mu$ extends to a tracial state on $C^*_r(\G)$ \emph{if and only if} the trivial representation $\tau_{x}^\triv$ on $C_c(\G_x^x)$ extends to a tracial state on $C^*_e(\G_x^x)$ for $\mu$-almost every $x\in \Gz$. 
\end{prop}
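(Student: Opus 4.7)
The plan is to deduce the proposition directly by combining the disintegration results quoted from \cite{Nes13, CN22} with the explicit computation of the field of states associated to $\tau^\fix_\mu$ performed in Lemma~\ref{lem:measurable fields for tau fix}. I would organise the argument in three short steps.

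First, I would use Corollary~\ref{cor:tau fix extended to Cmax} to extend $\tau^\fix_\mu$ to a (unique) tracial state on $C^*(\G)$, keeping the same name. Because the canonical surjection $q_{\max}: C^*(\G) \to C^*_r(\G)$ restricts to the identity on the dense subspace $C_c(\G)$, a continuous linear functional on $C_c(\G)$ extends to $C^*_r(\G)$ if and only if its extension to $C^*(\G)$ annihilates $\ker q_{\max}$, i.e. factors through $C^*_r(\G)$. So the left-hand side of the ``iff'' is really the statement that $\tau^\fix_\mu$, viewed as a tracial state on $C^*(\G)$, factors through $C^*_r(\G)$.

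Second, I would apply Proposition~\ref{prop:Nes13 prop 1.1} (which requires second countability of $\G$) to produce the unique $\mu$-measurable field $\{\tau_x\}_{x\in \Gz}$ of states on the isotropy group $C^*$-algebras $C^*(\G_x^x)$ satisfying the disintegration identity (\ref{EQ:trace dec}) for $\tau^\fix_\mu$. On the other hand, Lemma~\ref{lem:measurable fields for tau fix} (whose conclusion does not need second countability, as highlighted in Remark~\ref{rem:2nd countable}) shows that the field of trivial representations $\{\tau_x^\triv\}_{x\in \Gz}$ also satisfies the same identity for every $f \in C_c(\G)$. Invoking the uniqueness clause of Proposition~\ref{prop:Nes13 prop 1.1}, I conclude that $\tau_x = \tau_x^\triv$ for $\mu$-almost every $x \in \Gz$.

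Finally, I plug this identification into Proposition~\ref{prop:CN22 Prop 3.1}: $\tau^\fix_\mu$ factors through $C^*_r(\G)$ precisely when $\tau_x$ factors through $C^*_e(\G_x^x)$ for $\mu$-almost every $x$, and by the previous step this is the same as saying that $\tau_x^\triv$ extends to a state on $C^*_e(\G_x^x)$ for $\mu$-almost every $x$. Combined with the reformulation from the first step, this is exactly the claimed equivalence. The only genuinely delicate point is the uniqueness of the measurable field supplied by \cite{Nes13}; once that is in hand, the proof reduces to matching the two disintegration formulas and quoting \cite{CN22}, so I do not anticipate any serious obstacle beyond bookkeeping.
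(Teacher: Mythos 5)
Your argument is correct and follows exactly the route the paper intends: the paper derives this proposition by "combining Lemma~\ref{lem:measurable fields for tau fix} and Proposition~\ref{prop:CN22 Prop 3.1}," which is precisely your identification of the measurable field of $\tau^\fix_\mu$ with $\{\tau_x^\triv\}_x$ via the uniqueness in Proposition~\ref{prop:Nes13 prop 1.1}, followed by the factorization criterion of \cite{CN22}. Your write-up simply makes explicit the bookkeeping (the extension to $C^*(\G)$ via Corollary~\ref{cor:tau fix extended to Cmax} and the reformulation of extendability as factoring through $q_{\max}$) that the paper leaves implicit.
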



Consequently, we have the following:

\begin{cor}\label{cor:ess free and faithful}
Let $\G$ be a locally compact Hausdorff and \'{e}tale groupoid which is second-countable, and $\mu$ be an invariant probability Rodan measure on $\Gz$. Then $\G$ is essentially free with respect to $\mu$ \emph{if and only if} the following conditions hold:
\begin{itemize}
 \item $\G^x_x$ is amenable for $\mu$-almost all $x\in \Gz$;
 \item for any tracial state $\tau$ on $C^*_r(\G)$ with associated measure $\mu$, $\tau_{x,e}$ is faithful on $C^*_e(\G^x_x)$ for $\mu$-almost all $x\in \Gz$.
\end{itemize}
\end{cor}

\begin{proof}
By definition, $\G$ is essentially free with respect to $\mu$ if and only if $\G_x^x$ is trivial for $\mu$-almost all $x\in \Gz$, which concludes the proof of the necessity. For sufficiency, we consider the fixed point trace $\tau_\mu^\fix$. By Proposition \ref{prop:char for tau fix extension} and the assumption of amenability of $\G^x_x$ for $\mu$-almost all $x\in \Gz$, $\tau_\mu^\fix$ extends to a tracial state on $C^*_r(\G)$. Moreover for $x\in \Gz$, $(\tau_\mu^\fix)_{x,e}$ is the trivial representation by Lemma \ref{lem:measurable fields for tau fix}. Hence, it is faithful if and only if it is injective, which is equivalent to that $\G_x^x$ is trivial. 
\end{proof}

As another direct corollary of Proposition~\ref{prop:CN22 Prop 3.1}, we also obtain the following:

\begin{cor}\label{cor:CN22 Prop 3.1}
Let $\G$ be a locally compact Hausdorff and \'{e}tale groupoid which is second-countable, and  $\tau$ be a tracial state on $C^*_r(\G)$ with the associated measure $\mu$ on $\Gz$. If $\|\cdot\|_e = \|\cdot\|_r$ on $C_c(\G^x_x)$ for $\mu$-almost every $x\in \Gz$, then for the associated $\mu$-measurable field of tracial states $\{\tau_x\}_{x\in \Gz}$ (as in Proposition \ref{prop:Nes13 prop 1.1}), $\tau_x$ factors through a state $\tau_{x,r}$ on $C^*_r(\G_x^x)$ for $\mu$-almost every $x\in \Gz$. 
\end{cor}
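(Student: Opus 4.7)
The plan is to reduce everything to Proposition~\ref{prop:CN22 Prop 3.1} by first lifting $\tau$ to the maximal groupoid $C^*$-algebra. Concretely, I would consider the canonical quotient map $q_{\max}\colon C^*(\G)\to C^*_r(\G)$ from Section~\ref{sec:Preliminaries}, and set $\widetilde{\tau}:=\tau\circ q_{\max}$. Then $\widetilde{\tau}$ is a tracial state on $C^*(\G)$; moreover, since $q_{\max}$ restricts to the identity on $C_c(\G)$, the associated measure of $\widetilde{\tau}$ on $\Gz$ coincides with the associated measure $\mu$ of $\tau$, and $\widetilde{\tau}$ tautologically factors through $C^*_r(\G)$.

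Next, I would invoke Proposition~\ref{prop:Nes13 prop 1.1} to obtain the unique $\mu$-measurable field $\{\tau_x\}_{x\in\Gz}$ of states on $C^*(\G^x_x)$ that disintegrates $\widetilde{\tau}$ via formula~(\ref{EQ:trace dec}). Because the values of $\widetilde{\tau}$ and $\tau$ agree on $C_c(\G)$, this is the same measurable field $\{\tau_x\}_{x\in\Gz}$ referred to in the statement. Applying Proposition~\ref{prop:CN22 Prop 3.1} to $\widetilde{\tau}$, which factors through $C^*_r(\G)$, I get that $\tau_x$ factors through a state on $C^*_e(\G^x_x)$ for $\mu$-almost every $x\in\Gz$.

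Finally, I would use the hypothesis $\|\cdot\|_e=\|\cdot\|_r$ on $C_c(\G^x_x)$ for $\mu$-almost every $x$. Let $N_1$ be the $\mu$-null set where the conclusion of Proposition~\ref{prop:CN22 Prop 3.1} fails, and $N_2$ the $\mu$-null set where the two norms disagree; set $N:=N_1\cup N_2$. Then for every $x\in\Gz\setminus N$, the identity map on $C_c(\G^x_x)$ extends to a canonical isomorphism $C^*_e(\G^x_x)\cong C^*_r(\G^x_x)$, and hence the state factoring $\tau_x$ through $C^*_e(\G^x_x)$ yields the desired state $\tau_{x,r}$ on $C^*_r(\G^x_x)$. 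This proves the claim on the $\mu$-co-null set $\Gz\setminus N$.

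There is no substantial obstacle here; the argument is essentially a packaging of the two quoted results with the $\|\cdot\|_e=\|\cdot\|_r$ hypothesis. The only mildly delicate point is the bookkeeping of null sets and the verification that the measurable field produced by Proposition~\ref{prop:Nes13 prop 1.1} applied to $\widetilde{\tau}$ is the one referenced in the statement; this follows at once from the uniqueness clause in Proposition~\ref{prop:Nes13 prop 1.1} together with the fact that $\widetilde{\tau}$ and $\tau$ coincide on $C_c(\G)$.
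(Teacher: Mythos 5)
Your argument is correct and is exactly the route the paper intends: the paper states this as a ``direct corollary'' of Proposition~\ref{prop:CN22 Prop 3.1} with no written proof, and the intended reasoning is precisely your pullback of $\tau$ along $q_{\max}$, the application of Propositions~\ref{prop:Nes13 prop 1.1} and~\ref{prop:CN22 Prop 3.1}, and the identification $C^*_e(\G^x_x)\cong C^*_r(\G^x_x)$ on the co-null set where the norms agree. No discrepancy to report.
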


For future use, we record the following decomposition formula extending (\ref{EQ:trace dec}) for general elements in the reduced groupoid $C^*$-algebra $C^*_r(\G)$.

\begin{prop}\label{prop:extend trace dec to general elements}
Let $\G$ be a locally compact Hausdorff and \'{e}tale groupoid which is second-countable, and  $\tau$ be a tracial state on $C^*_r(\G)$ with the associated measure $\mu$ on $\Gz$. Assume that for $\mu$-almost every $x\in \Gz$, we have $\|\cdot\|_e = \|\cdot\|_r$ on $C_c(\G^x_x)$. Then for $a \in C^*_r(\G)$, we have
\begin{equation}\label{EQ:trace dec 2}
\tau(a) = \int_{\Gz} \tau_{x,r}(\vartheta_{x,r}(a)) \d \mu(x),
\end{equation}
where $\tau_{x,r}$ is the associated tracial state on $C^*_r(\G^x_x)$ in Corollary \ref{cor:CN22 Prop 3.1}.
\end{prop}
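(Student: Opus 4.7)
The plan is to first verify the identity \eqref{EQ:trace dec 2} on the dense $*$-subalgebra $C_c(\G)$, then show that both sides are norm-continuous linear functionals on $C^*_r(\G)$, and finally extend by density using dominated convergence.

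For the first step, I would compose $\tau$ with the canonical quotient $q_{\max}: C^*(\G) \to C^*_r(\G)$ to obtain a tracial state $\tau \circ q_{\max}$ on $C^*(\G)$ with the same associated measure $\mu$. Applying Proposition~\ref{prop:Nes13 prop 1.1} produces a $\mu$-measurable field $\{\tau_x\}_{x \in \Gz}$ of states on $C^*(\G^x_x)$ satisfying
\[
\tau(f) = \int_{\Gz} \tau_x(\eta_x(f))\, \d\mu(x), \qquad f \in C_c(\G).
\]
Since $\tau \circ q_{\max}$ factors through $C^*_r(\G)$, Proposition~\ref{prop:CN22 Prop 3.1} guarantees that $\tau_x$ descends to a state on $C^*_e(\G^x_x)$ for $\mu$-almost every $x$; under the standing hypothesis $\|\cdot\|_e = \|\cdot\|_r$, this descent is precisely the state $\tau_{x,r}$ on $C^*_r(\G^x_x)$ furnished by Corollary~\ref{cor:CN22 Prop 3.1}. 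Since $\vartheta_{x,r}$ extends $\eta_x: C_c(\G) \to C_c(\G^x_x)$, one gets $\tau_x(\eta_x(f)) = \tau_{x,r}(\vartheta_{x,r}(f))$ for $\mu$-a.e.\ $x$, which is \eqref{EQ:trace dec 2} restricted to $C_c(\G)$.

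For the second step, contractivity of the completely positive map $\vartheta_{x,r}$ and of the state $\tau_{x,r}$ yields the pointwise bound $|\tau_{x,r}(\vartheta_{x,r}(a))| \leq \|a\|_r$ for $\mu$-a.e.\ $x$, so the right-hand side of \eqref{EQ:trace dec 2} is bounded by $\|a\|_r$ provided the integrand is $\mu$-measurable. For $a \in C_c(\G)$ measurability is part of Proposition~\ref{prop:Nes13 prop 1.1} via the identification made in the first step; for general $a \in C^*_r(\G)$, I would approximate by $a_n \in C_c(\G)$ with $\|a_n - a\|_r \to 0$ and invoke
\[
|\tau_{x,r}(\vartheta_{x,r}(a_n)) - \tau_{x,r}(\vartheta_{x,r}(a))| \leq \|a_n - a\|_r,
\]
which is uniform in $x$, so the pointwise (in fact uniform) limit remains measurable.

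The final step is the density argument: for the approximating sequence above, $\tau(a_n) = \int_{\Gz} \tau_{x,r}(\vartheta_{x,r}(a_n))\, \d\mu(x)$ for every $n$ by the first step, and both sides converge to the corresponding quantities for $a$, the left-hand side by continuity of $\tau$ and the right-hand side by dominated convergence on the probability space $(\Gz, \mu)$, with dominating constant $\sup_n \|a_n\|_r < \infty$. I do not foresee any substantive obstacle here; the only delicate point is the $\mu$-measurability of $x \mapsto \tau_{x,r}(\vartheta_{x,r}(a))$ for arbitrary $a \in C^*_r(\G)$, but this is handled uniformly by the contractivity of $\vartheta_{x,r}$ together with the $\mu$-measurability of the field for the dense class $C_c(\G)$.
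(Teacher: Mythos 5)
Your proposal is correct and follows essentially the same route as the paper's own proof: establish \eqref{EQ:trace dec 2} on $C_c(\G)$ via Propositions~\ref{prop:Nes13 prop 1.1} and \ref{prop:CN22 Prop 3.1}, then extend by density using the uniform bound $|\tau_{x,r}(\vartheta_{x,r}(a))| \leq \|a\|_r$ from contractivity together with dominated convergence. Your explicit treatment of the $\mu$-measurability of the integrand for general $a$ (via uniform limits of measurable functions) is a point the paper leaves implicit, but it is not a different argument.
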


\begin{proof}
By Proposition \ref{prop:Nes13 prop 1.1} and  Corollary \ref{cor:CN22 Prop 3.1}, we know that (\ref{EQ:trace dec 2}) holds for any $f\in C_c(\G)$. Given $a \in C^*_r(\G)$, we take a sequence $\{f_n\}_{n\in \NN}$ in $C_c(\G)$ such that $\|a-f_n\|_r \to 0$ as $n \to \infty$. For each $x\in \Gz$, we have that
\[
|\tau_{x,r}(\vartheta_{x,r}(f_n)) - \tau_{x,r}(\vartheta_{x,r}(a))| \leq \|\vartheta_{x,r}(f_n-a)\|_r \leq \|f_n - a\|_r \to 0 \quad \text{as} \quad n \to \infty.
\]
Moreover, 
\[
|\tau_{x,r}(\vartheta_{x,r}(f_n))| \leq \|f_n\|_r\leq ||f_n-a||_r+||a||_r \leq 2\|a\|_r
\]
for sufficiently large $n$. Hence, Lebesgue's dominated convergence theorem implies that
\[
\int_{\Gz} \tau_{x,r}(\vartheta_{x,r}(f_n)) \d \mu(x) \to \int_{\Gz} \tau_{x,r}(\vartheta_{x,r}(a)) \d \mu(x) \quad \text{as} \quad n \to \infty.
\]
Since $\tau$ is continuous, we have $\tau(f_n) \to \tau(a)$ as $n \to \infty$. Therefore, we have completed the proof.
\end{proof}

Using an identical argument as in the proof of Proposition \ref{prop:extend trace dec to general elements}, (\ref{EQ:trace dec}) can be always extended from $C_c(\G)$ to $C^*(\G)$ without requiring $\|\cdot\|_e = \|\cdot\|_r$ on $C_c(\G^x_x)$ for $\mu$-almost every $x\in \Gz$. More precisely, we have the following:

\begin{cor}
Let $\G$ be a locally compact Hausdorff and \'{e}tale groupoid which is second-countable, and $\tau$ be a tracial state on $C^*(\G)$ with the associated measure $\mu$. Then there exists a unique $\mu$-measurable field $\{\tau_x\}_{x\in \Gz}$, where $\tau_x$ is a tracial state on $C^*(\G_x^x)$ such that
\begin{equation}
\tau(a) = \int_{\Gz} \tau_x(\vartheta_x(a)) \d \mu(x) \quad \text{for} \quad a \in C^*(\G). 
\end{equation}
\end{cor}

%
%

\section{Applications}

\subsection{Transformation groupoids}

In this subsection, we apply the results in previous sections to transformation groupoids. If $\Gamma$ is a discrete group acting on a locally compact Hausdorff space $X$, then we form the \emph{transformation groupoid $X \rtimes \Gamma$}. In this case, the reduced groupoid $C^*$-algebra $C^*_r(X \rtimes \Gamma)$ is $*$-isomorphic to the reduced crossed product $C_0(X) \rtimes_r \Gamma$, and the maximal groupoid $C^*$-algebra $C^*(X \rtimes \Gamma)$ is $*$-isomorphic to the maximal crossed product $C_0(X) \rtimes \Gamma$.

For each $\gamma \in \Gamma$, we use the same notation to denote the associated homeomorphism $\gamma: X \ni x \mapsto \gamma x \in X$. Therefore, a probability Radon measure $\mu$ on $X$ is invariant in the sense in Section \ref{ssec:tracial states} if and only if $\gamma$ is $\mu$-preserving for each $\gamma \in \Gamma$. 



The notion of essential freeness for transformation groupoids can be easily simplified as follows:

\begin{lem}\label{lem:char for ess free}
Let $\Gamma$ be a discrete group acting on a locally compact Hausdorff space $X$ with an invariant probability Radon measure $\mu$ on $X$. Then the transformation groupoid $X \rtimes \Gamma$ is essentially free with respect to $\mu$ (in the sense of Definition \ref{defn:ess free}) \emph{if and only if} the action is essentially free with respect to $\mu$ in the sense that $\mu(\fix(\gamma)) = 0$ for every $\gamma \neq 1$ in $\Gamma$.
\end{lem}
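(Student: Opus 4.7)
The plan is to translate bisections in $X\rtimes\Gamma$ back into the subsets of $X$ on which the individual group elements act by fixed points, and then push the two directions through routine measure-theoretic regularity arguments.

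First I would fix the description of open bisections. Since $\Gamma$ is discrete, each ``level'' $X\times\{\gamma\}$ is clopen in $X\rtimes\Gamma$. For a pre-compact open bisection $B\subseteq X\rtimes\Gamma\setminus(X\times\{1\})$, the projection of $\overline{B}$ to $\Gamma$ is compact, hence contained in a finite set $F\subset\Gamma\setminus\{1\}$, so $B=\bigsqcup_{\gamma\in F}B_\gamma$ with $B_\gamma\subseteq U_\gamma\times\{\gamma\}$ for some open $U_\gamma\subseteq X$. With the convention $\r(x,\gamma)=x$ and $\s(x,\gamma)=\gamma^{-1}x$, the homeomorphism $\alpha_{B_\gamma}$ is just $y\mapsto\gamma y$ restricted to $\gamma^{-1}U_\gamma$, and a direct computation gives
\[
\fix(\alpha_B)\;=\;\bigsqcup_{\gamma\in F}\fix(\alpha_{B_\gamma})\;\subseteq\;\bigcup_{\gamma\in F}\fix(\gamma).
\]

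For the backward implication, assume $\mu(\fix(\gamma))=0$ for every $\gamma\neq 1$. Given any pre-compact open bisection $B$ inside $X\rtimes\Gamma\setminus(X\times\{1\})$, the display above and finite subadditivity immediately give $\mu(\fix(\alpha_B))\leq\sum_{\gamma\in F}\mu(\fix(\gamma))=0$, so $X\rtimes\Gamma$ is essentially free with respect to $\mu$ in the sense of Definition~\ref{defn:ess free}.

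For the forward implication, fix $\gamma\neq 1$ and a compact $K\subseteq\fix(\gamma)$; by inner regularity of the Radon measure $\mu$ it suffices to show $\mu(K)=0$. Since $X$ is locally compact Hausdorff, I can choose an open $U\supseteq K$ with $\overline{U}$ compact. Then $B:=U\times\{\gamma\}$ is a pre-compact open bisection in $X\rtimes\Gamma\setminus(X\times\{1\})$ with $K\subseteq\fix(\alpha_B)$, so essential freeness gives $\mu(K)\leq\mu(\fix(\alpha_B))=0$. Taking the supremum over $K$ yields $\mu(\fix(\gamma))=0$.

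The only mildly delicate point is the observation in the first paragraph that pre-compactness of $B$ forces it to sit inside finitely many levels $X\times\{\gamma\}$; without this the bisection could a priori spread across infinitely many group elements and the backward direction would not reduce to a finite sum. Everything else is bookkeeping plus standard inner/outer regularity of Radon measures.
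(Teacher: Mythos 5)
Your proof is correct and takes essentially the same approach the paper intends: the paper states this lemma without proof, noting only that it follows by ``applying the inner regularity of Radon measures,'' which is precisely the regularity step in your forward direction. Your decomposition of a pre-compact open bisection into finitely many levels $U_\gamma\times\{\gamma\}$, with $\alpha_{B_\gamma}$ identified as the restriction of the action of $\gamma$, is exactly the bookkeeping the paper leaves implicit.
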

\begin{proof}
The necessity follows from the inner regularity of the Radon measure $\mu$, while the sufficiency follows from the fact that every pre-compact bisection in $X \rtimes \Gamma$ is contained in $X\times F$ for some finite $F\subseteq \Gamma$.
\end{proof}

Moreover, one can calculate directly that the fixed point trace $\tau^\fix_\mu$ has the following form:
\begin{equation}\label{EQ:tau fix group action case}
\tau^\fix_\mu: C_c(\Gamma, C_0(X)) \to \CC, \quad \sum_{i=1}^n f_i \gamma_i \mapsto \sum_{i=1}^n \int_{\fix(\gamma_i)} f_i \d \mu,
\end{equation}
where $f_i \in C_0(X)$ and $\gamma_i \in \Gamma$ for $i =1, \dotsc, n$. 

In the maximal crossed products, Theorem \ref{thm:char for ess free max case} recovers \cite[Theorem 2.7]{KTT90}: 

\begin{prop}\label{prop:char for ess free in group action max}
Let $\Gamma$ be a discrete group acting on a locally compact Hausdorff space $X$ with an invariant probability Radon measure $\mu$ on $X$. 
Then the following are equivalent:
\begin{enumerate}
 \item The action is essentially free with respect to $\mu$;
 \item Any tracial state $\tau$ on $C_0(X) \rtimes \Gamma$ with the associated measure being $\mu$ is canonical.
\end{enumerate}
\end{prop}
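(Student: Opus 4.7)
The plan is to view Proposition \ref{prop:char for ess free in group action max} as a direct specialization of Theorem \ref{thm:char for ess free max case} to the transformation groupoid $\G = X \rtimes \Gamma$. First, I would recall the standard identification $C^*(X \rtimes \Gamma) \cong C_0(X) \rtimes \Gamma$ of the maximal groupoid $C^*$-algebra with the maximal crossed product; under this identification, the canonical conditional expectation onto $C_0((X \rtimes \Gamma)^{(0)}) = C_0(X)$ agrees with the canonical conditional expectation of the crossed product. Consequently, the tracial states on $C_0(X) \rtimes \Gamma$ whose restriction to $C_0(X)$ integrates against $\mu$ correspond exactly to the tracial states on $C^*(X \rtimes \Gamma)$ with associated measure $\mu$, and the two notions of canonicity match.

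Next, by Lemma \ref{lem:char for ess free}, the transformation groupoid $X \rtimes \Gamma$ is essentially free with respect to $\mu$ in the sense of Definition \ref{defn:ess free} if and only if the action itself is essentially free with respect to $\mu$. Therefore the equivalence between (1) and (2) in Proposition \ref{prop:char for ess free in group action max} is precisely the equivalence between conditions (1) and (4) of Theorem \ref{thm:char for ess free max case}, applied to $\G = X \rtimes \Gamma$.

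Thus the proof is a one-line invocation of Theorem \ref{thm:char for ess free max case}, and I do not anticipate any real obstacle. The direction (1) $\Rightarrow$ (2) is immediate from Proposition \ref{prop:ess. free implies unique trace}. For (2) $\Rightarrow$ (1), the hidden content is that Corollary \ref{cor:tau fix extended to Cmax} automatically furnishes an extension of $\tau^\fix_\mu$ to a tracial state on the maximal $C^*$-algebra with associated measure $\mu$ (via Lemma \ref{lem:tau fix defined}); canonicity of that extension forces $\tau^\fix_\mu = \tau_\mu$ on $C_c(X \rtimes \Gamma)$, and then Lemma \ref{lem:tau fix char strong eff.} yields essential freeness. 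If anything requires care, it is only verifying that no extendability hypothesis is needed in the transformation groupoid case, which is precisely why the equivalence to condition (4), rather than to (2) or (3), is the right one to apply here.
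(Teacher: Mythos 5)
Your proposal is correct and matches the paper's (implicit) argument exactly: the paper presents this proposition as the specialization of Theorem~\ref{thm:char for ess free max case} (specifically the equivalence (1) $\Leftrightarrow$ (4)) to the transformation groupoid $X \rtimes \Gamma$, using the identification $C^*(X\rtimes\Gamma)\cong C_0(X)\rtimes\Gamma$ and Lemma~\ref{lem:char for ess free}. Your remark that condition (4) is the right one to invoke because Corollary~\ref{cor:tau fix extended to Cmax} supplies the extension of $\tau^\fix_\mu$ for free in the maximal case is precisely the point.
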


In the reduced crossed products, Theorem \ref{thm:char for ess free max case} can be translated as follows:

\begin{prop}\label{prop:char for ess free in group action}
Let $\Gamma$ be a discrete group acting on a locally compact Hausdorff space $X$ with an invariant probability Radon measure $\mu$ on $X$. 
Then the following are equivalent:
\begin{enumerate}
 \item The action is essentially free with respect to $\mu$;
 \item Any tracial state $\tau$ on $C_0(X) \rtimes_r \Gamma$ with the associated measure being $\mu$ is canonical and $\tau^\fix_\mu$ can be extended to a tracial state on $C_0(X) \rtimes_r \Gamma$. 
\end{enumerate}
\end{prop}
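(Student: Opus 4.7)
The plan is to deduce this proposition directly from Theorem~\ref{thm:char for ess free max case} applied to the transformation groupoid $\G = X\rtimes \Gamma$, together with the canonical identifications $C^*_r(X\rtimes \Gamma) \cong C_0(X) \rtimes_r \Gamma$ and $C^*(X\rtimes \Gamma) \cong C_0(X) \rtimes \Gamma$, and with Lemma~\ref{lem:char for ess free}, which translates essential freeness of $\G$ with respect to $\mu$ into the condition $\mu(\fix(\gamma)) = 0$ for every $1\neq \gamma \in \Gamma$.

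For (1) $\Rightarrow$ (2), I would observe that the reduced $C^*$-norm $\|\cdot\|_r$ on $C_c(X\rtimes \Gamma)$ trivially dominates itself, so we may take $\|\cdot\|_\nu = \|\cdot\|_r$ in Theorem~\ref{thm:char for ess free max case}. Since (1) of the proposition is equivalent by Lemma~\ref{lem:char for ess free} to condition (1) of Theorem~\ref{thm:char for ess free max case} for $\G = X \rtimes \Gamma$, the implication ``(1) $\Rightarrow$ (2)'' of Theorem~\ref{thm:char for ess free max case} applied to $\|\cdot\|_r$ yields exactly the two conclusions of (2): every tracial state on $C_0(X)\rtimes_r \Gamma$ with associated measure $\mu$ is canonical, and $\tau^\fix_\mu$ extends to a tracial state on $C_0(X)\rtimes_r \Gamma$.

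For the converse (2) $\Rightarrow$ (1), I would note that condition (2) of the proposition is precisely condition (3) of Theorem~\ref{thm:char for ess free max case} applied to the specific choice $\|\cdot\|_\nu = \|\cdot\|_r$ on $C_c(X\rtimes \Gamma)$. Therefore the implication ``(3) $\Rightarrow$ (1)'' of Theorem~\ref{thm:char for ess free max case} gives that $X \rtimes \Gamma$ is essentially free with respect to $\mu$ in the groupoid sense, and Lemma~\ref{lem:char for ess free} translates this back into essential freeness of the action.

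Since the entire argument is a direct specialization of Theorem~\ref{thm:char for ess free max case}, there is no real obstacle; the only things to check are the routine identifications of crossed products with transformation groupoid $C^*$-algebras and the compatibility of the formula (\ref{EQ:tau fix group action case}) for $\tau^\fix_\mu$ on $C_c(\Gamma, C_0(X))$ with the general definition (\ref{EQ:tau fix general}), both of which follow by writing an element of the form $f\gamma$ as a function supported on the bisection $X \times \{\gamma\}$ of $X \rtimes \Gamma$.
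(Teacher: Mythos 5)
Your proposal is correct and matches the paper's approach: the paper presents this proposition precisely as the translation of Theorem~\ref{thm:char for ess free max case} to the transformation groupoid $X\rtimes\Gamma$ with the choice $\|\cdot\|_\nu = \|\cdot\|_r$, using the standard identification $C^*_r(X\rtimes\Gamma)\cong C_0(X)\rtimes_r\Gamma$ and the simplification of essential freeness in Lemma~\ref{lem:char for ess free}. Your observation that condition (2) is exactly the instance of condition (3) of the theorem for the reduced norm, so that both directions follow from the equivalences (1) $\Leftrightarrow$ (2) $\Leftrightarrow$ (3) there, is precisely the intended argument.
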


Using the discussions in Section \ref{sec:reduction}, we obtain the following:

\begin{lem}\label{lem:char for tau fix extension group action case}
Let $\Gamma$ be a discrete group acting on a locally compact Hausdorff space $X$ with an invariant probability Radon measure $\mu$. Consider the following conditions:
\begin{enumerate}
 \item $\tau^\fix_\mu$ extends to a tracial state on $C_0(X) \rtimes_r \Gamma$;
 \item For $\mu$-almost every $x\in X$, the isotropy group $\Gamma_x:=\{\gamma \in \Gamma: \gamma x = x\}$ is amenable.
\end{enumerate}
Then (2) $\Rightarrow$ (1).

If $\Gamma$ is countable and $X$ is second-countable, then we actually have (1) $\Leftrightarrow$ (2). 
\end{lem}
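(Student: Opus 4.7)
The plan is to combine Proposition~\ref{prop:char for tau fix extension} with Hulanicki's theorem, which identifies amenability of a discrete group with the factoring of the trivial representation through its reduced $C^*$-algebra.

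For $(2) \Rightarrow (1)$, I would avoid the second-countability hypothesis by constructing the extension directly. Since $\Gamma_x$ is amenable for $\mu$-almost every $x$, Hulanicki's theorem supplies a canonical state $\omega_x$ on $C^*_r(\Gamma_x) = C^*(\Gamma_x)$ extending $\tau_x^\triv$. Composing with the completely positive contraction $\vartheta_{x,r}: C^*_r(X \rtimes \Gamma) \to C^*_r(\Gamma_x)$ from \cite[Lemma~1.2]{CN22}, I then set
\[
\tau(a) := \int_X \omega_x(\vartheta_{x,r}(a)) \, d\mu(x), \quad a \in C^*_r(X \rtimes \Gamma).
\]
Measurability of the integrand would first be checked on $C_c(X \rtimes \Gamma)$, where it is the finite sum $\sum_\gamma a_\gamma(x) \chi_{\fix(\gamma)}(x)$, and then extended to all of $C^*_r(X \rtimes \Gamma)$ by pointwise approximation (using that $\omega_x \circ \vartheta_{x,r}$ is a contraction). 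A direct calculation shows $\tau|_{C_c} = \tau^\fix_\mu$ via formula~(\ref{EQ:tau fix group action case}), and the trace property on $C^*_r(X \rtimes \Gamma)$ follows by density together with Lemma~\ref{lem:tau fix is a trace}.

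For $(1) \Rightarrow (2)$ under the countability hypotheses, Proposition~\ref{prop:char for tau fix extension} shows that $\tau_x^\triv$ extends to a state on $C^*_e(\Gamma_x)$ for $\mu$-almost every $x$. The crucial additional step is to upgrade this to an extension to $C^*_r(\Gamma_x)$, after which Hulanicki's theorem yields amenability of $\Gamma_x$. To this end, I would prove $\|\cdot\|_e = \|\cdot\|_r$ on $C_c(\Gamma_x) = \CC[\Gamma_x]$ for transformation groupoids. The bound $\|\cdot\|_r \leq \|\cdot\|_e$ is immediate from the contractivity of $\vartheta_{x,r}$. For the reverse, given $h = \sum_{\gamma \in F} c_\gamma u_\gamma$ with $F \subseteq \Gamma_x$ finite, pick $\phi_n \in C_c(X)$ with $\phi_n(x) = 1$, $0 \leq \phi_n \leq 1$, and $\supp(\phi_n)$ shrinking to $\{x\}$, and set $f_n := \sum_{\gamma \in F} c_\gamma\, \phi_n \cdot \gamma \in C_c(X \rtimes \Gamma)$. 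Then $\eta_x(f_n) = h$, and a direct expansion of $\lambda_y(f_n)$ via the convolution formula~(\ref{EQ:reduced algebra defn}) identifies it as the composition of a diagonal multiplication operator of norm at most $1$ (with entries $\phi_n(\beta y)$ indexed by $\beta \in \Gamma$) and a pure convolution operator on $\ell^2(\Gamma)$ determined by $h$. Decomposing $\ell^2(\Gamma)$ along the right $\Gamma_x$-cosets makes the latter unitarily equivalent to an infinite direct sum of copies of $\lambda_{\Gamma_x}(h)$, so its norm equals $\|h\|_r$; hence $\|f_n\|_r \leq \|h\|_r$ and consequently $\|h\|_e \leq \|h\|_r$.

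The main obstacle is this last norm computation, which relies on the particular structure of transformation groupoids in an essential way: $\lambda_y(f_n)$ decouples cleanly into a multiplication part and a convolution by $h \in \CC[\Gamma_x]$, and the coset decomposition then pins down the precise norm. Once the equality $C^*_e(\Gamma_x) = C^*_r(\Gamma_x)$ is established, both implications of the lemma follow.
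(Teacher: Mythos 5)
Your proposal is correct and follows the same overall architecture as the paper: both directions go through the disintegration $\tau^\fix_\mu(f) = \int_X \tau^\triv_x(\eta_x(f))\,\d\mu(x)$, with (2) $\Rightarrow$ (1) obtained by integrating $\omega_x \circ \vartheta_{x,r}$ against $\mu$ (the paper does exactly this, invoking the approximation argument of Proposition~\ref{prop:extend trace dec to general elements} for measurability and well-definedness), and (1) $\Rightarrow$ (2) obtained from Proposition~\ref{prop:char for tau fix extension} plus the identification $C^*_e(\Gamma_x) = C^*_r(\Gamma_x)$ plus Hulanicki's theorem. The one place you genuinely diverge is that the paper simply cites \cite[Proposition~2.10]{CN22} for the equality $\|\cdot\|_e = \|\cdot\|_r$ on $\CC[\Gamma_x]$, whereas you reprove it for transformation groupoids: your factorization of $\lambda_y(f_n)$ as a diagonal contraction $M_{\phi_n(\beta y)}$ composed with left convolution by $h \in \CC[\Gamma_x]$ on $\ell^2(\Gamma)$, followed by the decomposition of $\ell^2(\Gamma)$ along right $\Gamma_x$-cosets (on each of which left convolution by $\Gamma_x$ restricts to a copy of $\lambda_{\Gamma_x}$), is a correct and self-contained verification, and it buys independence from that external reference at the cost of being specific to the transformation-groupoid case. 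Two minor remarks: a single cut-off $\phi \in C_c(X)$ with $\phi(x)=1$ and $0 \le \phi \le 1$ already suffices (no shrinking sequence of supports is needed), and the inequality $\|h\|_r \le \|h\|_e$ should be attributed, as you do, to the contractivity of $\vartheta_{x,r}$ applied to any $f$ with $\eta_x(f) = h$.
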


\begin{proof}
First, we show that ``(2) $\Rightarrow$ (1)''. Recall from Lemma \ref{lem:measurable fields for tau fix} (see also Remark \ref{rem:2nd countable}) that 
\begin{equation*}
\tau^\fix_\mu(f) = \int_{\Gz} \tau^\triv_{x}(\eta_x(f)) \d \mu(x),
\end{equation*}
holds for any $f \in C_c(\G)$. By (2), the trivial representation $\tau^\triv_{x}$ on $C_c(\Gamma_x)$ extends to a state $\tau^\triv_{x,r}$ on $C^*_r(\Gamma_x)$ for $\mu$-almost every $x\in X$. By the same argument as in the proof of Proposition \ref{prop:extend trace dec to general elements} without requiring the second-countability of $X \rtimes \Gamma$ (see Remark \ref{rem:2nd countable}), the map
\[
C_0(X) \rtimes_r \Gamma \longrightarrow \CC, \quad  a \mapsto \int_{\Gz} \tau^\triv_{x,r}(\vartheta_{x,r}(a)) \d \mu(x)
\]
is well-defined and extends $\tau^\fix_\mu$. 

Now we show that ``(1) $\Rightarrow$ (2)'' if $\Gamma$ is countable and $X$ is second-countable. 
By Proposition \ref{prop:char for tau fix extension}, $\tau^\fix_\mu$ extends to a tracial state on $C_0(X) \rtimes_r \Gamma$ if and only if for $\mu$-almost every $x\in X$, the trivial representation on $C_c((X \rtimes \Gamma)_x^x)$ extends to a state on $C^*_e((X \rtimes \Gamma)_x^x)$. Note that $(X \rtimes \Gamma)_x^x$ is isomorphic to $\Gamma_x$, and it follows from \cite[Proposition 2.10]{CN22} that $\|\cdot\|_e = \|\cdot\|_r$ on $C_c((X \rtimes \Gamma)_x^x)$. It is also known that the trivial representation on $C_c(\Gamma_x)$ extends to a state on $C^*_r(\Gamma_x)$ if and only if $\Gamma_x$ is an amenable group. Thus, we have concluded the proof.
\end{proof}

%

Combining the previous lemma with Proposition \ref{prop:char for ess free in group action}, we have obtained the main result of this subsection:

\begin{cor}\label{cor:char for tau fix extension group action case}
Let $\Gamma$ be a discrete group acting on a locally compact Hausdorff space $X$ with an invariant probability Radon measure $\mu$. We consider the following conditions:
\begin{enumerate}
 \item The action is essentially free with respect to $\mu$;
 \item Any tracial state $\tau$ on $C_0(X) \rtimes_r \Gamma$ with the associated measure being $\mu$ is canonical;
 \item The isotropy group $\Gamma_x$ is amenable for $\mu$-almost every $x\in X$.
\end{enumerate}
Then (1) $\Rightarrow$ (2) and (2) + (3) $\Rightarrow$ (1). In particular, if the action is essentially free then any tracial state $\tau$ on $C_0(X) \rtimes_r \Gamma$ is canonical.

If additionally $\Gamma$ is countable and $X$ is second-countable, then (1) $\Leftrightarrow$ (2) + (3). 
\end{cor}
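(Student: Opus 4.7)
The plan is to assemble this corollary by combining Proposition~\ref{prop:char for ess free in group action} and Lemma~\ref{lem:char for tau fix extension group action case}, together with Lemma~\ref{lem:char for ess free} to translate essential freeness into a condition on fixed point sets of individual group elements. There is no substantive new argument beyond bookkeeping.

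First, I would observe that the implication (1) $\Rightarrow$ (2) is immediate from Proposition~\ref{prop:char for ess free in group action}: essential freeness with respect to $\mu$ already forces every tracial state on $C_0(X) \rtimes_r \Gamma$ whose associated measure is $\mu$ to be canonical. For (2) + (3) $\Rightarrow$ (1), the key input is the (unconditional) implication (2) $\Rightarrow$ (1) in Lemma~\ref{lem:char for tau fix extension group action case}, which uses amenability of the isotropy groups $\Gamma_x$ for $\mu$-almost every $x$ to extend $\tau^{\fix}_\mu$ to a tracial state on $C_0(X) \rtimes_r \Gamma$. Combined with hypothesis (2) on canonicity of traces with associated measure $\mu$, both requirements in condition (2) of Proposition~\ref{prop:char for ess free in group action} are verified, so the action is essentially free with respect to $\mu$, giving (1).

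For the ``In particular'' clause, I would take an arbitrary tracial state $\tau$ on $C_0(X) \rtimes_r \Gamma$, observe that its associated measure $\mu_\tau$ on $X$ is automatically an invariant probability Radon measure, and apply essential freeness of the action with respect to $\mu_\tau$ using the case (1) $\Rightarrow$ (2) already proved (with $\mu$ replaced by $\mu_\tau$) to conclude that $\tau$ is canonical.

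Finally, under the additional assumptions that $\Gamma$ is countable and $X$ is second countable, only the new implication (1) $\Rightarrow$ (3) needs verification: by Lemma~\ref{lem:char for ess free}, essential freeness with respect to $\mu$ gives $\mu(\fix(\gamma)) = 0$ for every $\gamma \neq 1$, and countability of $\Gamma$ lets me pass to the countable union to obtain $\Gamma_x = \{1\}$ for $\mu$-almost every $x$; the trivial group is amenable. The only point worth flagging is asymmetric use of countability: Lemma~\ref{lem:char for tau fix extension group action case} provides the extension of $\tau^{\fix}_\mu$ from amenability of isotropy groups \emph{without} any countability hypothesis, so the direction (2) + (3) $\Rightarrow$ (1) holds in full generality, whereas the reverse direction (1) $\Rightarrow$ (3) genuinely requires countability of $\Gamma$ in order to collapse the uncountable union $\bigcup_{\gamma \neq 1}\fix(\gamma)$ into a $\mu$-null set. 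There is no real obstacle in this proof: it is a straightforward chase through the already-established equivalences.
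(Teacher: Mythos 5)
Your argument is correct and follows the paper's proof, which is the one-line observation that the corollary is obtained by combining Proposition~\ref{prop:char for ess free in group action} with Lemma~\ref{lem:char for tau fix extension group action case}. The only point where you diverge is the implication (1) $\Rightarrow$ (3) under the countability hypotheses: the paper's route passes through the harder direction of Lemma~\ref{lem:char for tau fix extension group action case} (which uses second countability of $X$ and the exotic-norm machinery of \cite{CN22}), whereas you deduce it directly from Lemma~\ref{lem:char for ess free} by taking the countable union $\bigcup_{\gamma\neq 1}\fix(\gamma)$ --- a more elementary argument that, as you correctly note, only needs countability of $\Gamma$.
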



\begin{rem}\label{rem:C*-simple}
Note that ``(2) $\Rightarrow$ (1)'' in Corollary \ref{cor:char for tau fix extension group action case} does not hold in general. Indeed, it was shown in \cite[Corollary 1.4]{BK18} that if the amenable radical of $\Gamma$ (\emph{i.e.}, the largest amenable normal subgroup of $\Gamma$) is trivial (\emph{e.g.}, $\Gamma$ is $C^*$-simple), then any tracial state on the reduced crossed product is canonical. 
Actually, it was shown in \cite[Corollary 1.12]{Urs21} that any tracial state on the reduced crossed product is canonical if and only if the action of the amenable radical of $\Gamma$ on $X$ is essentially free. On the other hand, trivial actions by non-trivial groups cannot be essentially free.
\end{rem}

Finally, we focus on the case of trivial actions. In this case, each measure on $X$ is invariant. Moreover, we have 
\[
\tau^\fix_\mu(\sum_{i=1}^n f_i \gamma_i) = \sum_{i=1}^n \int_{\fix(\gamma_i)} f_i \d \mu = \sum_{i=1}^n \int_X f_i \d \mu, \quad \text{for} \quad \sum_{i=1}^n f_i \gamma_i \in C_c(\Gamma, C_0(X)).
\]
This shows that on $C_c(\Gamma, C_0(X)) \cong C_c(\Gamma) \otimes C_0(X)$ we have 
\[
\tau^\fix_\mu = \tau^\triv \otimes \tau_\mu, 
\]
where $\tau^\triv$ is the trivial representation of $C_c(\Gamma)$, and $\tau_\mu(f) = \int_X f \d\mu$ for $f \in C_0(X)$. It is worth noticing that $\tau_\mu$ is the same as (\ref{EQ:tau mu}) in the special case when $\G=X$. 



\begin{lem}\label{lem:trivial action extension}
Let $\Gamma$ be a discrete group acting trivially on a compact Hausdorff space $X$ with a probability Radon measure $\mu$. Then the following are equivalent:
\begin{enumerate}
 \item $\tau^\fix_\mu$ can be extended to a tracial state on $C(X) \rtimes_r \Gamma$;
 \item $\Gamma$ is amenable;
 \item $X \rtimes \Gamma$ has the weak containment property.
\end{enumerate}
In particular, if $\Gamma$ is not amenable then $\tau^\fix_\mu$ cannot be extended to a tracial state on $C(X) \rtimes_r \Gamma$.
\end{lem}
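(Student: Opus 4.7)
The plan is to exploit the fact that for a trivial action the crossed products degenerate to tensor products. Concretely, $C_0(X)\rtimes_r \Gamma \cong C_0(X)\otimes_{\min} C^*_r(\Gamma)$ and, by nuclearity of $C_0(X)$, $C_0(X)\rtimes \Gamma \cong C_0(X)\otimes C^*(\Gamma)$. Under these identifications, the formula displayed just above the lemma shows that on the algebraic tensor product $C_0(X)\odot\CC[\Gamma]$ we have $\tau^\fix_\mu = \tau_\mu\otimes\tau_\triv$. With this in hand, everything reduces to Hulanicki's classical theorem: $\tau_\triv$ extends to a state on $C^*_r(\Gamma)$ if and only if $\Gamma$ is amenable, if and only if the canonical quotient $C^*(\Gamma)\to C^*_r(\Gamma)$ is an isomorphism.

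For $(2)\Leftrightarrow(3)$, the weak containment property for $X\rtimes\Gamma$ says the canonical surjection $C_0(X)\otimes C^*(\Gamma)\twoheadrightarrow C_0(X)\otimes_{\min} C^*_r(\Gamma)$ is an isomorphism. Tensoring with any character $\chi\colon C_0(X)\to\CC$ (which exists since $X\neq\emptyset$) forces the canonical map $C^*(\Gamma)\to C^*_r(\Gamma)$ to be an isomorphism, hence $\Gamma$ is amenable; conversely, amenability of $\Gamma$ makes both tensor products coincide.

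For $(2)\Rightarrow(1)$, amenability of $\Gamma$ gives a tracial state on $C^*_r(\Gamma)=C^*(\Gamma)$ extending $\tau_\triv$, so by the universal property of $\otimes_{\min}$ the product $\tau_\mu\otimes\tau_\triv$ defines a tracial state on $C_0(X)\otimes_{\min} C^*_r(\Gamma)\cong C_0(X)\rtimes_r\Gamma$ which, by construction, agrees with $\tau^\fix_\mu$ on $C_c(X\rtimes\Gamma)$.

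The main direction, and the only delicate one, is $(1)\Rightarrow(2)$. The plan is to recover a state on $C^*_r(\Gamma)$ extending $\tau_\triv$ by pinching a putative extension $\tilde\tau$ of $\tau^\fix_\mu$ against a fixed nonnegative $g\in C_c(X)$ with $\int_X g\,\d\mu=1$. Set
\[
\phi(a) := \tilde\tau(g\otimes a), \qquad a\in C^*_r(\Gamma).
\]
Positivity of $\tilde\tau$ combined with the identity $g\otimes a^*a = (g^{1/2}\otimes 1)(1\otimes a^*a)(g^{1/2}\otimes 1)$ in the minimal tensor product makes $\phi$ a positive functional, and evaluation on the canonical unitary generators yields $\phi(\lambda_\gamma) = \tau^\fix_\mu(g\gamma) = \int_{\fix(\gamma)} g\,\d\mu = \int_X g\,\d\mu = 1$, since the triviality of the action means $\fix(\gamma)=X$ for every $\gamma\in\Gamma$. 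Hence $\phi$ is a state on $C^*_r(\Gamma)$ extending the trivial character of $\CC[\Gamma]$, and Hulanicki's theorem then forces $\Gamma$ to be amenable. The last sentence of the lemma is then the contrapositive of $(2)\Rightarrow(1)$. The main obstacle is simply making the identification $C_0(X)\rtimes_r\Gamma\cong C_0(X)\otimes_{\min} C^*_r(\Gamma)$ rigorous in the non-unital setting so that the pinching $a\mapsto\tilde\tau(g\otimes a)$ is well defined; this is standard once the trivial-action formula for the convolution is unwound.
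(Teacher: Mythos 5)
Your proposal is correct and follows essentially the same route as the paper: identify the crossed products of the trivial action with (minimal) tensor products, observe that $\tau^\fix_\mu=\tau_\mu\otimes\tau_\triv$, and reduce everything to Hulanicki's theorem. The only difference is that you spell out the slice-map/pinching step $a\mapsto\tilde\tau(g\otimes a)$ needed for $(1)\Rightarrow(2)$ and for extracting injectivity of $C^*(\Gamma)\to C^*_r(\Gamma)$ from weak containment, which the paper leaves implicit in its ``if and only if''; this is a welcome clarification but not a different argument.
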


\begin{proof}
``(1) $\Leftrightarrow$ (2)'': In this case, we already know that $\tau^\fix_\mu = \tau^\triv \otimes \tau_\mu$ and $C(X) \rtimes_r \Gamma \cong C^*_r(\Gamma) \otimes C(X)$. So $\tau^\fix_\mu$ can be extended to a tracial state on $C(X) \rtimes_r \Gamma$ if and only if $\tau^\triv$ can be extended to a tracial state on $C^*_r(\Gamma)$, which is well-known to be equivalent to amenability of $\Gamma$.

``(2) $\Leftrightarrow$ (3)'': If $\Gamma$ is amenable, then $X \rtimes \Gamma$ is amenable and hence has the weak containment property. Conversely, if we assume that $X \rtimes \Gamma$ has the weak containment property, then the canonical quotient map $C(X)\otimes_{\text{max}}C^*(\Gamma)=C(X) \rtimes \Gamma \to C(X) \rtimes_r \Gamma=C(X)\otimes_{\text{min}}C_r^*(\Gamma)$ is injective, which implies that $C^*(\Gamma) \to C^*_r(\Gamma)$ is injective by a diagram chase. Therefore, $\Gamma$ is amenable. 
\end{proof}

We note that trivial actions by non-trivial groups cannot be essentially free. Hence, Proposition \ref{prop:char for ess free in group action} and Lemma \ref{lem:trivial action extension} together imply the following:

\begin{cor}\label{cor:trivial action}
Let $\Gamma$ be a non-trivial discrete group acting trivially on a compact Hausdorff space $X$ with a probability Radon measure $\mu$. Then at least one of the following conditions \emph{fails}:
\begin{enumerate}
 \item $\Gamma$ is amenable;
 \item Any tracial state $\tau$ on $C(X) \rtimes_r \Gamma$ with the associated measure being $\mu$ is canonical.
\end{enumerate}
\end{cor}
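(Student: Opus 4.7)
The plan is to proceed by contradiction, combining the two results the authors explicitly cite as inputs. Suppose that both conditions (1) and (2) hold simultaneously. Then $\Gamma$ is amenable, so by the implication (2) $\Rightarrow$ (1) of Lemma~\ref{lem:trivial action extension}, the fixed point trace $\tau^\fix_\mu$ extends to a tracial state on $C_0(X) \rtimes_r \Gamma$. Together with the assumption that every tracial state on $C_0(X) \rtimes_r \Gamma$ with associated measure $\mu$ is canonical, this verifies condition (2) of Proposition~\ref{prop:char for ess free in group action}, and therefore the action must be essentially free with respect to $\mu$.

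The final step is to observe that a trivial action of a non-trivial group can never be essentially free. Indeed, if $\gamma \in \Gamma \setminus \{1\}$, then $\fix(\gamma) = X$ since the action is trivial, so $\mu(\fix(\gamma)) = \mu(X) = 1 \neq 0$, contradicting the characterisation of essential freeness in Lemma~\ref{lem:char for ess free}. This contradiction shows that at least one of (1) and (2) must fail, which is the claim.

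No real obstacle is expected here, since all the heavy lifting was done in Proposition~\ref{prop:char for ess free in group action} (characterising essential freeness by canonicity plus extendibility of $\tau^\fix_\mu$) and in Lemma~\ref{lem:trivial action extension} (identifying extendibility with amenability in the trivial-action setting). The corollary is essentially the contrapositive of their combination in this special case, and the only thing to verify by hand is the trivial observation that $\fix(\gamma) = X$ for every $\gamma$ whenever the action is trivial.
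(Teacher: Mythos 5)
Your proof is correct and follows essentially the same route as the paper: the authors likewise combine Lemma~\ref{lem:trivial action extension} (amenability gives extendibility of $\tau^\fix_\mu$ for trivial actions) with Proposition~\ref{prop:char for ess free in group action} and the observation that a trivial action of a non-trivial group is never essentially free. Your explicit verification that $\fix(\gamma)=X$ forces $\mu(\fix(\gamma))=1$ is a detail the paper leaves implicit, but it is the intended argument.
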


\subsection{Tracial ideals and quasidiagonal traces}

Given a tracial state on the reduced groupoid $C^*$-algebra, we can consider its tracial ideal as follows.

\begin{defn}\label{defn:tracial ideal}
Let $\G$ be a locally compact Hausdorff and \'{e}tale groupoid, and $\tau$ be a tracial state on $C^*_r(\G)$. The \emph{tracial ideal associated to $\tau$} is defined to be $I_\tau:=\{a \in C^*_r(\G): \tau(a^*a) = 0\}$, which is a closed two-sided ideal in $C^*_r(\G)$. For an invariant probability Radon measure $\mu$ on $\Gz$, we simply write $I_\mu:=I_{\tau_\mu}$, where $\tau_\mu$ is the tracial state associated to $\mu$ defined in (\ref{EQ:tau mu}). 
\end{defn}

\begin{rem}\label{rem:I tau}

For a tracial state $\tau$, we always have $I_\tau \subseteq \ker (\tau)$. Indeed, let $a$ be any positive element in $I_\tau$. If $a= b^*b$ for some $b\in I_\tau$, then by definition we have $0 = \tau(b^*b) = \tau(a)$. 
\end{rem}

The following proposition is the key observation in this subsection:

\begin{prop}\label{prop:exact sequence}
Let $\G$ be a locally compact Hausdorff and \'{e}tale groupoid, and $\mu$ be an invariant probability Radon measure on $\Gz$. Then we have the following short exact sequence:
\[
0 \longrightarrow I_\mu \longrightarrow C^*_r(\G) \longrightarrow C^*_r(\G_{\supp \mu}) \longrightarrow 0.
\]
\end{prop}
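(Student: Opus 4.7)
The plan is to identify $C^*_r(\G_{\supp\mu})$ with the quotient $C^*_r(\G)/I_\mu$ by explicitly constructing the second map in the sequence as a restriction homomorphism, and then matching its kernel with $I_\mu$.

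First I would verify that $\supp\mu \subseteq \Gz$ is closed and $\G$-invariant. Closedness is automatic; invariance follows because, for any open bisection $B$, the homeomorphism $\alpha_B$ of (\ref{EQ:alpha B}) preserves $\mu$, which forces $\alpha_B(\s(B)\cap \supp\mu) = \r(B)\cap \supp\mu$. Consequently $\G_{\supp\mu} = \s^{-1}(\supp\mu) = \r^{-1}(\supp\mu)$ is a closed étale subgroupoid with unit space $\supp\mu$, so $C^*_r(\G_{\supp\mu})$ is well-defined.

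Next I would construct the surjection $\pi: C^*_r(\G)\to C^*_r(\G_{\supp\mu})$. The restriction $a\mapsto a|_{\G_{\supp\mu}}$ is easily seen to be a $*$-homomorphism $C_c(\G)\to C_c(\G_{\supp\mu})$, using that $\G_u = (\G_{\supp\mu})_u$ for every $u\in\supp\mu$. That same identity shows that, for $u\in\supp\mu$, the regular representations $\lambda_u^{\G}$ on $\ell^2(\G_u)$ and $\lambda_u^{\G_{\supp\mu}}$ on $\ell^2((\G_{\supp\mu})_u)$ agree under restriction, hence $\|a|_{\G_{\supp\mu}}\|_r \leq \|a\|_r$ and the map extends to $\pi$. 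Surjectivity follows by lifting any $\bar a \in C_c(\G_{\supp\mu})$ to $C_c(\G)$ via Tietze extension with a compactly supported cut-off, together with the fact that the image of a $*$-homomorphism between $C^*$-algebras is closed.

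The main step is to show $\ker\pi = I_\mu$. For $u\in\supp\mu$ and $b\in C^*_r(\G)$, the compatibility of the regular representations yields
\[
E(b)(u) = \langle \lambda_u^{\G}(b)\delta_u,\delta_u\rangle = \langle \lambda_u^{\G_{\supp\mu}}(\pi(b))\delta_u,\delta_u\rangle = E_{\supp\mu}(\pi(b))(u),
\]
where $E_{\supp\mu}$ is the canonical faithful conditional expectation on $C^*_r(\G_{\supp\mu})$. Hence $E(b)|_{\supp\mu} = E_{\supp\mu}(\pi(b))$ for all $b\in C^*_r(\G)$. If $a\in I_\mu$, then $\int_{\Gz} E(a^*a)\,d\mu = 0$; since $E(a^*a)$ is a non-negative continuous function and $\mu$ is fully supported on $\supp\mu$, this forces $E(a^*a)|_{\supp\mu}\equiv 0$, whence $E_{\supp\mu}(\pi(a^*a)) = 0$, and faithfulness of $E_{\supp\mu}$ gives $\pi(a)=0$. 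Conversely, if $\pi(a)=0$, the same identity yields $E(a^*a)|_{\supp\mu}=0$, and therefore $\tau_\mu(a^*a) = \int_{\supp\mu} E(a^*a)\,d\mu = 0$.

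The main obstacle is the definition of $\pi$ at the $C^*$-algebra level, since for reduced algebras the full six-term exactness with respect to open invariant subsets is known to fail in general; however, only the surjection $C^*_r(\G)\twoheadrightarrow C^*_r(\G_{\supp\mu})$ is needed here, and its construction via compatibility of regular representations at points of $\supp\mu$ is the crucial ingredient. Once this is in place, faithfulness of the canonical conditional expectation on the reduced groupoid $C^*$-algebra of $\G_{\supp\mu}$ makes the identification of $\ker\pi$ with $I_\mu$ an immediate calculation.
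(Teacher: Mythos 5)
Your proof is correct, but it takes a more self-contained route than the paper's. The paper's own argument simply invokes \cite[Proposition 5.4]{BFPR22}, which supplies, for a closed invariant subset $D\subseteq\Gz$, a short exact sequence $0\to J_D\to C^*_r(\G)\to C^*_r(\G_D)\to 0$ with $J_D=\{a\in C^*_r(\G): E(a^*a)\in C_0(\Gz\setminus D)\}$; the only thing left to check is that $I_\mu=J_{\supp\mu}$, which is done via exactly the observation you also use (a non-negative continuous function integrates to zero against $\mu$ if and only if it vanishes on $\supp\mu$). What you do differently is to reprove the cited structural ingredient from scratch: you verify that $\supp\mu$ is invariant, build the restriction $*$-homomorphism $\pi$, use the compatibility of the regular representations at points of $\supp\mu$ to get contractivity, surjectivity and the identity $E(\cdot)|_{\supp\mu}=E_{\supp\mu}(\pi(\cdot))$, and then identify $\ker\pi$ with $I_\mu$ via faithfulness of $E_{\supp\mu}$. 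All of these steps are sound, and your closing remark is exactly the right one: only the surjection onto $C^*_r(\G_{\supp\mu})$ and the computation of its kernel are needed, not exactness of the ideal sequence attached to the open complement, which can fail for reduced groupoid $C^*$-algebras. Your version buys independence from the external reference at the cost of length; the paper's buys brevity by outsourcing the construction of $\pi$ and the description of its kernel.
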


\begin{proof}
Firstly, we note that $\mu$ being invariant implies that $\supp \mu$ is invariant. According to \cite[Proposition 5.4]{BFPR22}, it suffices to show that
\[
I_\mu = \{a \in C^*_r(\G): E(a^*a) \in C_0(\Gz \setminus \supp \mu)\}. 
\]
Given $a\in C^*_r(\G)$, we note that $E(a^*a)$ is a continuous non-negative function on $\Gz$. Hence, it is clear that $\int_{\Gz} E(a^*a) \d\mu = 0$ if and only if $\{x: E(a^*a)(x) \neq 0\} \subseteq \Gz \setminus \supp \mu$. This concludes the proof.
\end{proof}

\begin{cor}\label{quotientgrou}
Let $\G$ be a locally compact Hausdorff and \'{e}tale groupoid which is essentially free. Then for any tracial state $\tau$ on $C^*_r(\G)$, we have $C^*_r(\G)/I_\tau \cong C^*_r(\G_{\supp \mu_\tau})$.
\end{cor}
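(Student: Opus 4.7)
The plan is to reduce this to Proposition~\ref{prop:exact sequence} by identifying $I_\tau$ with $I_{\mu_\tau}$. The key observation is that the essential freeness hypothesis forces every tracial state on $C^*_r(\G)$ to be the canonical one attached to its associated measure.

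First, I would let $\mu := \mu_\tau$ denote the associated invariant probability Radon measure on $\Gz$. Since $\G$ is essentially free (with respect to every invariant probability Radon measure, and in particular with respect to $\mu$), Proposition~\ref{prop:ess. free implies unique trace} applied to the $C^*$-norm $\|\cdot\|_r$ tells us that $\tau$ is canonical in the sense of Definition~\ref{defn:standard tracial states}. That is, $\tau = \tau_\mu$, where $\tau_\mu$ is the tracial state defined in (\ref{EQ:tau mu}).

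Once this identification is in place, it follows directly from Definition~\ref{defn:tracial ideal} that
\[
I_\tau \;=\; \{a \in C^*_r(\G) : \tau(a^*a) = 0\} \;=\; \{a \in C^*_r(\G) : \tau_\mu(a^*a) = 0\} \;=\; I_{\tau_\mu} \;=\; I_\mu.
\]
Now Proposition~\ref{prop:exact sequence} provides the short exact sequence
\[
0 \longrightarrow I_\mu \longrightarrow C^*_r(\G) \longrightarrow C^*_r(\G_{\supp \mu}) \longrightarrow 0,
\]
and substituting $I_\tau$ for $I_\mu$ yields the desired isomorphism $C^*_r(\G)/I_\tau \cong C^*_r(\G_{\supp \mu_\tau})$.

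There is no real obstacle here: the entire argument is an assembly of two previously established facts, namely that essential freeness upgrades arbitrary tracial states to canonical ones (Proposition~\ref{prop:ess. free implies unique trace}), and that the tracial ideal of the canonical trace $\tau_\mu$ is exactly the kernel of the restriction map onto $C^*_r(\G_{\supp\mu})$ (Proposition~\ref{prop:exact sequence}). The only subtlety worth stating explicitly in the write-up is that the hypothesis ``$\G$ is essentially free'' is applied to the specific invariant measure $\mu_\tau$, which is legitimate because $\mu_\tau$ is indeed invariant as recalled in Section~\ref{ssec:tracial states}.
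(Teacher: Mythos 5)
Your proposal is correct and follows essentially the same route as the paper: the paper's own proof simply cites Theorem~\ref{thm:char for ess free max case} (whose relevant implication is exactly Proposition~\ref{prop:ess. free implies unique trace}) to get that $\tau$ is canonical, identifies $I_\tau$ with $I_{\mu_\tau}$, and then invokes Proposition~\ref{prop:exact sequence}. Your write-up just makes these steps explicit.
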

\begin{proof}
It follows directly from Theorem~\ref{thm:char for ess free max case} and Proposition~\ref{prop:exact sequence}.
\end{proof}

A question of N. Brown asks whether every amenable tracial state is quasidiagonal (see \cite[Question 6.7(2)]{Bro06}) in the following sense:
\begin{defn}
A tracial state $\tau$ on a $C^*$-algebra $A$ is \emph{quasidiagonal} if there is a net of contractive completely positive maps $\phi_i: A \rightarrow M_{k(i)}$ such that
\begin{itemize}
\item $\tau(a)=\lim_i \text{tr}\circ \phi_i (a)$ for all  $a\in A$;

\item $\lim_i ||\phi_i(ab)-\phi_i(a)\phi_i(b)||=0$ for all $a,b\in A$.
\end{itemize}
\end{defn}

 Substantial progress on this question has recently been made in \cite{MR3579134, MR3583354} as follows:
\begin{thm}[see \cite{MR3583354,MR3579134}]\label{TWWquasi}
Any faithful, amenable tracial state on a separable, exact $C^*$-algebra satisfying the UCT is quasidiagonal.
\end{thm}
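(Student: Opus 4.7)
The result is the Tikuisis-White-Winter theorem established in \cite{MR3583354, MR3579134}; I sketch the main line of their argument rather than attempt a self-contained proof, since the theorem is one of the deepest results in modern $C^*$-algebra theory. By definition, quasidiagonality of $\tau$ amounts to producing, for every finite $F \subseteq A$ and every $\varepsilon > 0$, a completely positive contractive map $\phi \colon A \to M_n(\CC)$ such that $\|\phi(ab) - \phi(a)\phi(b)\| < \varepsilon$ and $|\tr_n \circ \phi(a) - \tau(a)| < \varepsilon$ for all $a,b \in F$. The plan is to construct such approximations by first passing to a von Neumann algebra model of $\tau$ and then inverting the construction via $KK$-theory.

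First I would pass to the GNS representation $\pi_\tau \colon A \to B(L^2(A,\tau))$ and consider the finite von Neumann algebra $M := \pi_\tau(A)''$ with its induced faithful normal tracial state. Amenability of $\tau$ means that $\pi_\tau(A)$ is an amenable operator algebra, so Connes' theorem forces $M$ to be hyperfinite, while faithfulness of $\tau$ gives an injective $\ast$-homomorphism $A \hookrightarrow M$. This reduces the task to approximating, in the induced $2$-norm, an embedding of $A$ into the hyperfinite $\mathrm{II}_1$ factor $\mathcal{R}$ by c.p.c.\ maps that factor through matrix algebras.

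The technical heart is the \emph{$2$-coloured quasidiagonality theorem}: one shows that any $\ast$-homomorphism $\varphi \colon A \to \mathcal{R}^\omega$ can, in the sequence-algebra sense, be written as $\phi_0 + \phi_1$, where each $\phi_i$ is a c.p.c.\ order-zero map factoring through some $M_{k(n)}(\CC)$. Such a decomposition is obtained by a $KK$-theoretic lifting argument: the UCT hypothesis produces a $KK$-class realising $\varphi$; separability and exactness make a stable uniqueness theorem of Dadarlat-Eilers type applicable; and the internal structure of $\mathcal{R}$ (McDuff absorption, Connes' characterisation of injectivity) converts $KK$-equivalence into approximate unitary equivalence in the $2$-norm. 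A standard perturbation argument then combines the sum of two order-zero maps into a single asymptotically multiplicative, asymptotically trace-preserving c.p.c.\ map into matrices, which is precisely quasidiagonality of $\tau$.

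The main obstacle is the $2$-coloured step itself: stable uniqueness in this tracial setting must simultaneously control approximate multiplicativity at the $C^*$-level and smallness in the $2$-norm at the von Neumann algebra level, and without passing to two ``colours'' one cannot in general bypass the $K$-theoretic obstructions to a single trace-norm approximation. All three hypotheses -- separability, exactness, and UCT -- enter this step in essential ways, and making them interact correctly is what occupies the bulk of \cite{MR3583354, MR3579134}.
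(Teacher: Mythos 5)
The paper does not prove this statement at all: it is imported verbatim from the cited works of Tikuisis--White--Winter and Gabe and used as a black box in the proof of Theorem~\ref{quasiThm}, so there is no internal argument to compare yours against. Your sketch is an accurate outline of the strategy of those cited proofs (GNS representation, amenability plus Connes' theorem to land in the hyperfinite setting, then the $2$-coloured quasidiagonality theorem via stable uniqueness and the UCT), which is exactly the appropriate level of detail for a result the paper itself only cites.
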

We refer the reader to \cite{Bro06, MR3583354,MR3579134} for relevant definitions and the question of N. Brown. We end this paper by removing the condition ``faithful'' on reduced groupoid $C^*$-algebras. More precisely, we prove the following result:
\begin{thm}\label{quasiThm}
Let $\G$ be a locally compact, Hausdorff, second-countable and étale groupoid such that $C_r^*(\G)$ is an exact $C^*$-algebra and $\G$ satisfies the strong Baum--Connes conjecture with all coefficients in the sense of \cite[Definition~3.6]{BP23}. If $\G$ is also essentially free, then every amenable tracial state on $C_r^*(\G)$ is quasidiagonal.
\end{thm}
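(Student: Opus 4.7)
The strategy is to reduce to the faithful case via the quotient $C^*_r(\G)/I_\tau$. Concretely, let $\tau$ be an amenable tracial state on $C^*_r(\G)$ with associated invariant Radon probability measure $\mu$ on $\Gz$, and set $Y := \supp \mu$, which is a closed invariant subset. By Corollary \ref{quotientgrou} (which crucially uses essential freeness), the quotient map $\pi: C^*_r(\G) \to C^*_r(\G)/I_\tau$ can be identified with $C^*_r(\G) \to C^*_r(\G_Y)$. Since $I_\tau \subseteq \ker \tau$ by Remark \ref{rem:I tau}, the trace $\tau$ descends to a tracial state $\bar\tau$ on $C^*_r(\G_Y)$, and directly from the definition of $I_\tau$ this $\bar\tau$ is faithful.

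I would then verify that the hypotheses of Theorem \ref{TWWquasi} are met for $\bar\tau$ on $C^*_r(\G_Y)$. Separability is inherited from second countability of $\G$; exactness of $C^*_r(\G_Y)$ follows from it being a quotient of the exact $C^*$-algebra $C^*_r(\G)$. For amenability of $\bar\tau$, I would argue via the GNS construction: the weak closure of $\pi_\tau(C^*_r(\G))$ is a hyperfinite finite von Neumann algebra with faithful trace extending $\tau$, and since $\ker \pi_\tau = I_\tau$, this realises $C^*_r(\G_Y)$ as a subalgebra with induced faithful trace $\bar\tau$, which is therefore amenable.

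For the UCT on $C^*_r(\G_Y)$, I would use that the strong Baum-Connes conjecture with coefficients for $\G$ implies the UCT for $C^*_r(\G)$ (the analogue of Tu's theorem in this framework); then I would appeal to the permanence of the strong Baum-Connes conjecture with coefficients under passage to a closed invariant subset $Y$, which gives the same property for $\G_Y$ and hence UCT for $C^*_r(\G_Y)$. Alternatively, one could exploit the short exact sequence
\[
0 \to C^*_r(\G_{\Gz \setminus Y}) \to C^*_r(\G) \to C^*_r(\G_Y) \to 0,
\]
valid since $C^*_r(\G)$ is exact, combined with UCT for the open-invariant restriction, and use two-out-of-three for the UCT class.

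With these hypotheses in place, Theorem \ref{TWWquasi} yields that $\bar\tau$ is quasidiagonal. To lift quasidiagonality from $\bar\tau$ back to $\tau$, I would pre-compose the approximating completely positive maps $\bar\phi_n : C^*_r(\G_Y) \to M_{k_n}$ for $\bar\tau$ with the quotient $\pi$; since $\tau = \bar\tau \circ \pi$, the resulting maps $\bar\phi_n \circ \pi$ are asymptotically multiplicative and their tracial compositions converge pointwise to $\tau$, certifying that $\tau$ itself is quasidiagonal. The main technical obstacle is the permanence of the strong Baum-Connes conjecture (or equivalently of the UCT) from $\G$ to the restriction $\G_Y$; everything else is formal consequences of Corollary \ref{quotientgrou} and standard behaviour of amenable and quasidiagonal traces under quotients.
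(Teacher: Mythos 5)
Your proposal is correct and follows essentially the same route as the paper: pass to the faithful induced trace on $C^*_r(\G)/I_\tau \cong C^*_r(\G_{\supp\mu_\tau})$ via Corollary~\ref{quotientgrou}, check exactness, separability, amenability of the induced trace, and the UCT (which the paper obtains from the strong Baum--Connes hypothesis via the cited results of \cite{BP23}, exactly the permanence you identify as the main technical point), and then apply Theorem~\ref{TWWquasi}.
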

\begin{proof}
Let $\tau$ be any amenable tracial state on $C_r^*(\G)$. Then $\tau$ vanishes on the tracial ideal $I_\tau$ by Remark~\ref{rem:I tau}, and hence there is an induced faithful tracial state $\dot{\tau}$ on $C_r^*(\G)/I_\tau$ such that $\tau=\dot{\tau}\circ p$, where $p:C_r^*(\G)\rightarrow C_r^*(\G)/I_\tau$ is the quotient map. To see that $\tau$ is quasidiagonal, it suffices to show that $\dot{\tau}$ is quasidiagonal.

Since $\tau$ is amenable and $C_r^*(\G)$ is exact, it follows that $\dot{\tau}$ is also amenable (see \cite[Proposition 6.3.5 (4)]{MR2391387}). By Corollary~\ref{quotientgrou}, there is a closed invariant subset $D$ of $\G^{(0)}$ such that $C_r^*(\G)/I_\tau\cong C_r^*(\G_D)$, which is exact as quotients of exact $C^*$-algebras are exact by \cite[Theorem~10.2.5]{MR2391387}. As $\G$ is second-countable and satisfies the strong Baum--Connes conjecture with all coefficients, $C_r^*(\G_D)$ is separable and satisfies the UCT by \cite[Theorem~4.11 and Corollary~4.2]{BP23}. Thus, we conclude the quasidiagonality of $\dot{\tau}$ from Theorem~\ref{TWWquasi}.
\end{proof}
As a consequence, we obtain the following corollary (see a similar result in \cite[Remark~3.13]{BL20}):
\begin{cor}\label{cor:amentrace}
Let $\G$ be a locally compact, Hausdorff, second-countable, amenable and étale groupoid, which is also essentially free. Then every tracial state on $C_r^*(\G)$ is quasidiagonal.
\end{cor}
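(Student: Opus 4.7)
The plan is to derive Corollary \ref{cor:amentrace} as a direct specialization of Theorem \ref{quasiThm}. The task reduces to verifying that the hypotheses ``$C_r^*(\G)$ is exact'' and ``$\G$ satisfies the strong Baum--Connes conjecture with all coefficients'' are automatic when $\G$ is amenable, and that under amenability every tracial state on $C_r^*(\G)$ is amenable in the sense required by Theorem \ref{TWWquasi}.

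First, I would invoke the standard fact that if $\G$ is a locally compact Hausdorff étale amenable groupoid, then $C_r^*(\G)$ is nuclear (this is classical, for instance from the work of Anantharaman-Delaroche and Renault), and in particular exact. Next, since $\G$ is amenable (and second countable), it satisfies the strong Baum--Connes conjecture with all coefficients by Tu's theorem; this is precisely the condition required in \cite[Definition 3.6]{BP23}, and is the content that makes the relevant quotients $C_r^*(\G_D)$ satisfy the UCT as used in the proof of Theorem \ref{quasiThm}.

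Finally, I would observe that on a nuclear $C^*$-algebra every tracial state is automatically amenable: indeed, amenability of a trace is equivalent to a weak factorization property through matrix algebras, which holds trivially when the ambient $C^*$-algebra itself is nuclear (see for instance \cite[Proposition 6.3.5]{MR2391387}). Thus every tracial state $\tau$ on $C_r^*(\G)$ is amenable, and the hypotheses of Theorem \ref{quasiThm} are satisfied, yielding that $\tau$ is quasidiagonal.

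The main obstacle is essentially bookkeeping: one must confirm that the notion of amenability of the groupoid in the sense adopted by the paper matches the one used to deduce nuclearity, the UCT (via strong Baum--Connes), and amenability of traces. None of these are genuinely difficult, but it is worth stating them explicitly so the reader sees that the three hypotheses of Theorem \ref{quasiThm} — exactness of $C_r^*(\G)$, strong Baum--Connes with coefficients, and amenability of the trace in question — all follow from amenability of $\G$ together with second countability, after which Theorem \ref{quasiThm} closes the argument.
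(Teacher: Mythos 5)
Your proposal is correct and follows essentially the same route as the paper's own proof: amenability of $\G$ gives nuclearity (hence exactness) of $C_r^*(\G)$ and the strong Baum--Connes conjecture with all coefficients via \cite[Corollary~3.15]{BP23}, nuclearity gives amenability of every tracial state, and Theorem~\ref{quasiThm} finishes the argument. The only difference is cosmetic (the paper cites \cite[Proposition~6.3.4]{MR2391387} for amenability of traces on nuclear $C^*$-algebras), so no changes are needed.
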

\begin{proof}
If $G$ is amenable, then $\G$ satisfies the strong Baum--Connes conjecture with all coefficients by \cite[Corollary~3.15]{BP23} and $C_r^*(G)$ is nuclear (hence also exact). As every tracial state on a nuclear $C^*$-algebra is amenable \cite[Proposition~6.3.4]{MR2391387}, we complete the proof by Theorem~\ref{quasiThm}.
\end{proof}

\textbf{Acknowledgement}: We wish to thank Johannes Christensen, Sergey Neshveyev and Eduardo Scarparo for helpful discussions on this topic. Finally, we would like to thank the anonymous reviewers for the helpful comments.

\bibliographystyle{plain}

\bibliography{Ghostly_ideals}

\begin{thebibliography}{10}

\bibitem{ABBL20}
Pere Ara, Christian B\"{o}nicke, Joan Bosa, and Kang Li.
\newblock Strict comparison for {$C^*$}-algebras arising from almost finite
  groupoids.
\newblock {\em Banach J. Math. Anal.}, 14(4):1692--1710, 2020.

\bibitem{MR4534134}
Pere Ara, Christian B\"{o}nicke, Joan Bosa, and Kang Li.
\newblock The type semigroup, comparison, and almost finiteness for ample
  groupoids.
\newblock {\em Ergodic Theory Dynam. Systems}, 43(2):361--400, 2023.

\bibitem{BL20}
Christian B\"{o}nicke and Kang Li.
\newblock Ideal structure and pure infiniteness of ample groupoid
  {$C^*$}-algebras.
\newblock {\em Ergodic Theory Dynam. Systems}, 40(1):34--63, 2020.

\bibitem{BCS22}
Kevin~Aguyar Brix, Toke~Meier Carlsen, and Aidan Sims.
\newblock Some results regarding the ideal structure of {$C^*$}-algebras of
  \'etale groupoids.
\newblock {\em J. Lond. Math. Soc. (2)}, 109(3):Paper No. e12870, 20, 2024.

\bibitem{BFPR22}
Jonathan~H. Brown, Adam~H. Fuller, David~R. Pitts, and Sarah~A. Reznikoff.
\newblock Regular ideals, ideal intersections, and quotients.
\newblock {\em Integral Equations Operator Theory}, 96(1):Paper No. 3, 31,
  2024.

\bibitem{Bro06}
Nathanial~P. Brown.
\newblock Invariant means and finite representation theory of {$C^*$}-algebras.
\newblock {\em Mem. Amer. Math. Soc.}, 184(865):viii+105, 2006.

\bibitem{MR2391387}
Nathanial~P. Brown and Narutaka Ozawa.
\newblock {\em {$C^*$}-algebras and finite-dimensional approximations},
  volume~88 of {\em Graduate Studies in Mathematics}.
\newblock American Mathematical Society, Providence, RI, 2008.

\bibitem{BK18}
Rasmus~S. Bryder and Matthew Kennedy.
\newblock Reduced twisted crossed products over {C*}-simple groups.
\newblock {\em International Mathematics Research Notices}, 2018(6):1638--1655,
  2018.

\bibitem{BP23}
Christian Bönicke and Valerio Proietti.
\newblock Categorical approach to the {B}aum-{C}onnes conjecture for étale
  groupoids.
\newblock {\em arXiv:2202.08067, to appear in J. Inst. Math. Jussieu}, 2023.

\bibitem{CLS21}
Jorge Castillejos, Kang Li, and G\'{a}bor Szab\'{o}.
\newblock On tracial {$\mathcal Z$}-stability of simple non-unital
  {$C^*$}-algebras.
\newblock {\em arXiv:2108.08742v3, to appear in Canadian Journal of
  Mathematics}, 2021.

\bibitem{CN22}
Johannes Christensen and Sergey Neshveyev.
\newblock ({N}on)exotic completions of the group algebras of isotropy groups.
\newblock {\em Int. Math. Res. Not. IMRN}, (19):15155--15186, 2022.

\bibitem{MR3732883}
George~A. Elliott and Zhuang Niu.
\newblock The {C{$^*$}}-algebra of a minimal homeomorphism of zero mean
  dimension.
\newblock {\em Duke Math. J.}, 166(18):3569--3594, 2017.

\bibitem{MR3579134}
James Gabe.
\newblock Quasidiagonal traces on exact {$C^*$}-algebras.
\newblock {\em J. Funct. Anal.}, 272(3):1104--1120, 2017.

\bibitem{KTT90}
Shinz{\^o} Kawamura, Hideo Takemoto, and Jun Tomiyama.
\newblock State extensions in transformation group {$C^*$}-algebras.
\newblock {\em Acta Sci. Math. (Szeged)}, 54(1-2):191--200, 1990.

\bibitem{MR4167017}
David Kerr.
\newblock Dimension, comparison, and almost finiteness.
\newblock {\em J. Eur. Math. Soc. (JEMS)}, 22(11):3697--3745, 2020.

\bibitem{KN21}
David Kerr and Petr Naryshkin.
\newblock Elementary amenability and almost finiteness.
\newblock {\em arXiv:2107.05273}, 2021.

\bibitem{MR4066584}
David Kerr and G\'{a}bor Szab\'{o}.
\newblock Almost finiteness and the small boundary property.
\newblock {\em Comm. Math. Phys.}, 374(1):1--31, 2020.

\bibitem{LLN09}
Marcelo Laca, Nadia~S. Larsen, and Sergey Neshveyev.
\newblock On {B}ost-{C}onnes types systems for number fields.
\newblock {\em J. Number Theory}, 129(2):325--338, 2009.

\bibitem{LM23}
Kang Li and Xin Ma.
\newblock Nonfree almost finite actions for locally finite-by-virtually
  {{${\Bbb {Z}}$}} groups.
\newblock {\em J. Lond. Math. Soc. (2)}, 110(1):Paper No. e12959, 2024.

\bibitem{MR4054809}
Xin Li.
\newblock Every classifiable simple {$\rm C^*$}-algebra has a {C}artan
  subalgebra.
\newblock {\em Invent. Math.}, 219(2):653--699, 2020.

\bibitem{LR19}
Xin Li and Jean Renault.
\newblock Cartan subalgebras in {${\rm C}^*$}-algebras. {E}xistence and
  uniqueness.
\newblock {\em Trans. Amer. Math. Soc.}, 372(3):1985--2010, 2019.

\bibitem{MR3906305}
Xin Ma.
\newblock Invariant ergodic measures and the classification of crossed product
  {$C^*$}-algebras.
\newblock {\em J. Funct. Anal.}, 276(4):1276--1293, 2019.

\bibitem{MR2876963}
Hiroki Matui.
\newblock Homology and topological full groups of \'{e}tale groupoids on
  totally disconnected spaces.
\newblock {\em Proc. Lond. Math. Soc. (3)}, 104(1):27--56, 2012.

\bibitem{Mur90}
Gerard~J. Murphy.
\newblock {\em {$C^*$}-algebras and operator theory}.
\newblock Academic Press, Inc., Boston, MA, 1990.

\bibitem{MR4438064}
Petr Naryshkin.
\newblock Polynomial growth, comparison, and the small boundary property.
\newblock {\em Adv. Math.}, 406:Paper No. 108519, 9, 2022.

\bibitem{N23}
Petr Naryshkin.
\newblock Group extensions preserve almost finiteness.
\newblock {\em J. Funct. Anal.}, 286(7):Paper No. 110348, 8, 2024.

\bibitem{Nes13}
Sergey Neshveyev.
\newblock K{MS} states on the {$C^\ast$}-algebras of non-principal groupoids.
\newblock {\em J. Operator Theory}, 70(2):513--530, 2013.

\bibitem{NS22}
Sergey Neshveyev and Nicolai Stammeier.
\newblock The groupoid approach to equilibrium states on right {LCM} semigroup
  {$C^*$}-algebras.
\newblock {\em J. Lond. Math. Soc. (2)}, 105(1):220--250, 2022.

\bibitem{Niu19}
Zhuang Niu.
\newblock Radius of comparison and mean topological dimension: {$\mathbb
  Z^d$}-actions.
\newblock {\em arXiv:1906.09171, to appear in Canadian Journal of Mathematics},
  2019.

\bibitem{MR4315611}
Zhuang Niu.
\newblock {$\mathbb{Z}$}-stability of transformation group {${\rm
  C}^*$}-algebras.
\newblock {\em Trans. Amer. Math. Soc.}, 374(10):7525--7551, 2021.

\bibitem{Ren80}
Jean Renault.
\newblock {\em A groupoid approach to {$C^{*}$}-algebras}, volume 793 of {\em
  Lecture Notes in Mathematics}.
\newblock Springer, Berlin, 1980.

\bibitem{MR4321941}
Aidan Sims, G\'{a}bor Szab\'{o}, and Dana Williams.
\newblock {\em Operator algebras and dynamics: groupoids, crossed products, and
  {R}okhlin dimension}.
\newblock Advanced Courses in Mathematics. CRM Barcelona.
  Birkh\"{a}user/Springer, Cham, [2020] \copyright 2020.
\newblock Lecture notes from the Advanced Course held at Centre de Recerca
  Matem\`atica (CRM) Barcelona, March 13--17, 2017.

\bibitem{MR3583354}
Aaron Tikuisis, Stuart White, and Wilhelm Winter.
\newblock Quasidiagonality of nuclear {$C^\ast$}-algebras.
\newblock {\em Ann. of Math. (2)}, 185(1):229--284, 2017.

\bibitem{Urs21}
Dan Ursu.
\newblock Characterizing traces on crossed products of noncommutative
  {$C^*$}-algebras.
\newblock {\em Advances in Mathematics}, 391:107955, 2021.

\bibitem{MR3549528}
Rufus Willett.
\newblock A non-amenable groupoid whose maximal and reduced {$C^*$}-algebras
  are the same.
\newblock {\em M\"{u}nster J. Math.}, 8(1):241--252, 2015.

\bibitem{MR3966830}
Wilhelm Winter.
\newblock Structure of nuclear {$\rm C^*$}-algebras: from quasidiagonality to
  classification and back again.
\newblock In {\em Proceedings of the {I}nternational {C}ongress of
  {M}athematicians---{R}io de {J}aneiro 2018. {V}ol. {III}. {I}nvited
  lectures}, pages 1801--1823. World Sci. Publ., Hackensack, NJ, 2018.

\end{thebibliography}


\end{document}